\documentclass[11pt]{article}
\usepackage{graphicx} % Required for inserting images
\usepackage{xcolor}
\usepackage{amsthm}
\usepackage{amsmath}
\usepackage{mathtools}
\usepackage{slashbox}
\usepackage{amssymb}% http://ctan.org/pkg/amssymb
\usepackage{pifont}% http://ctan.org/pkg/pifont
\usepackage{verbatim}
\usepackage{faktor}
\usepackage{caption}
\usepackage{subcaption}
\usepackage{geometry}

\newcommand{\isEquivTo}[1]{\underset{#1}{\sim}}
\newcommand{\littleo}[1]{\underset{#1}{o}}

\makeatletter
\newtheorem*{rep@theorem}{\rep@title}
\newcommand{\newreptheorem}[2]{%
\newenvironment{rep#1}[1]{%
 \def\rep@title{#2 \ref{##1}}%
 \begin{rep@theorem}}%
 {\end{rep@theorem}}}
\makeatother

\newtheorem{thm}{Theorem}[section]
\newtheorem{lemma}[thm]{Lemma}
\newtheorem{cor}[thm]{Corollary}
\newtheorem{prop}[thm]{Propostion}
\newreptheorem{prop}{Propostion}
\newtheorem*{rmk}{Remark}
\newtheorem*{definition}{Definition}

\title{Rotation number for a homogeneous vector field in $\mathbf{C}^{2}$}
\author{Adrien Kachkachi}
\date{May 2026}

\begin{document}

\maketitle

\begin{abstract}
    In this paper, we use affine surfaces to describe completely the real-time trajectories of a homogeneous vector field in $\mathbf{C}^{2}$. We prove the existence of a continuous ``rotation number" in $\mathbf{C}^{2}$ which is constant along real-time trajectories. A description of the trajectories depending on their rotation number is given.
\end{abstract}

\section{Introduction}

\label{sec:Intro}

\subsection{Generalities on homogeneous vector fields}

In this paper, we will be concerned about the real-time dynamics of vector fields $\mathbf{v}=\left(v_{1},v_{2}\right)$ in $\mathbf{C}^{2}$, where $v_{1}$ and $v_{2}$ are homogeneous polynomials of degree $k\geq2$. We say that such vector fields have degree $k$. The solutions of the induced ordinary differential equation
\begin{equation}
\label{eq:eq_diff}
    \gamma'(t)=\mathbf{v}\circ\gamma(t)
\end{equation}
have been extensively studied as functions defined on a neighborhood of 0 in $\mathbf{C}$, but our underlying motivation is the study of the germs tangent to the identity of the form $$F\left(x,y\right)=\left(
\begin{array}{c}
    x \\
    y
\end{array}
\right)+
\left(
\begin{array}{c}
    v_{1}\left(x,y\right) \\
    v_{2}\left(x,y\right)
\end{array}
\right)+\text{higher order terms}.$$ Even though the study of such germs is out of the scope of this paper, the solutions of (\ref{eq:eq_diff}) defined in a neighborhood of 0 in $\mathbf{R}$ can provide us with some intuition about the dynamics of the germs, and in particular of the time-one flows associated to the vector fields.

\begin{definition}
    A maximal solution of (\ref{eq:eq_diff}) defined on a sub-interval of $\mathbf{R}$ is called a (real-time) trajectory.
\end{definition}

Let $\mathbf{v}$ be a homogeneous vector field of degree $k$ in $\mathbf{C}^{2}$. The polynomial $P\left(x,y\right):=x\,v_{2}\left(x,y\right)-y\,v_{1}\left(x,y\right)$ is either identically zero or homogeneous of degree $k+1$. In the former case, $\mathbf{v}$ is called \textit{dicritical}. We will assume $\mathbf{v}$ is non-dicritical, hence the set of solutions to the equation $P\left(x,y\right)=0$ consists of a union of $k+1$ complex lines $L_{0},\ldots,L_{k}$ counted with multiplicities. These lines are referred to as \textit{characteristic directions}. Their union $\mathcal{C}:=L_{0}\cup\ldots\cup L_{k}$ is called the characteristic cone. Let $\pi:\mathbf{C}^{2}\setminus\{(0,0)\}\rightarrow\mathbf{CP}^{1}$ defined by $\pi(x,y):=[x:y]$, the complex line passing through $(x,y)$ in $\mathbf{C}^{2}$. Let also $h:\mathbf{CP}^{1}\rightarrow\widehat{\mathbf{C}}$ be the isomorphism between $\mathbf{CP}^{1}$ and the Riemann sphere defined by $h([x:y]):=x/y$ if $y\neq0$, and $h([1:0]):=\infty$. The map $g:=h\circ\pi:\mathbf{C}^{2}\setminus\{(0,0)\}\rightarrow\widehat{\mathbf{C}}$ maps the characteristic directions to $k+1$ points $\left\{z_{0},\ldots,z_{k}\right\}$. The following theorem, due to Abate and Tovena, sheds light on the connection between homogeneous vector fields of $\mathbf{C}^{2}$ and (meromorphic) affine surfaces (see the definition below).

\begin{thm}[\cite{AbateTovena}]
\label{thm:AbateTovena}
    There exists a meromorphic affine structure on $\widehat{\mathbf{C}}$, denoted $\mathcal{S}_{\mathbf{v}}$, with singularities at $z_{0},\ldots,z_{k}$, such that $g_{|\mathbf{C}^{2}\setminus\mathcal{C}}$ sends real-time trajectories for $\mathbf{v}$ to geodesics of $\mathcal{S}_{\mathbf{v}}$.
\end{thm}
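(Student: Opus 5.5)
The plan is to compute the projected dynamics explicitly in the affine chart $z=x/y$ and read off an affine structure in which the projected curves have constant velocity. After a linear change of coordinates we may assume $[1:0]$ is not a characteristic direction, so $y\equiv0$ is not invariant. The degenerate case where a trajectory meets $\{y=0\}$ only costs a change of chart.

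First, write $v_j(x,y)=y^k v_j(z,1)$ with $z=x/y$. Along a trajectory $\gamma=(x(t),y(t))$ with $y\neq0$ we get
\[
  \dot z=\frac{\dot x\,y-x\,\dot y}{y^2}=y^{k-1}\bigl(v_1(z,1)-z\,v_2(z,1)\bigr)=-y^{k-1}p(z),
\]
where $p(z):=P(z,1)$ is a polynomial whose roots are the finite $z_j$. In the same way, $\dot y=y^k v_2(z,1)$. The quantity $y$ is therefore not a function of $z$ alone. However, we can eliminate time: along the trajectory,
\[
  \frac{dy}{dz}=-\,y\,\frac{v_2(z,1)}{p(z)},\qquad\text{so}\qquad y^{k-1}=C\exp\Bigl(-(k-1)\int \frac{v_2(z,1)}{p(z)}\,dz\Bigr)
\]
for some constant $C\in\mathbf{C}^*$ depending on the trajectory. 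Put $\omega(z):=\exp\bigl((k-1)\int v_2/p\bigr)\,dz/p(z)$. This is a multivalued holomorphic $1$-form on $\widehat{\mathbf{C}}\setminus\{z_j\}$. It is well defined up to multiplicative constants, since analytic continuation multiplies the exponential factor by a constant. Along the trajectory $\dot z=-C\,e^{-(k-1)\int v_2/p}\,p(z)$, so
\[
  \omega(\dot z)=-C.
\]
Thus in the local developing coordinate $w=\int\omega$ the projected curve satisfies $\dot w=\text{const}$. It is therefore a real straight line parametrized affinely, that is, a geodesic.

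Next, define $\mathcal S_{\mathbf v}$ by taking as charts the local primitives $w=\int\omega$ on $\widehat{\mathbf{C}}\setminus\{z_0,\dots,z_k\}$. Two determinations of $w$ differ by $w\mapsto aw+b$, because both the constant of integration in the exponent and the choice of branch rescale $\omega$ by a constant. The transition maps are therefore affine, and this gives a flat affine structure. Geodesics are, by definition, images of real lines under charts. The computation above shows that $g\circ\gamma$ is such a curve. Conversely, changing $C$ changes the direction and speed of the line, which is consistent with the statement as given.

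The main obstacle is to check that the structure is meromorphic at the points $z_j$, including $\infty$. Near a root $z_j$ of multiplicity $m$, $v_2/p$ has a pole, and the residue $\rho_j$ (for $m=1$) gives $\exp((k-1)\int v_2/p)\sim (z-z_j)^{(k-1)\rho_j}$. Hence $\omega\sim (z-z_j)^{(k-1)\rho_j-m}h(z)\,dz$ with $h$ holomorphic. The associated connection form $\eta=d\log(\omega/dz)=\bigl((k-1)v_2/p - p'/p\bigr)dz$ is a single-valued meromorphic $1$-form, with poles only at the $z_j$. Describing the structure by $\eta$ (an affine structure is exactly a meromorphic connection $\nabla\partial_z=\eta\otimes\partial_z$) shows that it is meromorphic with singularities contained in $\{z_j\}$. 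At $\infty$ one does the same check in the coordinate $1/z$. There the degree count ($\deg p=k+1$ when $\infty$ is not characteristic) shows $\eta$ is holomorphic, so $\infty$ is a regular point, as the genericity assumption requires. Finally, one must verify that the geodesic equation for $\nabla$, namely $\ddot z+\eta(z)\dot z^2=0$, is exactly satisfied by $z(t)$. This follows by differentiating $\dot z=-y^{k-1}p(z)$ and substituting $\dot y$.
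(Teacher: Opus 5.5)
Your argument is correct, and the computations check out, including the regularity of $\infty$ and the verification of $\ddot z+\eta\,\dot z^{2}=0$. The paper gives no proof of its own (it quotes Abate--Tovena), but the two facts it records from Buff--Raissy are exactly what you derive: $\dot z=-y^{k-1}p(z)$ is (\ref{eq:egal_gamma2}), and your connection form $\eta$ equals $-\mathcal{N}_{z}$ because $z-F(z)=p(z)/v_{2}(z,1)$.

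One correction of wording: in this paper ``flat'' means $\lvert a\rvert=1$, which $\mathcal{S}_{\mathbf{v}}$ generally is not. What you mean is only that the transition maps are complex affine, i.e.\ the connection is flat.
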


Conversely, to any meromorphic affine structure on $\widehat{\mathbf{C}}$ we can associate a homogeneous vector field on $\mathbf{C}^{2}$ that induces the original meromorphic affine structure. We shall explain how to go from the vector field to the associated affine surface, and vice versa.

\subsection{Generalities on affine surfaces}

In this section we recall basic notions about affine surfaces. The interested reader will find useful references in \cite{DuryevFougeronGhazouani}.

\begin{definition}
    An affine surface is a topological surface $S$ equipped with an atlas whose changes of charts are required to be complex affine maps $z\mapsto az+b$, $a\in\mathbf{C}^{*}$, $b\in\mathbf{C}$. The surface $S$ is said to be a dilation surface if $a\in\mathbf{R}_{+}$, and flat if $\lvert a\rvert=1$. A flat dilation surface is called ``translation surface".
\end{definition}

An affine surface can be fairly well understood through the concept of \textit{non-linearity} (see, for instance, the exposition made in \cite{BuffRaissy}). Roughly, the non-linearity of a holomorphic function between two affine surfaces measures how far the given function is from being affine. More precisely, let $\mathcal{S}_{1}$ and $\mathcal{S}_{2}$ be two affine structures, and $f:\mathcal{S}_{1}\rightarrow \mathcal{S}_{2}$ be a holomorphic function with non-vanishing differential. Let also $\varphi_{1}$ and $\varphi_{2}$ be affine charts around $s\in\mathcal{S}_{1}$ and $f\left(s\right)\in\mathcal{S}_{2}$, respectively. Then, the non-linearity of $f$, denoted $\mathcal{N}_{f}$, is the holomorphic 1-form defined near $s$ by
\[
\mathcal{N}_{f}:=\text{dlog}\left(\frac{\text{d}\left(\varphi_{2}\circ f\right)}{\text{d}\varphi_{1}}\right),
\]
where $\text{dlog}\left(g\right):=\text{d}g/g$ for any holomorphic function $g$ with non-vanishing differential. The following Proposition gives the expression of the non-linearity of a composition of two functions. Its proof is elementary and can be found in \cite{BuffRaissy}.

\begin{prop}
\label{prop:chain}
    Let $\mathcal{S}_{1}$, $\mathcal{S}_{2}$ and $\mathcal{S}_{3}$ be two affine structures, and $f:\mathcal{S}_{1}\rightarrow \mathcal{S}_{2}$ and $g:\mathcal{S}_{2}\rightarrow \mathcal{S}_{3}$ be two holomorphic functions with non-vanishing differentials. Then, $\mathcal{N}_{g\circ f}=\mathcal{N}_{f}+f^{*}\mathcal{N}_{g}$.
\end{prop}

Now, let $\mathbb{S}$ be a compact Riemann surface and $\Sigma\subset\mathbb{S}$ be a finite set. Let also $\mathcal{S}$ be an affine structure on $\mathbb{S}\setminus\Sigma$. Note that if $\varphi_{1}$ and $\varphi_{2}$ are two holomorphic coordinates defined on some open subset $U\subset\mathbb{S}$, then the non-linearities of $\varphi_{1}:\mathcal{S}\cap U\rightarrow\mathbf{C}$ and $\varphi_{2}:\mathcal{S}\cap U\rightarrow\mathbf{C}$ differ only by a holomorphic 1-form. Therefore, the polar parts of $\mathcal{N}_{\varphi_{1}}$ and $\mathcal{N}_{\varphi_{2}}$ coincide along $U\cap\Sigma$. Any time we are concerned only by the polar part of the non-linearities, we will, through misuse of language, talk about ``the" non-linearity.

\begin{definition}
    A meromorphic affine structure on a compact Riemann surface $\mathbb{S}$ is an affine structure on $\mathbb{S}\setminus\Sigma$, where $\Sigma$ is a finite set of points; moreover, we require that the non-linearity of any holomorphic chart $\varphi:U\rightarrow\mathbf{C}$, where $U\subset\mathbb{S}$, has at worst poles at the points of $U\cap S$ (called singularities).
\end{definition}

The following Proposition shows that the ``type" of the surface can be recovered from the residues of the non-linearity.

\begin{prop}
    Let $\mathcal{S}$ be a meromorphic affine structure. Then,
    \begin{enumerate}
        \item $\mathcal{S}$ is a dilation surface if and only if for any holomorphic chart $z$, the real part of all the residues of $\mathcal{N}_{z}$ are integers;
        \item $\mathcal{S}$ is flat if and only if for any holomorphic chart $z$, the real part of all the residues of $\mathcal{N}_{z}$ are real.
    \end{enumerate}
\end{prop}

\begin{definition}
    Let $\mathbb{S}_{1},\mathbb{S}_{2}$ be two compact Riemann surfaces and $\mathcal{S}_{1}$ and $\mathcal{S}_{2}$ two meromorphic affine structures on $\mathbb{S}_{1}$ and $\mathbb{S}_{2}$ respectively. We say $\mathcal{S}_{1}$ and $\mathcal{S}_{2}$ are equivalent if there exists a biholomorphism $f:\mathbb{S}_{1}\rightarrow\mathbb{S}_{2}$ such that $f:\mathcal{S}_{1}\rightarrow\mathcal{S}_{2}$ is affine.
\end{definition}

\begin{lemma}
\label{lem:aff_parti_pol}
    Let $\mathbb{S}_{1},\mathbb{S}_{2},\mathcal{S}_{1}$ and $\mathcal{S}_{2}$ be as above, and let $f:\mathbb{S}_{1}\rightarrow\mathbb{S}_{2}$ be a biholomorphism. Then $\mathcal{N}_{f}$ is a holomorphic 1-form on $\mathbb{S}_{1}$ if and only if for any $z\in\mathbb{S}_{1}$ and any holomorphic charts $\varphi_{1},\varphi_{2}$ around $z$ and $f(z)$ respectively, the polar part of $\mathcal{N}_{\varphi_{1}}$ is the pull-back by $f$ of the polar part of $\mathcal{N}_{\varphi_{2}}$.
\end{lemma}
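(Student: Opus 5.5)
The plan is to reduce everything to a local computation with Proposition~\ref{prop:chain}. Fix $z\in\mathbb{S}_{1}$ and holomorphic charts $\varphi_{1}$ around $z$ and $\varphi_{2}$ around $f(z)$. Here $\varphi_{1},\varphi_{2}$ are regarded as maps from $\mathcal{S}_{1}$ (resp. $\mathcal{S}_{2}$) to $\mathbf{C}$ with its standard affine structure. Near $z$ we have the identity $\varphi_{2}\circ f=(\varphi_{2}\circ f\circ\varphi_{1}^{-1})\circ\varphi_{1}$. Applying Proposition~\ref{prop:chain} to both sides gives
\[
\mathcal{N}_{f}+f^{*}\mathcal{N}_{\varphi_{2}}=\mathcal{N}_{\varphi_{2}\circ f}=\mathcal{N}_{\varphi_{1}}+\varphi_{1}^{*}\mathcal{N}_{\psi},
\]
where $\psi:=\varphi_{2}\circ f\circ\varphi_{1}^{-1}$. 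The map $\psi$ is a biholomorphism between open subsets of $\mathbf{C}$, both carrying the standard affine structure, so $\mathcal{N}_{\psi}=\text{dlog}(\psi')$. Since $\psi'$ does not vanish, this is a holomorphic 1-form, and so is its pull-back by $\varphi_{1}$. Hence, near $z$,
\[
\mathcal{N}_{f}=\mathcal{N}_{\varphi_{1}}-f^{*}\mathcal{N}_{\varphi_{2}}+(\text{holomorphic 1-form}).
\]

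This identity holds on a punctured neighbourhood of $z$, where all the structures are defined. Since $f$ is a biholomorphism, $f^{*}$ sends the polar part of $\mathcal{N}_{\varphi_{2}}$ at $f(z)$ to the polar part of $f^{*}\mathcal{N}_{\varphi_{2}}$ at $z$; in local coordinates this is just a holomorphic change of variable applied to a Laurent expansion. Taking polar parts at $z$ then yields
\[
\text{PP}_{z}(\mathcal{N}_{f})=\text{PP}_{z}(\mathcal{N}_{\varphi_{1}})-f^{*}\,\text{PP}_{f(z)}(\mathcal{N}_{\varphi_{2}}).
\]

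Both implications follow from this. $\mathcal{N}_{f}$ is meromorphic on $\mathbb{S}_{1}$ with at worst poles on $\Sigma_{1}\cup f^{-1}(\Sigma_{2})$. It is therefore holomorphic if and only if its polar part vanishes at every point, which by the displayed identity is exactly the stated compatibility of polar parts, for every $z$ and every choice of charts. The choice of charts is irrelevant, since changing charts alters the non-linearities only by holomorphic forms, as noted before the definition of meromorphic affine structures.

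The main point requiring care is the bookkeeping of where each object is defined. In particular, $\mathcal{N}_{f}$ only makes sense away from $\Sigma_{1}\cup f^{-1}(\Sigma_{2})$, so \emph{holomorphic on $\mathbb{S}_{1}$} means that it extends holomorphically across those points. I would handle this by first checking that the displayed identity makes $\mathcal{N}_{f}$ meromorphic at these points, with the polar part computed as above. Then removability at isolated singularities with zero polar part gives the extension.
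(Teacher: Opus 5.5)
Your proposal is correct and follows the same route as the paper: both combine the chain rule $\mathcal{N}_{\varphi_{2}\circ f}=\mathcal{N}_{f}+f^{*}\mathcal{N}_{\varphi_{2}}$ with the fact that $\mathcal{N}_{\varphi_{2}\circ f}-\mathcal{N}_{\varphi_{1}}$ is holomorphic near $z$, and then compare polar parts. The only difference is that you justify this fact explicitly, by applying the chain rule a second time to $\psi\circ\varphi_{1}$ with $\mathcal{N}_{\psi}=\text{dlog}(\psi')$ holomorphic, whereas the paper just invokes its earlier remark that the non-linearities of two holomorphic coordinates differ by a holomorphic 1-form.
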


\begin{proof}
    From what precedes, $\mathcal{N}_{\varphi_{2}\circ f}-\mathcal{N}_{\varphi_{1}}$ is a holomorphic 1-form around $z$. Moreover, by Lemma \ref{prop:chain}, $\mathcal{N}_{\varphi_{2}\circ f}=\mathcal{N}_{f}+f^{*}\mathcal{N}_{\varphi_{2}}$. Therefore $f^{*}\mathcal{N}_{\varphi_{2}}-\mathcal{N}_{\varphi_{1}}$ is holomorphic if and only if $\mathcal{N}_{f}$ is holomorphic around $z$. 
\end{proof}

This Lemma is of particular interest when $\mathbb{S}_{1}$ and $\mathbb{S}_{2}$ are the Riemann sphere, because there the only global holomorphic 1-form is the zero 1-form. Especially, since a Möbius transform is determined by the image of three points, a meromorphic affine structure on $\widehat{\mathbf{C}}$ with at most three singularities is in some sense ``determined" by the polar parts of the non-linearities around the singularities.

\subsection{From polygons to meromorphic affine structures on $\widehat{\mathbf{C}}$}

The behavior of the geodesic flow on the Riemann sphere is not well understood for a general meromorphic affine structure. That is why, in this paper, we focus on a specific type of affine structure. More precisely, let $\mathcal{Q}$ be a quadrilateral with vertices $A$, $B$, $C$ and $D$ as in Figure \ref{fig:Quadrilatere}, and let $\mathbb{S}$ be the topological sphere obtained by gluing $\left[A,B\right]$ with $\left[B,C\right]$ and $\left[C,D\right]$ with $\left[A,D\right]$ by affine maps preserving the orientation and fixing $B$ and $D$ respectively. These gluings induce a meromorphic affine structure $\mathcal{S}$ on $\mathbb{S}$, while the standard complex structure on $\mathbb{S}$ identifies it with $\widehat{\mathbf{C}}$. It is clear that the singularities of $\mathcal{S}$ correspond to the vertices of the quadrilateral (note that $A$ and $C$ are identified). These correspond to three points in $\widehat{\mathbf{C}}$, that we may choose so that $A$ (and $C$), $B$ and $D$ correspond to $0,1$, and $\infty$ respectively. In the following we denote by $\Phi:\widehat{\mathbf{C}}\longrightarrow\mathcal{S}$ the identity map on $\mathbb{S}$ seen as a map between meromorphic affine structures.

\begin{figure}[ht]
\centering
\includegraphics[scale=0.25]{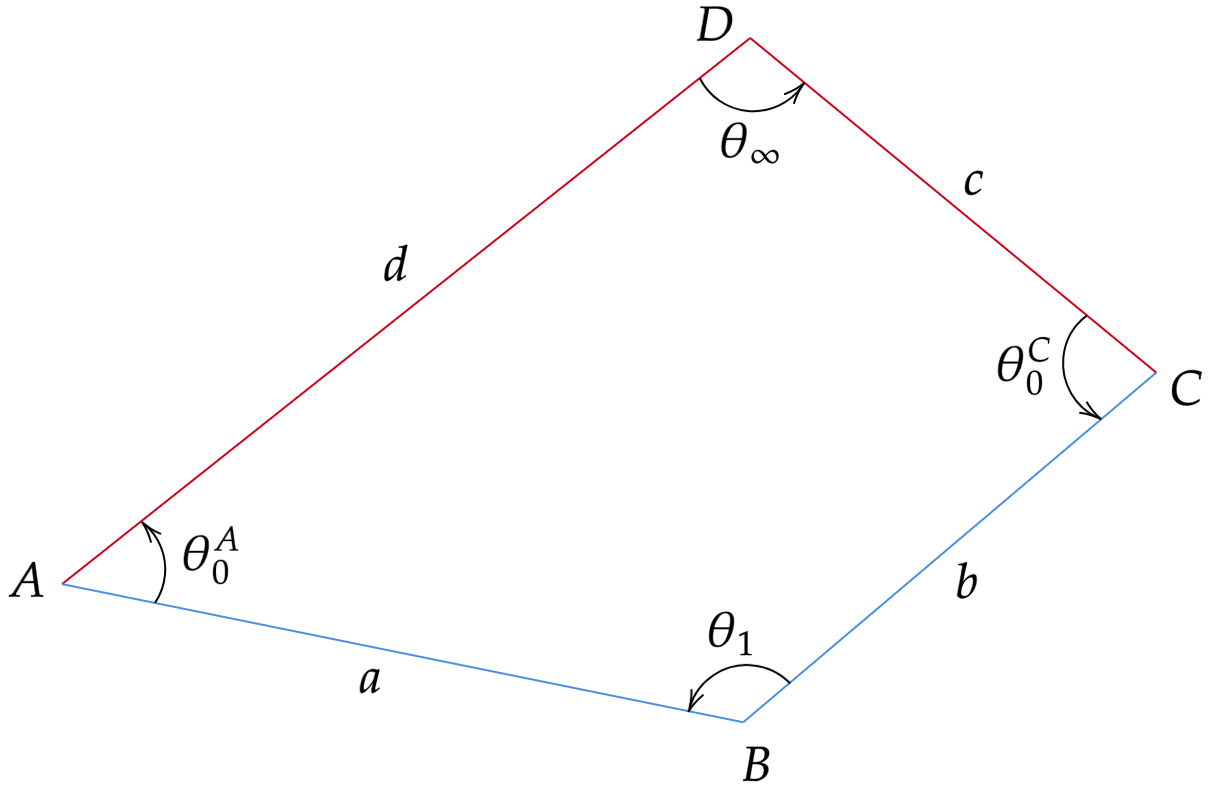}
\caption{\centering An affine surface obtained by gluing edges in a quadrilateral. The angles are measured in radians, and the letters $a,b,c$ and $d$ stand for the lengths of the segments}
\label{fig:Quadrilatere}
\end{figure}

Each singularity of $\mathcal{S}$ is encoded by the angle and the scaling ratio at the corresponding vertex. More precisely, using the notations of Figure 1, we define the following three numbers
\[
\alpha_{0}:=\theta_{0}+i\log\frac{bd}{ac},\quad\alpha_{1}:=\theta_{1}+i\log\frac{a}{b},\quad\text{and}\quad\alpha_{\infty}:=\theta_{\infty}+i\log\frac{c}{d},
\]
where $\theta_{0}:=\theta_{0}^{A}+\theta_{0}^{C}$. For convenience, we also introduce the numbers $\mu_{\zeta}:=\alpha_{\zeta}/2\pi$. On $\widehat{\mathbf{C}}$ we have two coordinates, $z$ (around 0) and $1/z$ (around $\infty$). It turns out that for $\zeta\in\{0,1\}$, the residue of $\mathcal{N}_{z}$ at $\zeta$, denoted $\text{res}_{\zeta}$, is given by
\[
\text{res}_{\zeta}=1-\frac{\alpha_{\zeta}}{2\pi}=1-\mu_{\zeta}.
\]
In fact we have
\[
\mathcal{N}_{z}=\left(\frac{1-\mu_{0}}{z}+\frac{1-\mu_{1}}{z-1}\right)\text{d}z.
\]
An easy computation shows that the residue of $\mathcal{N}_{1/z}$ at $\infty$ is exactly $1-\mu_{\infty}$. Finally, one can check that in charts around $\zeta\in\{0,1,\infty\}$, $\Phi(w)=w^{\mu_{\zeta}}(1+O(w))$.

\subsection{The affine surface associated to a given homogeneous vector field}

Let's go back to homogeneous vector fields in $\mathbf{C}^{2}$. If $\mathbf{v}=v_{1}\left(x,y\right)\partial_{x}+v_{2}\left(x,y\right)\partial_{y}$ is homogeneous of degree $k$, we define $p\circ g\left(x,y\right):=P\left(x,y\right)/y^{k+1}$ and $F\circ g\left(x,y\right):=v_{1}\left(x,y\right)/v_{2}\left(x,y\right)$, where $g(x,y)=h\circ\pi(x,y)=x/y$. Then, following again \cite{BuffRaissy}, the non-linearity of the chart $z$ on $\widehat{\mathbf
C}$ with respect to the meromorphic affine surface induced by $\mathbf{v}$ is given by

\[
\mathcal{N}_{z}=\left(\frac{p'\left(z\right)}{p\left(z\right)}-\frac{k-1}{z-F\left(z\right)}\right)\text{d}z.
\]
In the following, we assume $\mathbf{v}$ is of degree one and has three distinct characteristic directions. Without loss of generality, we can assume these are $L_{0}:=\left\{x=0\right\}$, $L_{1}:=\left\{x=y\right\}$ and $L_{\infty}:=\left\{y=0\right\}$. Then $\mathbf{v}$ can be written in the following form:
\[
\left\{
\begin{array}{l}
v_{1}\left(x,y\right)=-\alpha_{\infty}x^{2}+\left(\alpha_{1}+\alpha_{\infty}\right)xy\\[3pt]
v_{2}\left(x,y\right)=\left(\alpha_{0}+\alpha_{1}\right)xy-\alpha_{0}y^{2}
\end{array}
\right.
.
\]
This form is unique, up to permutation of the characteristic directions, if we impose the normalization $\sum\alpha_{\zeta}=2\pi$. Note that on each characteristic direction, $\mathbf{v}$ reduces to a quadratic vector field in $\mathbf{C}$. For instance, on $L_{0}$, the corresponding vector field in $\mathbf{C}$ is $-\alpha_{0}z^{2}\partial_{z}$. Its trajectories are given by $c(t)=1/(w+\alpha_{0}t)$ with $w\in\widehat{\mathbf{C}}$ (see Figure \ref{fig:Trajectoires_az2}).

\begin{figure}[ht]
\centering
\includegraphics[scale=0.25]{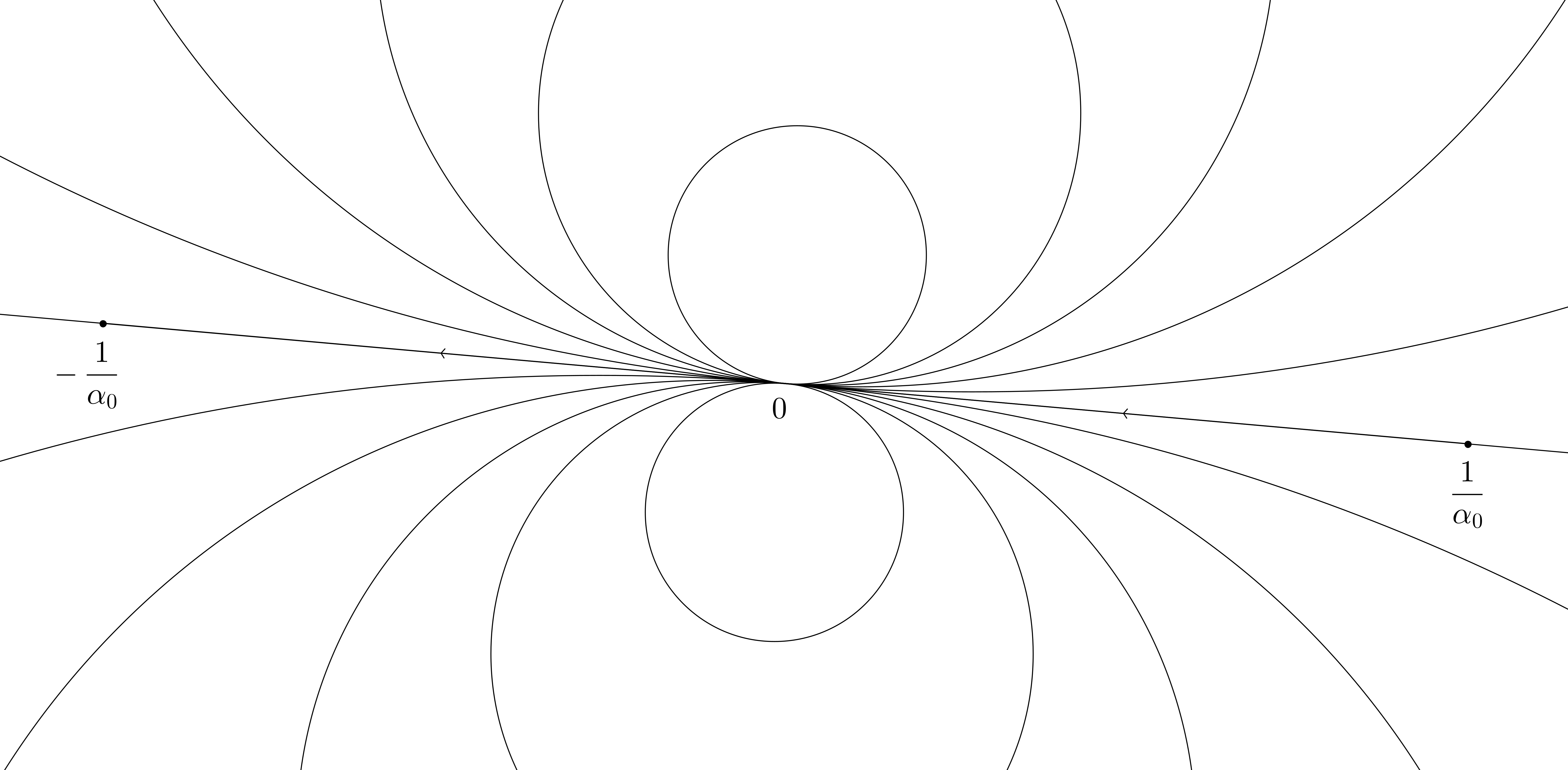}
\caption{\centering Some trajectories of the vector field $-\alpha_{0}z^{2}\partial_{z}$ for $\alpha_{0}=5\pi/4+(i\log2)/2$. As $t$ increases, the trajectories turn clockwise above the line $\left\{-t/\alpha_{0},t\in\mathbf{R}\right\}$, and counter-clockwise below}
\label{fig:Trajectoires_az2}
\end{figure}

With the normalization imposed, we get $P\left(x,y\right)=2\pi xy(x-y)$, hence $p(z)=2\pi z(z-1)$. Moreover, a simple computation shows that
\[
\mathcal{N}_{z}=\left(\frac{1-\mu_{0}}{z}+\frac{1-\mu_{1}}{z-1}\right)\text{d}z,
\]
where we have set $\mu_{\zeta}:=\alpha_{\zeta}/2\pi$ as before. Therefore, if the $\mu_{i}$'s come from a quadrilateral $\mathcal{Q}$ as explained before, then Lemma \ref{lem:aff_parti_pol} implies that the identity map from $\mathcal{S}_{\mathbf{v}}$ to $\mathcal{S}$ is affine. As a consequence, geodesics of $\mathcal{S}_{\mathbf{v}}$ correspond to geodesics of $\mathcal{S}$, and this allows to study them in the quadrilateral $\mathcal{Q}$, where they become (piecewise) straight.

\subsection{Main results}

Even if the affine structure comes from a quadrilateral as before, the geodesic flow can be tremendously difficult to understand. That is why we choose a very specific quadrilateral. From now on, let $\mathcal{Q}$ be the quadrilateral $ABCD$, where the points $A,B,C$ and $D$ have affixes $0,-i,2+i$ and $i$, respectively (see Figure \ref{fig:Triangle}). We use the same gluings and notations as previously, so that $\mathcal{S}$ is the resulting affine surface. We have 
\[
\alpha_{0}=\frac{5\pi}{4}+\frac{i}{2}\log2,\quad\alpha_{1}=\frac{\pi}{4}-\frac{3i}{2}\log2,\quad\text{and}\quad\alpha_{\infty}=\frac{\pi}{2}+i\log2.
\]

\begin{figure}[ht]
\centering
\includegraphics[scale=0.25]{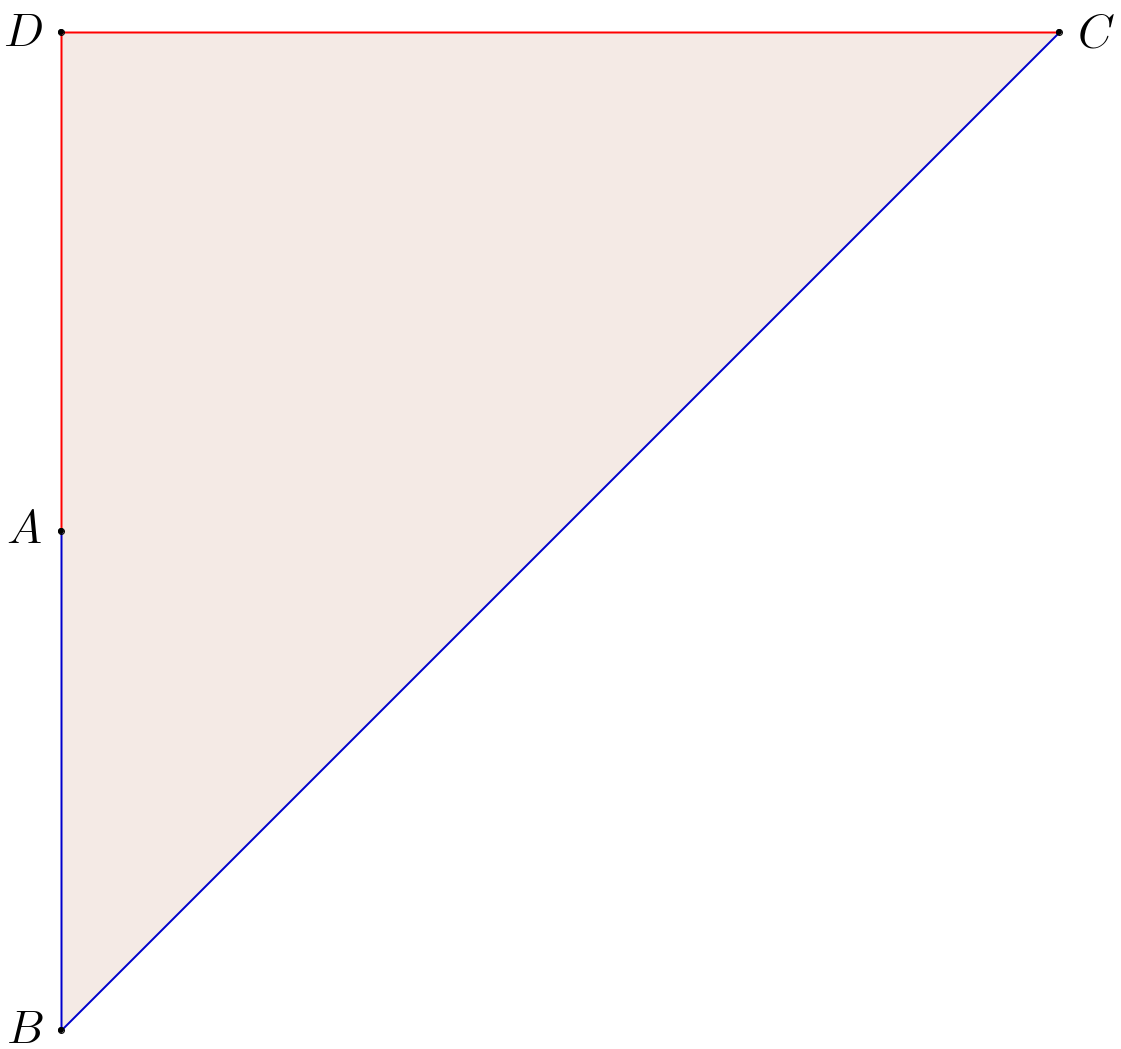}
\caption{A polygonal model of the surface $\mathcal{S}$}
\label{fig:Triangle}
\end{figure}

Let us note that this surface exhibits very similar behaviors to those of the Disco surface studied in \cite{BoulangerFougeronGhazouani}. More precisely, they show that the study of the geodesic flow in the Disco surface reduces to the study of a special kind of affine interval exchange transformations. These transformations will appear crucially in the study of $\mathcal{S}$. Nonetheless, the surface $\mathcal{S}$ is of particular interest because it induces a homogeneous vector field in $\mathbf{C}^{2}$ that we denote by $\mathbf{v}$. From what precedes, it can be written
\[
\left\{
\begin{array}{l}
\displaystyle v_{1}\left(x,y\right) = -\left(\frac{\pi}{2}+i\log2\right)x^{2}+\left(\frac{3\pi}{4}-\frac{i}{2}\log2\right)xy\\[3pt]
\displaystyle v_{2}\left(x,y\right) = \left(\frac{3\pi}{2}-i\log2\right)xy-\left(\frac{5\pi}{4}+\frac{i}{2}\log2\right)y^{2}
\end{array}
\right.
.
\]

Its trajectories for $\mathbf{v}$ will be denoted by $\gamma=\left(\gamma_{1},\gamma_{2}\right)$, and their projection under $\pi$ by $\delta:=\gamma_{1}/\gamma_{2}$.

\begin{comment}

\end{comment}

\begin{definition}
    We say that a trajectory for $\mathbf{v}$ is ``regular" if it is defined for infinite forward time, and ``irregular" otherwise.
\end{definition}

As a consequence of the classical ``Blow-up in finite time Theorem", all irregular trajectories for $\mathbf{v}$ blow-up in finite time. The following Proposition will follow easily from the study of the geodesics in $\mathcal{S}$.

\begin{prop}
\label{prop:accumulation}
    The accumulation set of any regular trajectory contains the origin.
\end{prop}

We finally state the main result of this paper. The proof will be given in Section \ref{sec:Chp_vect}.

\begin{thm}
\label{thm:main}
    We have the following decomposition of $\mathbf{C}^{2}\setminus\mathcal{C}$, invariant by $\mathbf{v}$:
    \begin{itemize}
        \item an open set $\mathcal{U}$ of full measure\footnote{By this we mean that the complement $\mathbf{C}^{2}\setminus\mathcal{U}$ has zero Lebesgue measure.}, with an infinite number of connected components. Inside $\mathcal{U}$, the regular trajectories tend to the origin.
        \item If $U$ is a connected component of $\mathcal{U}$, then the boundary $\partial U$ of $U$ in $\mathbf{C}^{2}\setminus\mathcal{C}$ consists of two components. The accumulation set of a regular trajectory starting from $\partial U$ is one of the circular trajectories contained in $L_{0}$.
        \item The other components of $\mathbf{C}^{2}\setminus(\mathcal{C}\cup\mathcal{U})$ are real manifolds of dimension 3. The accumulation set of each regular trajectory starting from one of these components is the union of two or three trajectories (non-reduced to the origin) in $L_{0}$.
    \end{itemize}
\end{thm}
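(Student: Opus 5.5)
The plan is to reduce everything to the geodesic flow of $\mathcal{S}$ via Theorem \ref{thm:AbateTovena}, and then to lift back to $\mathbf{C}^{2}$ by controlling the "radial" coordinate.

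\textbf{Step 1: radial equation.} Let $\gamma$ be a trajectory in $\mathbf{C}^{2}\setminus\mathcal{C}$ and write $\gamma=\gamma_{2}\cdot(\delta,1)$ with $\delta=\gamma_{1}/\gamma_{2}$. Then $\gamma_{2}'=\gamma_{2}^{2}\,v_{2}(\delta,1)$. Set $u:=1/\gamma_{2}$, so that $u'=-v_{2}(\delta,1)$. Using $\delta'=\gamma_{2}\,p(\delta)/\ldots$, and reparametrizing by the affine developing map $\Phi$, I would show the following: in the local chart $\Phi$ of $\mathcal{S}$, the geodesic $\Phi\circ\delta$ is a straight segment run at speed proportional to $\gamma_{2}\,(\text{derivative factor})$. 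Concretely, there is a holomorphic function $\rho$ on $\widehat{\mathbf{C}}\setminus\{0,1,\infty\}$, built from $\Phi'$ and $v_{2}$, such that $\frac{d}{dt}\Phi(\delta(t))=\gamma_{2}(t)^{-1}\cdot\text{const}\cdot\ldots$. The key point is that $1/\gamma_{2}$ is, up to an affine factor along each chart, exactly the affine parameter of the geodesic. Hence a trajectory is regular if and only if the geodesic has infinite length in the developed affine sense. It tends to $0$ when the affine lengths of the successive straight pieces, rescaled by the holonomy (dilation factors $\sqrt2$, etc.), sum to infinity with the appropriate scaling. This is the same mechanism that gives Proposition \ref{prop:accumulation}.

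\textbf{Step 2: geodesics in $\mathcal{Q}$.} The directional first-return map of a geodesic to a fixed transversal is an affine interval exchange, as in the Disco surface \cite{BoulangerFougeronGhazouani}. Following that paper, I would show three things. First, for an open dense, full-measure set of directions, the geodesic is attracted to a hyperbolic closed geodesic (an attracting periodic orbit of the AIET); these directions form countably many open intervals, which lift to $\mathcal{U}$. Second, the endpoints of these intervals correspond to geodesics accumulating on a closed loop around the singularity $0$ (vertex $A=C$). In $\mathbf{C}^{2}$ these become the circular trajectories in $L_{0}$; recall that on $L_{0}$ the field is $-\alpha_{0}z^{2}\partial_{z}$ with circular trajectories $1/(w+\alpha_{0}t)$. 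Third, the complementary Cantor-type set of directions yields minimal or saddle-connection behaviour, whose limit sets are unions of two or three saddle connections ending at $0$. These lift to unions of two or three trajectories in $L_{0}$. The "rotation number" from the abstract is the invariant indexing the components: it is the direction, modulo the holonomy action, carried along the trajectory. It is continuous on $\mathbf{C}^{2}\setminus\mathcal{C}$ and constant on trajectories. The level sets of an irrational or boundary value are real hypersurfaces, which gives the 3-dimensional pieces, and its locally constant region is $\mathcal{U}$.

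\textbf{Step 3: lifting and measure.} Each connected component $U$ is the preimage of a rotation-number interval, so its boundary consists of the two endpoint level sets. I would prove full measure by a Fubini argument: the set of exceptional directions has measure zero, by the Rauzy–Veech style renormalization of the AIET. Since the rotation number is smooth along the fibres of $g$, its exceptional level set has measure zero in $\mathbf{C}^{2}$. Finally, I would check the accumulation claims by pairing Step 1 with Step 2. Near a hyperbolic attracting closed geodesic, the holonomy is a contraction, so the radial parameter grows and $\gamma\to0$. Near the singularity $0$, the behaviour is modelled by $\Phi(w)=w^{\mu_{0}}(1+O(w))$, which shows that $\gamma$ approaches $L_{0}$ and shadows the trajectories of $-\alpha_{0}z^{2}\partial_{z}$.

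The main obstacle I expect is Step 2: classifying all directions of the AIET for this specific quadrilateral. This includes the full-measure statement and the fact that exactly two or three saddle connections appear. I would attack it by explicit renormalization, hoping that the self-similarity of $\mathcal{Q}$ (dilation by $2$) makes the Rauzy induction eventually periodic.
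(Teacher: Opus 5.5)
Your global architecture is the paper's: Abate--Tovena reduction, the first-return g-AIET $T_\theta$, a rotation number with rational plateaus, and Fubini for full measure. But the step that produces the second and third bullets is missing, and what you put in its place is wrong.

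\textbf{Step 1 is inverted.} From $\delta=\gamma_1/\gamma_2$ and homogeneity one gets $\delta'=-\gamma_2\,p(\delta)$, so $\gamma_2=-(\Phi\circ\delta)'/(\Phi'(\delta)\,p(\delta))$. Real time $t$ is already the affine parameter. The quantity $\gamma_2$ is the developed \emph{velocity} weighted by $1/(\Phi'(\delta)p(\delta))$; $1/\gamma_2$ is not an affine parameter.

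Away from the singularities this gives $\gamma\to 0$, because the developed speed is multiplied by $\lambda=1/16$ at each return. Your conclusion there is right, but for the wrong reason.

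Near the conical point $z=0$ (vertex $A=C$, with $0<\operatorname{Re}\mu_0<1$), work in the chart $\eta=\varphi\circ\delta$ with $\varphi(z)=z^{\mu_0}(1+O(z))$. The same relation gives $\gamma_2\approx-\delta'/(p'(0)\delta)\approx\eta'/(\alpha_0\eta)$. Since $\eta$ is affine in $t$, take a subsequential limit $l$ of the ratio (affine velocity)/(affine position relative to the singularity) at successive close passages. It forces the trajectory $t\mapsto 1/(\alpha_0t+\alpha_0/l)$ of $-\alpha_0y^2\partial_y$ into the accumulation set. This trajectory is a circle for finite $l\neq0$, and for $l=\infty$ it is the line through the origin, i.e.\ two half-line trajectories. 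This is the paper's lemma on conical singularities and Corollary \ref{cor:cercle_limite}. ``Shadowing $-\alpha_0z^2\partial_z$'' does not replace it, because it does not tell you \emph{which} trajectories of $L_0$ appear.

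\textbf{The irrational case.} The relevant object is therefore not the $\omega$-limit set of the geodesic in $\mathcal{S}$. It is the set $\Lambda_{x_0}$ of subsequential limits of $\lambda^n/(x_n-s)$, and your account of the irrational case is false. When $\text{rot}(\theta)\notin\mathbf{Q}$ there is no saddle connection at all: $s$ is periodic for neither extension of $T_\theta$, and the limit set of the geodesic is a Cantor lamination. So ``two or three saddle connections'' is not what is being counted.

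The count depends on the initial point, not only on the direction:
\begin{itemize}
\item If $x_0\in K_\theta$ is not a gap endpoint, then $\Lambda_{x_0}=\{0,\infty\}$. This gives the line, i.e.\ two trajectories.
\item If $x_0$ is an endpoint of a gap $T_\theta^{\circ k}(G)$, you get one additional finite nonzero value. This gives the line plus a circle, i.e.\ three trajectories.
\item If $x_0\in\,]a_k,b_k[$, then $\Lambda_{x_0}=\{0,1/(x_0-a_k),1/(x_0-b_k)\}$. This gives two circles.
\end{itemize}
Proving this needs two ingredients. The first is the density of the backward orbit of $s$ in $K_\theta$. The second is the minimal/non-minimal preimage dichotomy: minimal preimages drive the ratio to $\infty$ or to the gap-endpoint values, while non-minimal ones drive it to $0$ through the gap-size estimate.

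\textbf{The rational boundary case.} At the endpoints of a rational plateau you must show the ratio is eventually $q$-periodic along the saddle connection. That gives a single nonzero limit, hence one circle.

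\textbf{The invariant.} The invariant is not ``the direction modulo holonomy'', which is just $\theta$ and has no plateaus. It is $\text{rot}(\theta)$, the translation number of the lift of $T_\theta$. Its plateau/Cantor structure comes from the Rhodes--Thompson theory of monotone degree-one maps with gaps. It does not come from an eventually periodic Rauzy--Veech induction: at irrational parameters the induction never stops and is in general not eventually periodic. The paper uses the induction only for the dimension estimate.
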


In fact, we will make this statement more accurate. In particular, we will define a "rotation number" on $\mathbf{C}^{2}\setminus\mathcal{C}$, and we will show that each connected component $U$ of $\mathcal{U}$ corresponds to a rational value of the rotation number. Moreover, for any trajectory $\gamma$ starting from $U$, its projection $\delta$ accumulates a closed geodesic, whose homotopy class in $\mathbf{C}\setminus\left\{0,1\right\}$ completely determines $U$.

\vspace{1\baselineskip}

\textbf{Acknowledgments.} 

I would like to thank Xavier Buff for introducing me to these questions and for carefully reviewing the preliminary versions of this paper. This research was greatly inspired by discussions during the thematic semester \textit{Holomorphic Dynamics and Geometry of Surfaces}, held from February to June 2025 and funded by the LabEx CIMI. I am also grateful to Selim Ghazouani for very fruitful discussions.

\section{Geodesics in $\mathcal{S}$}

\subsection{Reduction to a one-dimensional dynamical system}

In this section we investigate the geodesic flow on $\mathcal{S}$. More precisely, we study geodesics in the fundamental domain $\mathcal{Q}$. Since the angles in $\mathcal{Q}$ are rational, the dynamics on $\mathcal{S}$ is very similar to the dynamics on a dilation surface. In fact, it is covered by the dilation surface $\widetilde{\mathcal{S}}$ pictured in Figure \ref{fig:Revetement}. The problem is that $\widetilde{\mathcal{S}}$ has genus 3, and the covering has degree 8. So even though in $\widetilde{\mathcal{S}}$ the direction of the geodesics is preserved, we will only work in $\mathcal{S}$.

\begin{figure}[ht]
\centering
\includegraphics[scale=0.4]{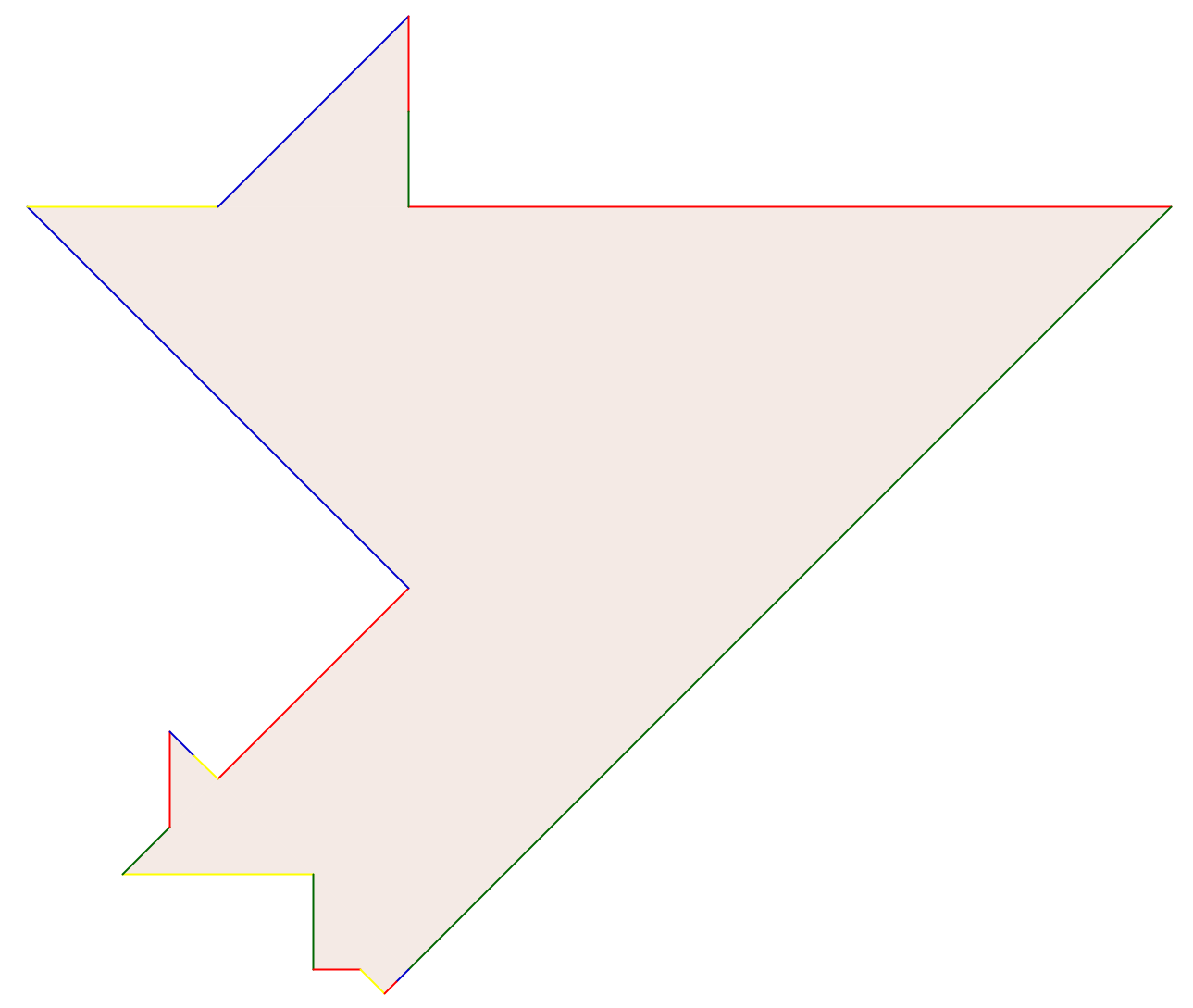}
\caption{\centering A degree 8 covering of $\mathcal{S}$ by a genus 3 dilation surface. Parallel edges of the same color are identified}
\label{fig:Revetement}
\end{figure}

%Another example of a meromorphic affine sphere that could be studied in the same way, denoted by $\mathcal{S'}$, is defined similarly by gluing $\left[A',B'\right]$ with $\left[B',C'\right]$ and $\left[C',D'\right]$ with $\left[A',D'\right]$ in the quadrilateral $A'B'C'D'$, where the points $A',B',C'$ and $D'$ have coordinates $\left(0,0\right),\left(1,0\right),\left(0,1\right)$ and $\left(-1,0\right)$, respectively \textcolor{blue}{(ajouter une image)}.\\
Because geodesics in $\mathcal{Q}$ are piecewise straight, we can talk the angle of the geodesic at any given point. We will measure all the angles with respect to the positive real half-line. Also, let $\Omega$ be the set of points $(x,\theta)\in\partial\mathcal{Q}\times\left[0,2\pi\right]$ such that the geodesic emanating from $x$ with initial angle $\theta$ enters $\mathcal{Q}$. We shall explain how the geodesic flow on the surface reduces to a discrete dynamical system $T$ on $\Omega$. So let $(x,\theta)\in\Omega$ and $\delta$ be the geodesic emanating from $x$ with angle $\theta$. Let $y$ be the next point of $\partial\mathcal{Q}$ reached by $\delta$. If $y$ is a singularity, then $T$ is not defined at $(x,\theta)$. Otherwise, $\delta$ will leave $\partial\mathcal{Q}$ from a new point $z$ (which is identified with $y$ in $\mathcal{S}$) with an angle $\theta'$. We then define $T(x,\theta):=(z,\theta')$. The set $\Omega$ is called the \textit{phase space} of $T$ or the \textit{Poincaré section} of the geodesic flow. In the following, we will write $\left(x_{n},\theta_{n}\right)$ for the $n$-th iterate (if defined) of a point $\left(x_{0},\theta_{0}\right)$ under $T$. Following this point of view, a geodesic for $\mathcal{S}$ corresponds to an orbit under $T$, uniquely determined by its initial position on the boundary of the quadrilateral and its initial angle.\\
In the following, we will use the standard Euclidean metric on $\mathcal{Q}$ to measure lengths of tangent vectors.
Moreover, geodesics can be time-parameterized, and we may assume that the initial speed is always $1$. A classical way of studying a given geodesic is to unfold the triangle along this geodesic, so that it becomes a straight line, hence traveling at constant speed with respect to the Euclidean metric on the plane. We begin with the following trivial, but important, remark.

\begin{rmk}
    From Figure \ref{fig:Triangle}, it is clear that if the initial point $x_{0}\in\left]A,D\right[\cup\left]A,B\right[$, then, for all $n\in\mathbf{N}$, $x_{n}\in\left]A,D\right[\cup\left]A,B\right[$, provided it is well defined.
\end{rmk}

By analogy with the trajectories of $\mathbf{v}$, we say a geodesic is ``regular" if it is defined for infinite forward time.

\begin{lemma}
\label{RestrictABAD}
    Any regular geodesic hits after a finite number of iterations one of the edges $\left]B,C\right[$ and $\left]C,D\right[$.
\end{lemma}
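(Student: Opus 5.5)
The plan is to argue directly from the geometry of the polygonal model $\mathcal{Q}$ in Figure \ref{fig:Triangle}, using the Poincaré section $T$ on $\Omega$. No finer dynamics should be needed: I expect the geodesic to reach $\left]B,C\right[\cup\left]C,D\right[$ within one or two straight pieces.

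The first step is to record two facts about $\mathcal{Q}$.
\begin{enumerate}
\item The vertices are $A=0$, $B=-i$, $C=2+i$, $D=i$. Hence $A$, $B$, $D$ lie on the imaginary axis, and $[A,B]\cup[A,D]=[B,D]$ is a single straight segment. So the interior angle at $A$ equals $\pi$.
\item The quadrilateral $\mathcal{Q}$ is convex: it is the triangle $BCD$, with $A$ a marked point on the side $[B,D]$. The triangle $BCD$ lies in the half-plane $\{\operatorname{Re}>0\}$, apart from its side $[B,D]$.
\end{enumerate}

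Next I take a regular geodesic $\delta$ and follow its first straight piece inside $\mathcal{Q}$. This piece starts either at an interior point or at a boundary point $x_{0}$ with $(x_{0},\theta_{0})\in\Omega$.
\begin{itemize}
\item If $x_{0}\in\left]B,C\right[\cup\left]C,D\right[$, there is nothing to prove.
\item If $x_{0}\in\left]A,D\right[\cup\left]A,B\right[$ (the point $A$ is excluded, being a singularity), then by the definition of $\Omega$ the initial direction points into $\mathcal{Q}$. Thus $\theta_{0}\in\left]-\pi/2,\pi/2\right[$, and the real part of the position increases strictly along the segment. By convexity the segment exits $\mathcal{Q}$ at a point $y$ of $\partial\mathcal{Q}$. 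This point cannot lie on $[B,D]$, since that would require the real part to return to $0$. Hence $y\in[B,C]\cup[C,D]$.
\item If $\delta$ starts at an interior point, its forward straight piece reaches $\partial\mathcal{Q}$. If it lands on $\left]B,C\right[\cup\left]C,D\right[$ we are done. If it lands on $\left]A,B\right[\cup\left]A,D\right[$, it is glued, possibly after passing through the other identified edge, and re-enters $\mathcal{Q}$ from a point of $\left]A,B\right[\cup\left]A,D\right[$. We are then in the previous case after one iteration of $T$.
\end{itemize}

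The last step is to rule out the degenerate exits. In the second case the exit point $y$ could a priori be $B$, $C$ or $D$. Each of these is a singularity of $\mathcal{S}$, so $T$ is undefined there and the geodesic stops. A geodesic reaching one of them is therefore not defined for infinite forward time, contradicting regularity. The same remark disposes of landing on $A$ in the third case. Hence the exit point lies in the open edges $\left]B,C\right[\cup\left]C,D\right[$, as claimed.

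I do not expect a real obstacle here. The only point needing care is the bookkeeping of what ``hits an edge'' means when a point of $\left]A,B\right[$ is identified with a point of $\left]B,C\right[$, and likewise $\left]A,D\right[$ with $\left]C,D\right[$. I will phrase everything in terms of the exit point $y$ of a straight piece, as opposed to the re-entry point $z$ produced by $T$. This is consistent with the Remark above, which says the re-entry points $x_{n}$ always lie in $\left]A,D\right[\cup\left]A,B\right[$, while the exit points lie in $\left]B,C\right[\cup\left]C,D\right[$.
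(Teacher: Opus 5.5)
Your second case is correct; it is the Remark preceding the lemma. But it is the easy part, and the case that carries the content of the lemma is mishandled.

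Under your own convention that ``hitting'' an edge means having the exit point $y$ of a straight piece there, a geodesic starting at $x_{0}\in\left]B,C\right[\cup\left]C,D\right[$ has not hit these edges yet. So ``nothing to prove'' is unjustified: its next piece may exit through $\left]A,B\right[$ or $\left]A,D\right[$.

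Your third case contains the matching error. Since $[A,B]$ is glued to $[B,C]$ and $[A,D]$ to $[C,D]$, a piece exiting through $\left]A,B\right[$ (resp.\ $\left]A,D\right[$) re-enters through $\left]B,C\right[$ (resp.\ $\left]C,D\right[$), not through $\left]A,B\right[\cup\left]A,D\right[$. You are then back in the first case, and nothing in your argument prevents the cycle from repeating.

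It does repeat. A piece exiting through $\left]A,B\right[$ at $-ih$ with angle $\pi$, where $3/4<h<1$, re-enters at $2(1-h)+i(1-2h)\in\left]B,C\right[$ with angle $3\pi/4$. It then exits through $\left]A,B\right[$ again, at $-i(4h-3)$. So ``one or two straight pieces'' is false.

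Worse, take the closed geodesic through a fixed point of $T_{\theta}$ for small $\theta$. Its time-reversal never meets a singularity, yet by the Remark all its exit points lie in $\left]A,B\right[\cup\left]A,D\right[$. Your argument uses regularity only to exclude the vertices, so it would apply verbatim to this geodesic and reach a false conclusion.

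The missing idea is to use that a regular geodesic is defined for \emph{infinite} forward time. Each passage out through $\left]A,B\right[$ (resp.\ $\left]A,D\right[$) and back in through $\left]B,C\right[$ (resp.\ $\left]C,D\right[$) multiplies the Euclidean speed measured in $\mathcal{Q}$ by the ratio of the glued edge lengths, $2\sqrt{2}$ (resp.\ $2$). In the paper's formulation, the successive copies of $\mathcal{Q}$ in the unfolding shrink by a factor at least $2$.

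Every straight piece has length at most $\operatorname{diam}\mathcal{Q}=2\sqrt{2}$. Hence the piece following the $n$-th such passage lasts at most $2\sqrt{2}/(2^{n}\lvert v_{0}\rvert)$, where $v_{0}$ is the initial velocity. A geodesic that never exits through $\left]B,C\right[\cup\left]C,D\right[$ therefore has total forward lifetime at most $4\sqrt{2}/\lvert v_{0}\rvert$, so it is not regular. Your second case then handles everything after the first such exit.
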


\begin{proof}
    Consider a geodesic that only hits the edges $\left]A,B\right[$ and $\left]A,D\right[$. Unfold the initial triangle along the geodesic. Each new triangle gets contracted by a factor of at least $2$. Since the initial triangle is enclosed in a ball of diameter $2\sqrt{2}$, the entire geodesic remains contained within a ball of diameter $2\sqrt{2}\sum1/2^{n}=4\sqrt{2}$. This means that the geodesic travels a finite distance, but since it travels at constant speed, it cannot be regular.
\end{proof}

 We will focus on regular geodesics. From what precedes, the speed of a regular trajectory tends to 0. According to Lemma \ref{RestrictABAD}, we can assume that our trajectories begin in the intervals $\left]A,B\right[$ or $\left]A,D\right[$. Additionally, after possibly one iteration, we can assume that the initial angle is within the range $\left[0,\pi/4\right]$. Finally, we will utilize the following lemma to further limit our focus to trajectories that start at the edge $\left]A,B\right[$.

\begin{lemma}
\label{RestrictAB}
    Let $x_{0}\in\left]A,D\right[$ and $\theta_{0}\in\left[0,\pi/4\right]$. Assume $\left(x_{3},\theta_{3}\right)$ is well defined. Then $\theta_{3}=\theta_{0}$ and $x_{3}\in\left]A,B\right[$.
\end{lemma}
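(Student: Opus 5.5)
The plan is to track, along the first three iterates, which edge is hit and how the direction changes under the gluings. The key geometric observation is that $A=0$, $B=-i$ and $D=i$ are collinear, so $]A,D[\,\cup\,]A,B[$ is a single vertical segment on the imaginary axis, with $\mathcal{Q}$ lying to its right. Hence whenever a geodesic leaves a point of $]A,D[\,\cup\,]A,B[$ into $\mathcal{Q}$, its angle must lie in $]-\pi/2,\pi/2[$.

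\textbf{Step 1: the gluing maps.} I will first record the two affine gluings explicitly.
\begin{itemize}
\item The gluing of $[C,D]$ to $[A,D]$ fixes $D$ and sends $C$ to $A$. It is $z\mapsto D+\frac{-i}{2}(z-D)$, a rotation by $-\pi/2$ composed with a contraction. A geodesic exiting through $]C,D[$ with angle $\theta$ therefore re-enters through $]A,D[$ with angle $\theta-\pi/2$.
\item The gluing of $[B,C]$ to $[A,B]$ fixes $B$ and sends $C$ to $A$. It is $z\mapsto B+\frac{1+i}{4}(z-B)$, a rotation by $+\pi/4$. A geodesic exiting through $]B,C[$ with angle $\theta$ re-enters through $]A,B[$ with angle $\theta+\pi/4$.
\end{itemize}
From a point of the vertical segment, a ray with angle in $[0,\pi/2[$ can only exit through $]C,D[$ or $]B,C[$. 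A ray with angle in $]-\pi/2,0]$ cannot reach the horizontal edge $]C,D[$ from below, so it must exit through $]B,C[$.

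\textbf{Step 2: case analysis on the first exit edge.} Start at $x_0=it\in\,]A,D[$ with $\theta_0\in[0,\pi/4]$.
\begin{itemize}
\item \emph{Case (a): the first exit is through $]B,C[$.} Then $x_1\in\,]A,B[$ and $\theta_1=\theta_0+\pi/4\in[\pi/4,\pi/2]$. Next I show the ray from $x_1=is$, $s\in\,]-1,0[$, cannot meet $]B,C[$. That edge lies on the line $y=x-1$ of slope $1$; the start point is strictly above it, and the ray has slope at least $1$. So the second exit is through $]C,D[$, giving $x_2\in\,]A,D[$ and $\theta_2=\theta_0-\pi/4\in[-\pi/4,0]$. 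By Step 1 the third exit is through $]B,C[$, so $x_3\in\,]A,B[$ and $\theta_3=\theta_0$.
\item \emph{Case (b): the first exit is through $]C,D[$.} Then $x_1\in\,]A,D[$ and $\theta_1=\theta_0-\pi/2\in[-\pi/2,-\pi/4]$. The degenerate value $-\pi/2$ would correspond to $\theta_0=\pi/2$, which is excluded, or else the ray runs along the boundary, which must be discarded as a non-entering direction. The ray goes downward, so it exits through $]B,C[$, giving $x_2\in\,]A,B[$ and $\theta_2=\theta_0-\pi/4\in[-\pi/4,0]$. Again it goes down-right and exits through $]B,C[$, so $x_3\in\,]A,B[$ and $\theta_3=\theta_0$.
\end{itemize}
In both cases the total rotation is $0$: in case (a) it is $+\pi/4-\pi/2+\pi/4$, and in case (b) it is $-\pi/2+\pi/4+\pi/4$.

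\textbf{Step 3: boundary angles and singularities.} The main care point is the boundary values $\theta_0\in\{0,\pi/4\}$, together with ensuring that no iterate lands on a vertex. Landing on a vertex is excluded by the hypothesis that $(x_3,\theta_3)$ is defined. The borderline directions need a separate check.
\begin{itemize}
\item In case (a) with $\theta_1=\pi/2$, the ray is vertical and still exits through $]C,D[$.
\item In case (b), the angle $\theta_1=-\pi/4$ is the slope of $BC$ only up to sign. A ray of slope $-1$ from a point above $B$ on the axis meets the line $y=x-1$ transversally, so it exits through $]B,C[$ as claimed.
\end{itemize}
I expect these edge-case verifications, rather than the angle bookkeeping itself, to be the only delicate part.
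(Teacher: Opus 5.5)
Your proof is the paper's own argument: split according to whether the first exit is through $]B,C[$ or $]C,D[$, then add the rotations $+\pi/4$ and $-\pi/2$ of the two gluings, with your explicit gluing maps and the line $y=x-1$ check supplying what the paper leaves to the figure. The only slips concern the borderline angles, which are vacuous rather than in need of verification: $\theta_1=\pi/2$ in case (a) would force $\theta_0=\pi/4$, whose ray from $]A,D[$ always exits through $]C,D[$ (and a vertical ray from $]A,B[$ would in any case run along $\partial\mathcal{Q}$ into $A$, not exit through $]C,D[$ as you claim), while $\theta_1=-\pi/2$ in case (b) corresponds to $\theta_0=0$ (not $\pi/2$), whose horizontal ray from $]A,D[$ never reaches $]C,D[$.
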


\begin{proof}
    If $x_{1}\in\left]A,B\right[$, then $\theta_{1}=\theta_{0}+\pi/4\in\left[\pi/4,\pi/2\right]$, which implies $x_{2}\in\left]A,D\right[$ and $\theta_{2}=\theta_{1}-\pi/2\in\left[-\pi/4,0\right]$. Therefore, $x_{3}\in\left]A,B\right[$ and $\theta_{3}=\theta_{2}+\pi/4=\theta_{0}$. Similarly, if $x_{1}\in\left]A,D\right[$, then $\theta_{1}=\theta_{0}-\pi/2\in\left[-\pi/2,-\pi/4\right]$, which implies $x_{2}\in\left]A,B\right[$ and $\theta_{2}=\theta_{1}+\pi/4\in\left[-\pi/4,0\right]$. Therefore, $x_{3}\in\left]A,B\right[$ and $\theta_{3}=\theta_{2}+\pi/4=\theta_{0}$.
\end{proof}

Henceforth, $\left(x_{0},\theta_{0}\right)$ is a point in $\left[A,B\right]\times\left[0,\pi/4\right]$ whose forward orbit under $T$ is well defined. From the proof of Lemma \ref{RestrictAB}, we have $\theta_{3}=\theta_{0}$ and $\theta_{1},\theta_{2}\neq\theta_{0}$. The first return map to the angle $\theta_{0}$ is therefore defined by $T_{\theta_{0}}\left(x_{0}\right):=x_{3}$. The upshot is that we have reduced the dynamics on the surface to a dynamics on an interval. Therefore, we may write the initial angle $\theta$ instead of $\theta_{0}$. Proposition \ref{prop:echangesintervinduits} gives a precise description of the induced map. For this we need the following definition.

\begin{definition}
\label{def:gAIET}
    Let $I=[a,b]$ be a non-trivial interval. Let $n\geq2$ be an integer and $x_{1},\dots,x_{n+1}\in I$ such that $a=x_{1}<\dots<x_{n+1}=b$. For $i=1,\dots,n$, let $f_{i}:[x_{i},x_{i+1}]\rightarrow I$ be an affine map that preserves the orientation. Let us define $f:I\setminus\{x_{2},\dots,x_{n}\}\rightarrow I$ by $f(x)=f_{i}(x)$ if $x\in]x_{i},x_{i+1}[$ and by $f(a)=f_{1}(a)$ and $f(b)=f_{n}(b)$. If $f$ is injective, then it is called an interval exchange transformation with gaps (g-AIET).\\
    A contracting g-AIET is a g-AIET $f$ such that all $f_{i}$'s are contracting.
\end{definition}

For $i=2,\dots,n$, each $x_{i}$ in Definition \ref{def:gAIET} is called a ``singularity" of $f$. Note that through each singularity $x_{i}$, $f$ extends continuously to the left by setting $f(x_{i})=f_{i-1}(x_{i})$, and to the right by setting $f(x_{i})=f_{i}(x_{i})$.

\begin{prop}
    \label{prop:echangesintervinduits}
    Writing $\widetilde{\theta}:=\pi/2-\arctan\left(2\right)\approx26.57^{\circ}$, we get:
    \begin{itemize}
        \item if $\theta\in\bigl[0,\widetilde{\theta}\bigr]\cup\left\{\pi/4\right\}$, then $T_{\theta}$ is a strictly contracting affine map;
        \item if $\theta\in\bigl]\widetilde{\theta},\pi/4\bigr[$, then $T_{\theta}$ is a g-AIET with two intervals (only one singularity).
    \end{itemize}
\end{prop}

\begin{proof}
    For all $\theta\in\bigl[0,\widetilde{\theta}\bigr]$ (respectively $\theta=\pi/4$) and all $x_{0}\in\left]A,B\right[$, $x_{1}\in\left]A,B\right[$ (respectively $x_{1}\in\left]A,D\right[$), because $\widetilde{\theta}$ is the angle for which we have a saddle connection joining $A$ and $C$. Therefore, no point is mapped after one iteration to a singularity, and it is not difficult to check that $T_{\theta}$ extends continuously to the whole interval $\left[A,B\right]$. Then it is easy to see that $T_{\theta}$ is a composition of strictly contracting affine maps.\\
    Now, if $\theta\in\bigl]\widetilde{\theta},\pi/4\bigr[$, then the point $\left(s,\theta\right)$, where $s$ has coordinates $\left(0,1-2\tan\theta\right)$ in $\mathbf{R}^{2}$, is mapped to the singular point $C$. Hence, the segments $\left]A,s\right[$ and $\left]s,B\right[$ are mapped to $\left]C,D\right[$ and $\left]B,C\right[$, respectively. At this point, it is not hard to see that $\left(s,\theta\right)$ is the only point whose image under $T_{\theta}$ is not well defined. Finally, an easy computation shows that, seeing $[A,B]$ as $[0,1]$,
    \begin{equation}
    \label{eq:formules_T_theta}
    T_{\theta}(x)=\left\{
    \begin{array}{cc}
        \displaystyle\frac{x}{16}+1-\frac{4\tan\theta-1}{16} & \text{if }x\in[0,2\tan\theta-1[ \\
        \displaystyle\frac{x-1}{16}+1-\frac{3\tan\theta+1}{4} & \text{if }x\in]2\tan\theta-1,1]
    \end{array}
    \right.
    .
    \end{equation}
    Figures \ref{fig:Triangle_deplie} and \ref{fig:Graph} are quite helpful.    
\end{proof}

\begin{figure}[ht]
\centering
\includegraphics[scale=0.25]{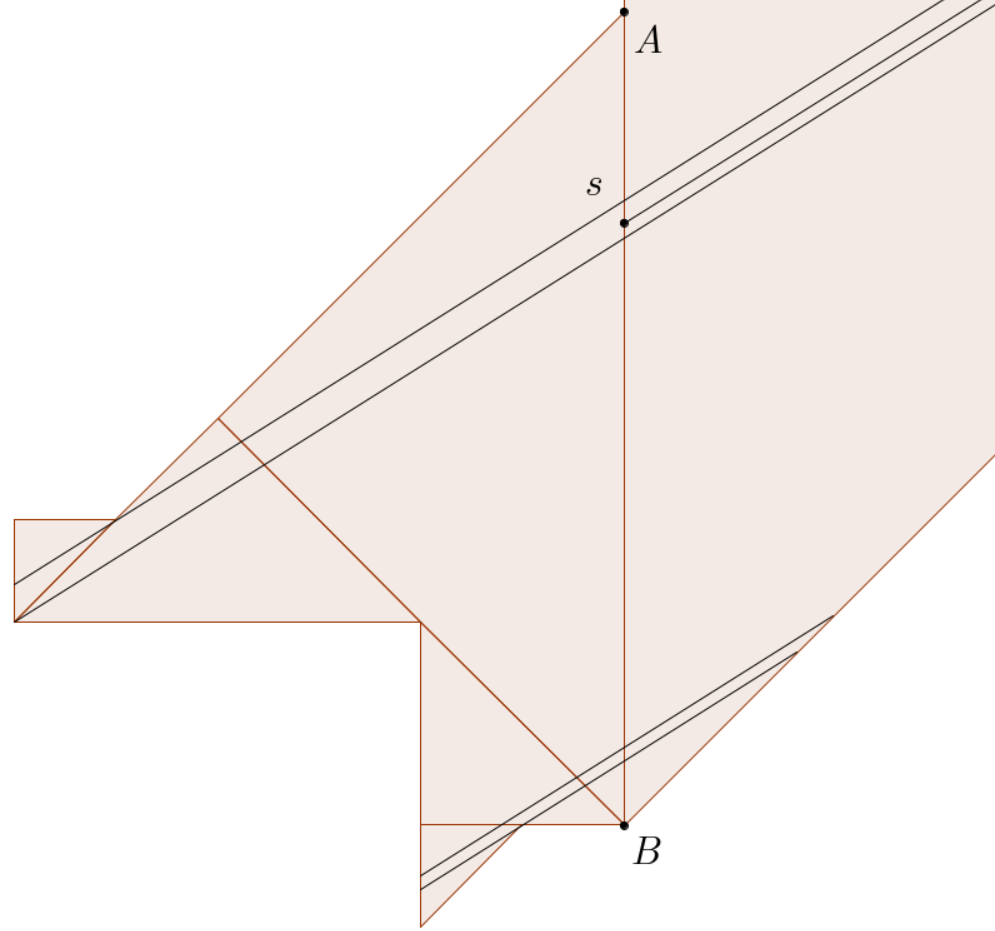}
\caption{\centering The initial quadrilateral is unfolded ``backward" to visualize the g-AIET. The point $s$ hits the singularity at $C$}
\label{fig:Triangle_deplie}
\end{figure}

\begin{cor}
\label{Limitepetitangle}
    Let $\left(x_{0},\theta_{0}\right)\in\left[A,B\right]\times\left(\bigl[0,\widetilde{\theta}\bigr]\cup\left\{\pi/4\right\}\right)$ such that $\left(x_{n}\right)_{n\in\mathbf{N}}$ is well defined. Then $\left(x_{n}\right)_{n\in\mathbf{N}}$ is attracted by a $3$-cycle.
\end{cor}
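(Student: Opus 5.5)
The plan is to combine Proposition \ref{prop:echangesintervinduits} with the Banach fixed point theorem on the compact interval $[A,B]$. For $\theta_{0}\in\bigl[0,\widetilde{\theta}\bigr]\cup\{\pi/4\}$, that Proposition says the first return map $T_{\theta_{0}}$ extends to a strictly contracting affine map of $[A,B]$ into itself. By construction, $T_{\theta_{0}}(x_{0})=x_{3}$ and $\theta_{3}=\theta_{0}$ (Lemma \ref{RestrictAB} and the discussion after it). By induction, $(x_{3m},\theta_{3m})=(T_{\theta_{0}}^{m}(x_{0}),\theta_{0})$ for all $m$, since the hypothesis guarantees every iterate is defined.

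Since $[A,B]$ is complete and $T_{\theta_{0}}$ is a contraction, it has a unique fixed point $p\in[A,B]$. Moreover, $T_{\theta_{0}}^{m}(x_{0})\to p$ geometrically, with ratio equal to the contraction factor; from the formulas one expects $1/16$ or similar, but only strict contraction is needed. The next step is to check that $p$ is an honest point of the phase space, not a singularity or an endpoint where $T$ is undefined. For $\theta_{0}\le\widetilde{\theta}$, the proof of Proposition \ref{prop:echangesintervinduits} shows that no point of $[A,B]$ is sent to a singularity, except possibly through the saddle connection at $\theta_{0}=\widetilde{\theta}$ or at the endpoints. I will treat these boundary cases by noting that $T_{\theta_{0}}$ maps $[A,B]$ into a compact subinterval of $]A,B[$, which can be read off the explicit affine formula. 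Then $p$ lies in the interior, and the three iterates $(p,\theta_{0})$, $T(p,\theta_{0})$ and $T^{2}(p,\theta_{0})$ are well defined. Because $T^{3}(p,\theta_{0})=(p,\theta_{0})$ and $\theta_{1},\theta_{2}\neq\theta_{0}$, this is a genuine $3$-cycle of $T$, not a fixed point.

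Finally, I will transfer the convergence from the subsequence $x_{3m}$ to the full orbit. On a neighbourhood of $p$ (respectively of $T(p)$), the maps $T$ and $T^{2}$ are given by fixed affine edge-to-edge gluing maps, hence are continuous there. So $x_{3m+1}=T(x_{3m})\to T(p)$ and $x_{3m+2}\to T^{2}(p)$, with the angles constant along each residue class. Thus $(x_{n},\theta_{n})$ is attracted by the $3$-cycle through $(p,\theta_{0})$. The same argument applies if the starting point lies on $]A,D[$, after applying Lemma \ref{RestrictAB}.

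The main obstacle is the boundary bookkeeping. One must ensure the fixed point $p$, and its two intermediate images, avoid the vertices $A,B,C,D$; this is delicate exactly at $\theta_{0}=\widetilde{\theta}$ (the saddle connection to $C$) and at $\theta_{0}=\pi/4$, where $x_{1}$ lands on $]A,D[$. I would first settle this by computing the image interval $T_{\theta_{0}}([A,B])$ explicitly in these two cases, in the spirit of formula (\ref{eq:formules_T_theta}).
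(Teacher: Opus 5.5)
Your proposal is correct and follows the paper's argument: $T_{\theta_0}$ is a strict contraction, so $(x_{3n})$ converges to its fixed point, and the only real content is checking via the explicit formula (\ref{eq:formules_T_theta}) that this fixed point is neither $A$ nor $B$. Indeed $T_{\theta_0}([0,1])\subset[5/16,3/4]$ for $\theta_0\le\widetilde{\theta}$ and $T_{\pi/4}([0,1])=[13/16,7/8]$; your handling of the intermediate iterates and of the full sequence $(x_n)$ supplies details the paper leaves to the reader.
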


\begin{proof}
    The only non-trivial fact at this point is that the limit of $\left(x_{3n}\right)_{n\in\mathbf{N}}$ is neither $0$ nor $1$, but this is easily  checked by computing the explicit expression of $T_{\theta}$. Details are left to the reader.
\end{proof}

\subsection{Dynamics of $T_{\theta}$}

Let $f:\left[0,1\right]\rightarrow\left[0,1\right]$ be a g-AIET (see Definition \ref{def:gAIET}). Then, for any $x\in\left[0,1\right]$ whose forward orbit is well defined, its omega-limit set $\omega\left(x\right)$ (which does not depend on $x$) is either a finite set, or a Cantor set, or the whole interval $\left[0,1\right]$. In the first case, $\omega\left(x\right)$ consists of a periodic orbit. It might happen that $\omega\left(x\right)$ contains a singularity $s$, in which case $s$ is periodic for one of the two extensions of $f$ through $s$ discussed above (refer to Definition \ref{def:gAIET} and to the discussion that follows). In the last case, the orbit of $x$ is dense in $\left[0,1\right]$. This phenomenon cannot occur in our situation since our g-AIETs are contracting. We shall justify that, in our situation, the first case is generic, in the sense that for almost all $\theta$'s, $T_{\theta}$ has an attracting periodic orbit. We shall also see that the set of initial angles for which the omega-limit set is a Cantor set is itself a Cantor set of zero Hausdorff dimension.

To deal with the angles $\theta\in\bigl]\widetilde{\theta},\pi/4\bigr[$, the idea is to associate to $T_{\theta}$ a rotation number. We will be able to use classical results to argue that it depends continuously on $\theta$. Angles giving a rational rotation number will correspond to case 1, while angles giving an irrational rotation number will correspond to case 2.\\
For convenience, let us first extend $T_{\theta}$ to a right-continuous map by setting
\[
\left\{
\begin{array}{rcl}
  T_{\theta}\left(0\right) & = & \lim_{x\rightarrow0}T_{\theta}\left(x\right)\\
  T_{\theta}\left(2\tan\theta-1\right) & = & \lim_{x\rightarrow\left(2\tan\theta-1\right)^{+}}T_{\theta}\left(x\right)
\end{array}
\right.
.
\]
It then induces a discontinuous map of the circle (see Figure \ref{fig:Graph}), and we can consider the lift $\widetilde{T_{\theta}}$ defined on $\left[0,1\right[$ by
\[
    \widetilde{T_{\theta}}\left(x\right)=\left\{
    \begin{array}{cc}
        T_{\theta}\left(x\right) & \text{if }x\in\left[0,2\tan\theta-1\right[ \\
        T_{\theta}\left(x\right)+1 & \text{if }x\in\left[2\tan\theta-1,1\right[
    \end{array}
    \right.
\]
and extended to $\mathbf{R}$ by the relation 
\begin{equation}
    \label{eq:Extension_degree1}
    \widetilde{T_{\theta}}\left(x+1\right)=\widetilde{T_{\theta}}\left(x\right)+1.
\end{equation}

\begin{figure}[ht]
\centering
\includegraphics[scale=0.3]{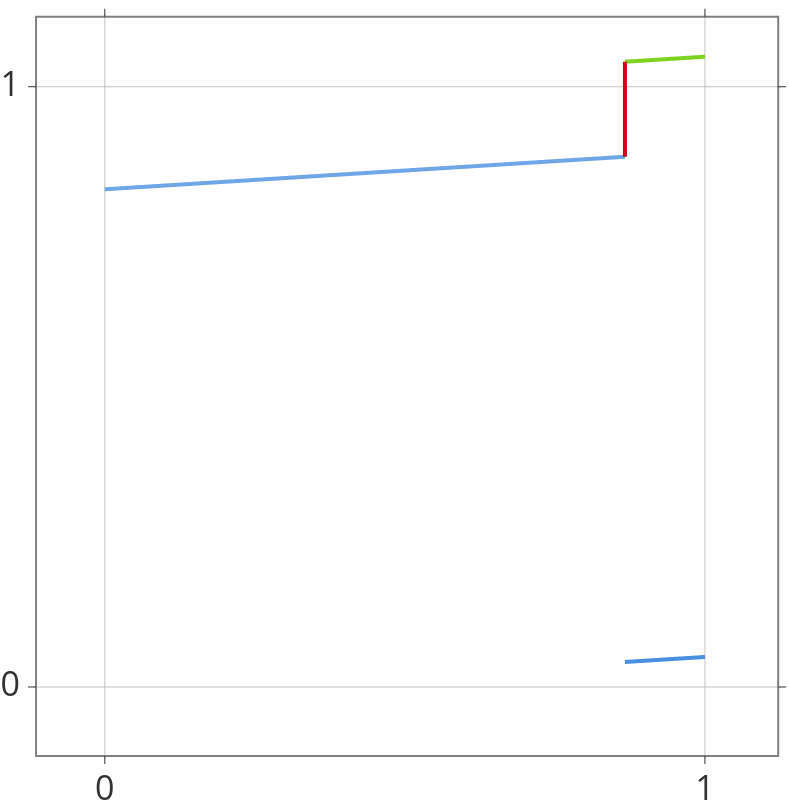}
\caption{\centering Blue: graph of $T_{\theta}$ for $\theta=\arctan\left(14/15\right)$. Green: graph of the lift $\widetilde{T_{\theta}}$. Red: extension to a set-valued map}
\label{fig:Graph}
\end{figure}

In fact, any extension of $T_{\theta}$ through the singularity lifting to an increasing function satisfying Equation \ref{eq:Extension_degree1} would work equally well. The following proposition results from \cite{RhodesThompson1}.

\begin{prop}
    For all $\theta\in\bigl[0,\pi/4\bigr]$ and all $x\in\mathbf{R}$, the limit
    \[
    \text{transl}\left(\theta\right):=\lim_{n\rightarrow+\infty}\frac{\widetilde{T_{\theta}}^{\circ n}\left(x\right)}{n}=\lim_{n\rightarrow+\infty}\frac{\widetilde{T_{\theta}}^{\circ n}\left(x\right)-x}{n}
    \]
    exits and does not depend on $x$. It is called ``translation number". Its projection on the circle $\mathbf{R}/\mathbf{Z}$ $\text{rot}(\theta):=\text{transl}\left(\theta\right)\text{mod }1$ is called ``rotation number".
\end{prop}

There are three special cases where we can compute explicitly the translation number.

\textbf{Case 1: $\text{transl}\left(\theta\right)=0$.} For $\theta>\arctan\left(16/17\right)\approx43.26^{\circ}$, $$\lim_{x\rightarrow\left(2\tan\theta-1\right)^{-}}T_{\theta}\left(x\right)<2\tan\theta-1,$$ hence the entire interval $\left]0,1\right[$ is mapped into $\left]0,2\tan\theta-1\right[$. Therefore, $\widetilde{T_{\theta}}$ has a fixed point, whence its translation number is zero. The same happens for $\theta\in\bigl[0,\widetilde{\theta}\bigr]\cup\left\{\pi/4\right\}$.

\vspace{1\baselineskip}

\textbf{Case 2: $\text{transl}\left(\theta\right)=1$.} For $\widetilde{\theta}<\theta<\arctan\left(13/21\right)\approx31.76^{\circ}$, $$\lim_{x\rightarrow\left(2\tan\theta-1\right)^{+}}T_{\theta}\left(x\right)>2\tan\theta-1,$$ hence the entire interval $\left]0,1\right[$ is mapped into $\left]2\tan\theta-1,1\right[$. Therefore, $T_{\theta}$ admits a fixed point $\widehat{x}\in\left[2\tan\theta-1,1\right[$, and for all $n\in\mathbf{N}$, $$\widetilde{T_{\theta}}^{\circ n}\left(\widehat{x}\right)=\widehat{T_{\theta}}^{\circ n}\left(\widehat{x}\right)+n=\widehat{x}+n.$$ This proves $\text{transl}\left(\theta\right)=1$.

\vspace{1\baselineskip}

\textbf{Case 3: $\text{transl}\left(\theta\right)=1/2$.} For $\arctan\left(7/11\right)<\theta<\arctan\left(11/12\right)$ i.e. approximately $32.47^{\circ}<\theta<42.51^{\circ}$, $$\lim_{x\rightarrow0^{+}}T_{\theta}\left(x\right)>2\tan\theta-1$$ and $$\lim_{x\rightarrow1^{-}}T_{\theta}\left(x\right)<2\tan\theta-1.$$ Thus, the two intervals $\left]0,2\tan\theta-1\right[$ and $\left]2\tan\theta-1,1\right[$ are mapped one into the other. Therefore, $T_{\theta}$ has a periodic point $x$ of period $2$. For this point, we have $\widetilde{T_{\theta}}^{\circ2}\left(x\right)=x+1$, which gives, for all $n\in\mathbf{N}$, $\widetilde{T_{\theta}}^{\circ2n}\left(x\right)=x+n$. Consequently, $$\text{transl}\left(\theta\right)=\lim_{n\rightarrow+\infty}\frac{\widetilde{T_{\theta}}^{\circ2n}\left(x\right)}{2n}=\lim_{n\rightarrow+\infty}\frac{x+n}{2n}=\frac{1}{2}.$$

\begin{figure}[ht]
\centering
\includegraphics[scale=0.3]{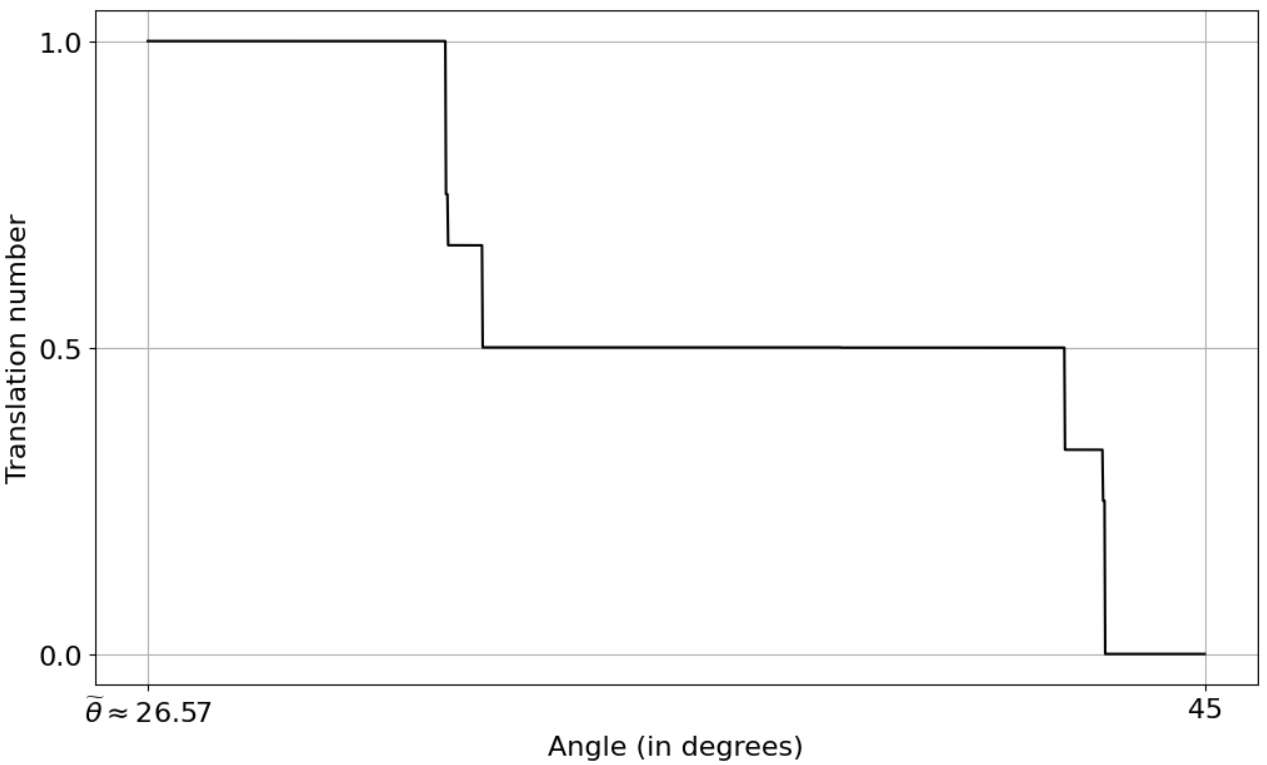}
\caption{\centering Graph of the function transl. The only rational values of the translation number that are clearly visible are $0$, $1/3$, $1/2$, $2/3$, and $1$. The values $1/4$ and $3/4$ are barely visible. This picture prefigures Proposition \ref{prop:Dim_Hausdorff}}
\label{fig:Graphe_transl_num}
\end{figure}

The following theorem describes the behavior of the geodesics in $\mathcal{S}$ according to their associated rotation number. It can be deduced from the results in \cite{RhodesThompson1} (see also \cite{Brette}). Since Corollary \ref{Limitepetitangle} describes completely the behavior of the geodesics for $\theta\in\bigl[0,\widetilde{\theta}\bigr]\cup\left\{\pi/4\right\}$, we restrict the statement to $\theta\in\bigl]\widetilde{\theta},\pi/4\bigr[$.

\begin{thm}
\label{limitedeuxinterv}
    Let $\theta\in\bigl]\widetilde{\theta},\pi/4\bigr[$ and $\text{rot}(\theta)$ be the associated rotation number. Let $\delta$ be a regular geodesic that starts at $\left]A,B\right[$ with angle $\theta$. Then,
    \begin{itemize}
        \item if $\theta$ lies in the interior of $\text{rot}^{-1}\left(\mathbf{Q}\right)$, the omega-limit set of $\delta$ is a closed geodesic;
        \item if $\theta$ lies in the boundary of $\text{rot}^{-1}\left(p/q\right)$ for some $p/q\in\mathbf{Q}$, then the omega-limit set of $\delta$ is a saddle connection;
        \item if $\text{rot}(\theta)\in\mathbf{R}\setminus\mathbf{Q}$, then the omega-limit set of any continuous lift of  $\delta$ to $\widetilde{\mathcal{S}}$ is transversely a Cantor set (locally homeomorphic to the Cartesian product of a segment with a Cantor set).
    \end{itemize}
\end{thm}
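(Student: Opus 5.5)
The plan is to transfer known results on the dynamics of the one-dimensional map $T_{\theta}$ back to the geodesic $\delta$. We work with the right-continuous extension of $T_{\theta}$ as a degree-one circle map with a single discontinuity at $s_{\theta}=2\tan\theta-1$. Its lift $\widetilde{T_{\theta}}$ is nondecreasing, and both branches have slope $1/16$. By Proposition \ref{prop:echangesintervinduits} and the formulas (\ref{eq:formules_T_theta}), $T_{\theta}$ is therefore a contracting g-AIET with two intervals. We invoke the results of \cite{RhodesThompson1} (see also \cite{Brette}) on nondecreasing degree-one lifts with a jump and contracting branches, which describe the $\omega$-limit set by the rotation number.
\begin{itemize}
\item If $\text{rot}(\theta)=p/q$, there is a periodic orbit of period $q$ for one of the two extensions at $s_{\theta}$. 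Every orbit is attracted to it, since $\widetilde{T_{\theta}}^{\circ q}-p$ is a contraction of ratio $16^{-q}$ on each continuity piece.
\item If $\text{rot}(\theta)$ is irrational, $\omega(x)$ is a Cantor set, and it does not depend on $x$.
\end{itemize}

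\textbf{Rational case.} We then separate the interior of $\text{rot}^{-1}(p/q)$ from its boundary. Since $\text{rot}$ is continuous and monotone in $\theta$ (the branches translate monotonically with $\tan\theta$), $\text{rot}^{-1}(p/q)$ is a closed interval. On its interior, the attracting $q$-periodic orbit avoids the singularity $s_{\theta}$. If it did not, it would be a periodic orbit through the discontinuity, and a small perturbation of $\theta$ in one direction would destroy it and change the rotation number. We make this rigorous with the usual argument that $\theta\mapsto\widetilde{T_{\theta}}^{\circ q}(s_\theta^{\pm})-p-s_\theta$ changes sign exactly at the endpoints. Hence the periodic orbit $x^{*}$ consists of regular points, and the geodesic through $(x^{*},\theta)$ closes up after $3q$ iterations of $T$. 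Because all the $x_{3n}$ converge to the orbit of $x^{*}$ and the angles are exactly periodic, $x_{n}\to$ orbit, and the pieces of $\delta$ converge uniformly to those of the closed geodesic. The intermediate iterates $x_{3n+1},x_{3n+2}$ are affine images of $x_{3n}$ with fixed formulas, by the proof of Lemma \ref{RestrictAB}. Therefore the $\omega$-limit set of $\delta$ in $\mathcal{S}$ is that closed geodesic. At a boundary point of $\text{rot}^{-1}(p/q)$, the attracting periodic orbit passes through $s_{\theta}$ for one side-extension. The corresponding limiting trajectory is then a geodesic segment leaving $C$ and returning to $C$, i.e.\ a saddle connection, and $\omega(\delta)$ is that saddle connection.

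\textbf{Irrational case.} We lift to the dilation surface $\widetilde{\mathcal{S}}$, where the direction is preserved and the first return to a transversal is the same g-AIET $T_{\theta}$, up to the finite covering. The $\omega$-limit set of $x_{0}$ under $T_{\theta}$ is a Cantor set $K\subset[0,1]$ by \cite{RhodesThompson1}. Because the return is obtained by flowing in a fixed direction, the $\omega$-limit set of the lifted geodesic is the saturation of $K$ by flow segments of direction $\theta$. We must check that the saturation flows from the transversal to its first return without hitting singularities, except at countably many points. This gives a set locally homeomorphic to (segment)$\times K$. We also must check that the speed tending to $0$ does not matter, because we take $\omega$-limits of the image set and not of the parametrization.

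\textbf{Main obstacle.} The main obstacle is the boundary case: showing that on $\partial\,\text{rot}^{-1}(p/q)$ the limit is genuinely a saddle connection, not a closed geodesic through a regular point. I would first try to identify the endpoints explicitly as the parameters where $s_{\theta}$ is periodic for one of the two one-sided extensions.
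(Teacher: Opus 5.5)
Your proposal is correct and takes essentially the same route as the paper. The paper simply deduces the theorem from the Rhodes--Thompson (and Brette) description of $\omega$-limit sets of nondecreasing degree-one circle maps in terms of the rotation number, and your transfer back to geodesics spells out what the paper leaves to the reader: a periodic orbit avoiding $s_\theta$ gives a closed geodesic, a periodic orbit through $s_\theta$ for a one-sided extension gives a saddle connection through $C$, and in the irrational case the Cantor $\omega$-limit set is saturated by the directional foliation in $\widetilde{\mathcal{S}}$. One harmless inaccuracy: with the paper's lift the circle map also jumps at $0\equiv1$, not only at $s_\theta$, but this does not matter because $T_\theta([0,1])\subset\bigl[\tfrac{5}{8}(1-\tan\theta),\,1-\tfrac{\tan\theta}{8}\bigr]$ stays away from $0$ and $1$.
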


Now, we aim to justify that the function $\text{rot}$ is continuous. Since it clearly equals 0 on $\theta\in\bigl[0,\widetilde{\theta}\bigr]\cup\left\{\pi/4\right\}$ and $$\lim_{\theta\rightarrow\widetilde{\theta}^{+}}\text{rot}(\theta)=\lim_{\theta\rightarrow\pi/4^{-}}\text{rot}(\theta)=0,$$ it suffices to prove that $\text{rot}$ is continuous on $\bigl]\widetilde{\theta},\pi/4\bigr[$. Following \cite{RhodesThompson2}, we extend $x\mapsto \widetilde{T_{\theta}}\left(x\right)$ to a set-valued map by ``filling the gap" at the discontinuities (compare Figure \ref{fig:Graph}). More precisely, we set
\[
\widetilde{T_{\theta}}\left(2\tan\theta-1\right)=\left[\lim_{x\rightarrow\left(2\tan\theta-1\right)^{-}}T_{\theta}\left(x\right),\,T_{\theta}\left(2\tan\theta-1\right)+1\right].
\]
We observe that the map $x\mapsto \widetilde{T_{\theta}}\left(x\right)$ is increasing, and $\theta\mapsto \widetilde{T_{\theta}}\left(x\right)$ is decreasing, and that the map $\widetilde{T_{\theta}}$ depends continuously on $\theta$ if we put the Hausdorff topology on the graph of $x\mapsto \widetilde{T_{\theta}}\left(x\right)$. The following theorem then follows directly from the results proved in \cite{RhodesThompson2} (see also \cite{Brette}).

\begin{thm}
\label{Paramtheta}
The function $\text{rot}$ satisfies the following properties on $\bigl]\widetilde{\theta},\pi/4\bigr[$:
\begin{enumerate}
    \item it is continuous and non-increasing;
    \item for all $p/q\in\mathbf{Q}$ in the image of $\text{rot}$, $\text{rot}^{-1}\left(p/q\right)$ is a non-trivial interval;
    \item it reaches every irrational value at most once;
    \item it takes irrational values on a Cantor subset of $\bigl]\widetilde{\theta},\pi/4\bigr[$.
\end{enumerate}
\end{thm}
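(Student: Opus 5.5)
We treat $\theta\mapsto\widetilde{T_{\theta}}$ as a one-parameter family of degree-one, non-decreasing, set-valued circle maps, following \cite{RhodesThompson2}, and feed in the explicit formulas (\ref{eq:formules_T_theta}). Both affine branches have slope $1/16$. In $\theta$, the first branch has derivative $-\sec^{2}\theta/4$ and the second $-3\sec^{2}\theta/4$, while the discontinuity $s=2\tan\theta-1$ moves to the right. Lifting as in the definition of $\widetilde{T_{\theta}}$, we check directly that for each fixed $x$ the filled-in value $\widetilde{T_{\theta}}(x)$ is non-increasing in $\theta$, and strictly decreasing on each branch. We also check that the graph varies continuously in the Hausdorff topology. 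The rotation number of such a filled map is well defined as the common translation number of all its increasing selections.

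\textbf{Step 1: continuity and monotonicity.} Monotonicity comes from a comparison principle. If $F\le G$ pointwise (as set-valued maps, meaning every selection of $F$ lies below every selection of $G$), then $F^{n}(x)\le G^{n}(x)$ for all $n$, so $\text{transl}(F)\le\text{transl}(G)$. Hence $\text{transl}$ is non-increasing in $\theta$. For continuity we use the usual characterization. We have $\text{transl}>p/q$ if and only if $F^{q}(x)-p>x$ for all $x$, and an analogous statement holds for $<$. These strict inequalities are open conditions under Hausdorff convergence of graphs, because the graphs are compact modulo the $\mathbf{Z}$-action. This gives lower and upper semicontinuity, hence continuity. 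At the endpoints we use the three explicit cases computed above, which give $\text{transl}$ constant near $\widetilde{\theta}$ and near $\pi/4$.

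\textbf{Step 2: rational plateaus.} If $\text{rot}(\theta_{0})=p/q$, the filled map $\widetilde{T_{\theta_{0}}}$ has a $q$-periodic point, i.e. $\widetilde{T_{\theta_{0}}}^{q}(x)\ni x+p$. If $x$ and its orbit avoid the singularity, the whole composition is locally affine with slope $16^{-q}<1$, so the periodic point is attracting and persists. Since the filled graph has a vertical segment, the condition $x+p\in\widetilde{T_{\theta}}^{q}(x)$ is stable under small perturbation whenever the orbit meets the filled segment in its interior. The delicate case is when the orbit hits the endpoint of the vertical segment. There we argue one-sidedly: moving $\theta$ in one direction keeps a periodic point, so $\text{rot}^{-1}(p/q)$ contains a half-neighbourhood. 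Using the strict monotonicity in $\theta$ of each branch, we show the set $\{\theta:\exists x,\ x+p\in\widetilde{T_{\theta}}^{q}(x)\}$ is a closed interval with non-empty interior. I expect this step to be the main technical point: we must show the plateau is non-degenerate even at a boundary parameter.

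\textbf{Step 3: irrational values.} Suppose $\theta_{1}<\theta_{2}$ both had the same irrational rotation number $\rho$. Then the semiconjugacies to the rotation by $\rho$ would force $\widetilde{T_{\theta_{1}}}$ and $\widetilde{T_{\theta_{2}}}$ to have the same combinatorics. But strict decrease of $\widetilde{T_{\theta}}(x)$ in $\theta$, combined with the contraction (the minimal set is a Cantor set carrying all the dynamics), yields a strict inequality $\widetilde{T_{\theta_{2}}}^{n}<\widetilde{T_{\theta_{1}}}^{n}-\varepsilon$ that accumulates linearly. This contradicts the equality of the rotation numbers. Finally, $\text{rot}^{-1}(\mathbf{R}\setminus\mathbf{Q})$ is the complement of the union of the open interiors of the rational plateaus. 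By Steps 1–3 these plateaus are dense, pairwise disjoint, and non-degenerate, since between any two rationals in the image there are others. So the complement is closed, nowhere dense, and without isolated points, i.e. a Cantor set. It is non-empty because $\text{rot}$ is continuous and non-constant (it takes the values $0$ and $1/2$).
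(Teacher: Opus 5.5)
Your strategy is the paper's. The paper proves this theorem only by checking that the filled-in lift $\widetilde{T_{\theta}}$ is non-decreasing in $x$, decreasing in $\theta$ and Hausdorff-continuous in $\theta$, and then quoting \cite{RhodesThompson2}; you are reproving the quoted results.

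Step 1 is an acceptable sketch. Step 2 defers its only hard case, but that case can be bypassed. Suppose $\text{rot}^{-1}(p/q)=\{\theta_{0}\}$. Then $\text{transl}(\theta)>p/q$ for $\theta<\theta_{0}$ and $\text{transl}(\theta)<p/q$ for $\theta>\theta_{0}$. These give $\min\widetilde{T_{\theta}}^{q}(x)-p-x>0$, respectively $\max\widetilde{T_{\theta}}^{q}(x)-p-x<0$, for all $x$. Letting $\theta\to\theta_{0}^{\mp}$ in the graph topology forces $\widetilde{T_{\theta_{0}}}^{q}(x)=x+p$ for all $x$, which is absurd for a map of slope $16^{-q}$.

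\textbf{The genuine gap is Step 3.} A uniform deficit $\widetilde{T_{\theta_{2}}}\le\widetilde{T_{\theta_{1}}}-\varepsilon$ does not accumulate linearly under iteration.
\begin{itemize}
\item Let $d_{n}$ be the distance between the two orbits of a point. Along stretches where no discontinuity separates the orbits, $d_{n+1}=d_{n}/16+(\text{deficit at the lower point})$, so $d_{n}$ stays bounded. The contraction damps the difference rather than adding it up.
\item Growth can only come from a discontinuity falling between the two orbits, and nothing in your argument controls how often that happens.
\item More decisively, the step that produces your contradiction (strict pointwise decrease plus contraction giving a linearly growing gap) does not use irrationality. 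It fails on the interior of every rational plateau, where the same strict inequality holds yet the rotation numbers coincide. So ``same combinatorics plus strict monotonicity'' cannot by itself give a contradiction.
\end{itemize}

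Irrationality has to enter quantitatively. One way to do it:
\begin{itemize}
\item Let $\rho=\text{transl}(\theta_{1})\notin\mathbf{Q}$, and let $h$ be the non-decreasing degree-one semiconjugacy with $h\circ\widetilde{T_{\theta_{1}}}=h+\rho$.
\item Let $\delta>0$ satisfy $\widetilde{T_{\theta_{2}}}\le\widetilde{T_{\theta_{1}}}-\delta$ for the right-continuous lifts. By (\ref{eq:formules_T_theta}), $\delta=(\tan\theta_{2}-\tan\theta_{1})/4$ works.
\item Choose $x_{0}$ such that $y_{0}:=\widetilde{T_{\theta_{1}}}(x_{0})$ lies in the minimal Cantor set and is accumulated from the left by it. Then $\eta:=h(y_{0})-h(y_{0}-\delta)>0$.
\item By density of $q\rho$ mod $1$, choose integers $q\ge1$ and $p$ with $0<q\rho-p<\eta$.
\item Then $\widetilde{T_{\theta_{2}}}^{q}(x_{0})\le\widetilde{T_{\theta_{1}}}^{q-1}(y_{0}-\delta)$, and $h$ of the right-hand side equals $h(x_{0})+q\rho-\eta<h(x_{0}+p)$.
\item Hence $\widetilde{T_{\theta_{2}}}^{q}(x_{0})<x_{0}+p$, so $\text{transl}(\theta_{2})\le p/q<\rho$.
\end{itemize}
Alternatively, the paper's Rauzy--Veech renormalization gives item 3, once you check that an irrational rotation number determines the infinite induction word. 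Parameter intervals of words of length $n$ have length at most $2^{-n}$, and the rescaled position of $s$ is strictly monotone in $\theta$.

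Your proof of item 4 uses item 3 to get density of the plateaus, so it inherits this gap. Also, the closed, perfect, nowhere dense set you construct is $\text{rot}^{-1}(\mathbf{R}\setminus\mathbf{Q})$ together with the countably many plateau endpoints, where $\text{rot}$ is rational. It is not $\text{rot}^{-1}(\mathbf{R}\setminus\mathbf{Q})$ itself.
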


The upshot is that the function $\text{rot}$ is continuous from $\left[0,\pi/4\right]\text{ mod }\pi/4$ to $\left[0,1\right]\text{ mod }1$. The following Proposition says that the when $\text{rot}(\theta)\in\mathbf{R}\setminus\mathbf{Q}$, the limit Cantor set $K_{\theta}:=\bigcap_{n\in\mathbf{N}}\overline{T_{\theta}^{\circ n}([0,1]\setminus\{s\})}$ of $T_{\theta}$ is ``small".

\begin{prop}
    When $\text{rot}(\theta)\in\mathbf{R}\setminus\mathbf{Q}$, $K_{\theta}$ has zero Hausdorff dimension (and, in particular, zero Lebesgue measure).
\end{prop}

\begin{proof}
By definition, for all $n\in\mathbf{N}$, $K_{\theta}$ is covered by $T_{\theta}^{\circ n}(]0,1\setminus\{s\}[)$ up to finitely many points. But $T_{\theta}^{\circ n}(]0,1\setminus\{s\}[)$ consists of exactly $n+1$ open intervals of length at most $1/16^{n}$. Therefore, since for all $\varepsilon>0$
\[
\frac{n+1}{\left(16^{\varepsilon}\right)^{n}}\xrightarrow[n\rightarrow+\infty]{}0,
\]
the Hausdorff dimension of $K_{\theta}$ is zero.
\end{proof}

We conclude this section by proving that almost all geodesics are regular. Remember that $\Omega$ was defined as the set of $\left(x,\theta\right)$ in the phase space such that the geodesic starting from $x$ with angle $\theta$ enters the quadrilateral $\mathcal{Q}$.

\begin{lemma}
    \label{lemma:definfgen}
    For almost all $\left(x,\theta\right)\in\Omega$, the geodesic starting at $\left(x,\theta\right)$ is regular.
\end{lemma}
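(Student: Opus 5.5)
We need to show that the set of $(x,\theta)\in\Omega$ whose geodesic is irregular has measure zero. A geodesic is irregular exactly when its orbit under $T$ reaches a singularity after finitely many steps, or when it travels only a finite total length. The plan is to reduce to the first-return maps $T_\theta$ and then argue fiber by fiber in $\theta$, using Fubini.

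\textbf{Step 1: reduce to the first-return maps.} First I would show that the finite-length alternative contributes no additional irregular geodesics. By Lemma \ref{RestrictABAD} and its proof, an orbit that stays in $\left]A,B\right[\cup\left]A,D\right[$ forever has finite length. Hence a geodesic that never hits a singularity must eventually pass through $\left]B,C\right[\cup\left]C,D\right[$. After that, by the remark and Lemma \ref{RestrictAB}, it enters the regime governed by $T_\theta$ on $[A,B]$. I would check that each passage through $\left]B,C\right[\cup\left]C,D\right[$ (needed at least once per application of $T_\theta$ when $\theta\in\bigl]\widetilde\theta,\pi/4\bigr[$, and in the contracting cases by the explicit form) contributes length bounded below by a constant times the current scale. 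The scale itself decreases like $16^{-n}$, so this alone is too crude to guarantee infinite length. The cleaner route is to unfold: the unfolded geodesic is a straight Euclidean line, and it fails to be regular only if it stops at a vertex of the unfolded polygon. So irregularity is equivalent to the orbit eventually hitting a singularity. I expect this equivalence to be the main obstacle. If unfolding does not settle it, I would fall back on showing that finite-length non-singular orbits are confined to a countable union of measure-zero sets, for instance the $\theta$ for which the unfolded copies accumulate at a point on the line.

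\textbf{Step 2: the singular orbits have measure zero.} Fix a direction $\theta$ and a finite sequence of edges. The set of $x$ on a given edge whose orbit reaches a singularity after exactly $n$ steps is finite: by Proposition \ref{prop:echangesintervinduits}, each $T_\theta$ is affine on finitely many pieces, and the intermediate steps are injective affine maps, so the preimages of the finitely many vertices are finite. Taking the union over $n$ gives, for each fixed $\theta$, a countable set of bad $x$.

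\textbf{Step 3: conclude by Fubini.} Since the map $T$ changes angles only by multiples of $\pi/4$, it suffices to treat the finitely many normalized directions and the finitely many edges. For each $\theta$ the bad set is countable, so it has Lebesgue measure zero in its fiber $\{x\}$. The bad set is also measurable, being a countable union of graphs of piecewise continuous functions of $\theta$ (preimages of vertices under affine maps depending continuously on $\theta$). Fubini then shows that it has zero measure in $\Omega$. Hence for almost every $(x,\theta)$ the geodesic is regular.
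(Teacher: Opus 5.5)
\textbf{There is a genuine gap in Step 1.} Your Steps 2 and 3 are fine and match how the paper disposes of the singular geodesics: for a fixed angle, the initial points whose orbit reaches a vertex form a countable set, and Fubini finishes. The problem is Step 1. The equivalence ``irregular $\Leftrightarrow$ eventually hits a singularity'' that you hope to get from unfolding is false.

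Here is a counterexample. Take any $\theta$ for which $T_\theta$ has an attracting periodic orbit, for instance $\theta>\arctan(16/17)$, where $T_\theta$ has a fixed point. The corresponding closed geodesic, run forwards, has its speed in $\mathcal{Q}$ multiplied by $1/16$ at each return. Run \emph{backwards}, each loop is traversed $16$ times faster than the previous one. So the reversed geodesic has finite total length, never meets a vertex, and is irregular. In the unfolding, the copies of $\mathcal{Q}$ shrink to the fixed point of the contracting holonomy of the closed geodesic. That point lies on the line but is not a vertex, contradicting your claim that an unfolded line can only stop at a vertex.

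Such geodesics exist for every $\theta$ in the interior of $\text{rot}^{-1}(\mathbf{Q})$, which has full measure. So your fallback of confining them to a null set of \emph{angles} cannot work either. What is null is each $\theta$-fiber, not the set of $\theta$'s.

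\textbf{Missing idea: time reversal.} Suppose a geodesic, from some time on, exits $\mathcal{Q}$ only through $\left]A,B\right[$ and $\left]A,D\right[$, has finite length, and avoids the singularities. Then its reversal enters only through $\left]A,B\right[\cup\left]A,D\right[$. It is therefore in the regime governed by the first-return map, after normalizing the reversed angle to $[0,\pi/4]$ as in Lemma \ref{RestrictAB}.

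Read along the reversed geodesic, the crossing points form a $T_\theta$-orbit. Because the original geodesic crosses these edges infinitely often before it stops, this orbit extends infinitely far into the past, so it is bi-infinite. Its points therefore lie in $\bigcap_{n}\overline{T_\theta^{\circ n}([0,1]\setminus\{s\})}$. This set is finite when $\text{rot}(\theta)\in\mathbf{Q}$, and a Cantor set of zero Lebesgue measure when $\text{rot}(\theta)\notin\mathbf{Q}$.

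So each $\theta$-fiber of this second bad set is null, and Fubini then concludes exactly as in your Step 3.
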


\begin{proof}
    First, note that an irregular geodesic either eventually hits a singularity or only hits the edges $\left]A,B\right[$ and $\left]A,D\right[$. We treat these two cases separately. In $\Omega$, the set of points corresponding to a geodesic that hits a singularity after one step forms a finite union of smooth curves. Therefore, the set of initial conditions that lead to a geodesic hitting a singularity is a countable union of smooth curves, hence a null set. Furthermore, for the same reasons, the initial conditions that lead to a geodesic hitting a singularity in the past also constitute a null set.\\
    Now, let $\Sigma\subset\Omega$ be the set of initial conditions that lead to an irregular geodesic that does not hit a singularity, either forward or backward. For a point $\left(x,\theta\right)\in\Sigma$, let $\delta$ be the geodesic emanating from $x$ with angle $\theta$, and consider the ``backward time" geodesic $\hat{\delta}:t\mapsto\delta(-t)$. By assumption, this new geodesic is regular, since it only hits the edges $\left]B,C\right[$ and $\left]C,D\right[$ without reaching a singularity. Hence, it induces a $T_{\theta}$-orbit $\left\{y_{n}\right\}_{n\in\mathbf{N}}$ on the segment $\left]A,B\right[$ for some (unique) $\theta$. But $\delta$ intersects $\left]A,B\right[$ infinitely many times, so in fact the orbit $\left\{y_{n}\right\}_{n\in\mathbf{N}}$ extends to a $T_{\theta}$-orbit $\left\{y_{n}\right\}_{n\in\mathbf{Z}}$. Consequently, the set $\left\{y_{n}\right\}_{n\in\mathbf{Z}}$ must lie within the limit set of $T_{\theta}$ since the latter coincides with $\bigcap_{n\in\mathbf{N}}\overline{T_{\theta}^{\circ n}([0,1]\setminus\{s\})}$. If $\text{rot}(\theta)\in\mathbf{Q}$, the limit set consists of finitely many points. If $\text{rot}(\theta)\in\mathbf{R}\setminus\mathbf{Q}$, the limit set forms a Cantor set of zero Lebesgue measure. This proves that the intersection of $\Sigma$ with any segment $\theta=$constant has zero measure. Finally, Fubini's theorem allows us to draw our conclusion.
\end{proof}

\begin{rmk}
    Any two regular geodesics with the same initial angle induce the same rotation number. The rotation number can therefore be continuously extended to irregular geodesics as well. As a result, the rotation number is well-defined for each initial angle, independent of the initial point.
\end{rmk} 

\subsection{Renormalization and parameter space of $T_{\theta}$}

In this section we investigate the parameter space of $\{T_{\theta}\}$. We are particularly concerned about the subset $\mathcal{K}_{\Theta}:=\left\{\theta\in\left[0,1\right]:\text{rot}(\theta)\in\right.$ $\left.\mathbf{R}\setminus\mathbf{Q}\right\}$. The objective is to prove the following result, suggested by Figure \ref{fig:Graphe_transl_num}.

\begin{prop}
\label{prop:Dim_Hausdorff}
    The Hausdorff dimension of $\mathcal{K}_{\Theta}$ is 0.
\end{prop}

An important remark is that we can parametrize the family $\{T_{\theta}\}$ by the singularity $s(\theta)$ instead of $\theta$, without affecting the Hausdorff dimension of $\mathcal{K}_{\Theta}$, since $s$ is a Lipschitz continuous function of $\theta$. In order to prove Proposition \ref{prop:Dim_Hausdorff}, we shall consider a larger class of g-AIETs. This class, together with the renormalization process we are going to introduce, is a minor adaptation and generalization of the ones presented in \cite{BoulangerFougeronGhazouani}. However, the core of the results and proofs remains unchanged. For instance, using \cite{BoulangerFougeronGhazouani}, one could prove straightforwardly that the Lebesgue measure of $\mathcal{K}_{\Theta}$ is 0. The true novelty is to be found in Proposition \ref{prop:Dim_Hausdorff_modele}.

\begin{cor}
    For almost all $\left(x,\theta\right)$, the geodesic starting at $\left(x,\theta\right)$ is attracted by a closed geodesic.
\end{cor}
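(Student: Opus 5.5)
The corollary follows from three earlier results: Lemma \ref{lemma:definfgen}, Proposition \ref{prop:Dim_Hausdorff}, and the description of omega-limit sets in Corollary \ref{Limitepetitangle} and Theorem \ref{limitedeuxinterv}. The plan is to show that the set of "bad" initial conditions is contained in a finite union of null sets in $\Omega$, with measure taken in the coordinates $(x,\theta)$ and sets handled via Fubini.

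\emph{Step 1: discard irregular geodesics.} By Lemma \ref{lemma:definfgen}, the set of $(x,\theta)\in\Omega$ whose geodesic is irregular is null. By Lemma \ref{RestrictABAD}, after finitely many iterations of $T$ a regular geodesic crosses $\left]B,C\right[\cup\left]C,D\right[$. By Lemma \ref{RestrictAB} and the discussion after it, after finitely many further steps it reaches $\left]A,B\right[$ with an angle in $\left[0,\pi/4\right]$. Each of these finitely many steps is a piecewise smooth change of variables. Since $T$ is piecewise a diffeomorphism in $(x,\theta)$ (affine in $x$, with the angle rotated by a constant on each piece), preimages of null sets are null. Hence it suffices to treat initial conditions $(x,\theta)\in\left]A,B\right[\times\left[0,\pi/4\right]$.

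\emph{Step 2: sort by angle.} For $\theta\in\bigl[0,\widetilde{\theta}\bigr]\cup\{\pi/4\}$, Corollary \ref{Limitepetitangle} gives attraction to a $3$-cycle of $T_\theta$, that is, to a closed geodesic. For $\theta\in\bigl]\widetilde{\theta},\pi/4\bigr[$, split by $\text{rot}(\theta)$ and apply Theorem \ref{limitedeuxinterv}:
\begin{itemize}
\item If $\theta$ lies in the interior of $\text{rot}^{-1}(\mathbf{Q})$, the geodesic accumulates on a closed geodesic, which is the desired conclusion.
\item If $\theta$ lies in $\partial\,\text{rot}^{-1}(p/q)$, note that by Theorem \ref{Paramtheta}(2) each $\text{rot}^{-1}(p/q)$ is an interval, so its boundary has at most two points. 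Over countably many rationals this gives a countable set of angles.
\item If $\text{rot}(\theta)$ is irrational, then $\theta\in\mathcal{K}_{\Theta}$, which has Hausdorff dimension $0$ by Proposition \ref{prop:Dim_Hausdorff}, and in particular Lebesgue measure zero.
\end{itemize}

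\emph{Step 3: conclude with Fubini.} The exceptional set of angles, $E$, is a countable set together with $\mathcal{K}_{\Theta}$, so it is null. The exceptional set in phase space is therefore contained in the union of $\left]A,B\right[\times E$, which is null by Fubini, and the irregular set from Step 1, which is null. We must also check that "attracted by a closed geodesic" is compatible with the omega-limit statement. If $x_n$ converges to an attracting periodic orbit of $T_\theta$, the unfolded straight segments converge to the corresponding closed geodesic in $\mathcal{S}$. This uses continuity of the geodesic flow away from singularities, which holds because the periodic orbit avoids the singularity $s$ in the interior case.

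\emph{Main obstacle.} The one point needing care is the claim, in the interior case, that the limit periodic orbit does not pass through $s$ (otherwise the limit would be a saddle connection). I would verify it directly: in the interior of a rational plateau, the periodic orbit of $T_\theta$ moves continuously with $\theta$ and stays at positive distance from $s(\theta)$. Indeed, $\partial\,\text{rot}^{-1}(p/q)$ consists exactly of the parameters at which the periodic orbit collides with $s$; this is the content of the distinction drawn in Theorem \ref{limitedeuxinterv}.
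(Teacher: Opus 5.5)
Your proof is correct and is essentially the argument the paper intends; the corollary is stated without proof right after the measure-zero and Hausdorff-dimension result for $\mathcal{K}_{\Theta}$. It combines almost-everywhere regularity (Lemma \ref{lemma:definfgen}), a null set of exceptional angles (Proposition \ref{prop:Dim_Hausdorff} plus the countably many plateau endpoints), attraction to a closed geodesic for the remaining angles (Corollary \ref{Limitepetitangle} and Theorem \ref{limitedeuxinterv}), and Fubini. Your ``main obstacle'' needs no separate check, since Theorem \ref{limitedeuxinterv} already states that angles in the interior of $\text{rot}^{-1}(\mathbf{Q})$ give a closed geodesic rather than a saddle connection.
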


Let $0<\lambda,\mu\leq1/2$, and let $\mathcal{I}(\lambda,\mu)$ be the family of g-AIETs $f$ defined on some $I$, with two intervals of definition, each one mapped to the opposite end of $I$, and with scaling factors $\lambda$ and $\mu$ (refer to Figure \ref{fig:Restrict_deux_interv}).

\begin{figure}[ht]
\centering
\includegraphics[scale=0.4]{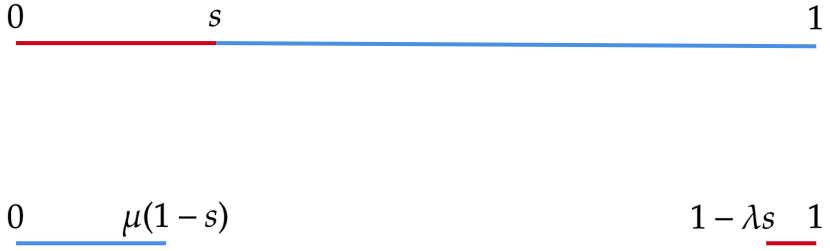}
\caption{An element of $\mathcal{I}(\lambda,\mu)$ defined on $\left[0,1\right]$}
\label{fig:Restrict_deux_interv}
\end{figure}

We shall construct a renormalization operator $\mathcal{R}$ on $\{\mathcal{I}(\lambda,\mu)\}_{0<\lambda,\mu\leq1/2}$ which is an adaptation of the well-known Rauzy-Veech induction. So let $f\in\mathcal{I}(\lambda,\mu)$ be defined on some interval $I$, and call $A$ and $B$ the left and right intervals of definition respectively. We denote by $l_{A}$ and $l_{B}$ the lengths of $A$ and $B$ respectively. We distinguish three cases:
\begin{enumerate}
    \item $B\subsetneq f(A)$, or, equivalently, $l_{B}<\lambda l_{A}$. In this situation, we consider the first return map on $A$, which belongs to $\mathcal{I}\left(\lambda,\lambda\mu\right)$. This process is called a right Rauzy-Veech induction. The length vector $(l_{A},l_{B})$ is transformed into
    \[
    \begin{pmatrix}
        l_{A}'  \\
        l_{B}' 
    \end{pmatrix}
    =R_{\lambda,\mu}
    \begin{pmatrix}
        l_{A}  \\
        l_{B}
    \end{pmatrix},
    \]
    where $R_{\lambda,\mu}:=\big(
    \begin{smallmatrix}
        1 & -1/\lambda \\
        0 & 1/\lambda
    \end{smallmatrix}
    \big)$. Note that $B\subsetneq f(A)$ is equivalent to both components of $R_{\lambda,\mu}\bigl(
    \begin{smallmatrix}
        l_{A} \\
        l_{B}
    \end{smallmatrix}
    \bigr)$ being positive.
    \item $A\subsetneq f(B)$, or, equivalently, $l_{A}<\mu l_{B}$. In this situation, we consider the first return map on $B$, which belongs to $\mathcal{I}(\lambda\mu,\mu)$. This process is called a left Rauzy-Veech induction. The length vector $(l_{A},l_{B})$ is transformed into
    \[
    \begin{pmatrix}
        l_{A}'  \\
        l_{B}' 
    \end{pmatrix}
    =L_{\lambda,\mu}
    \begin{pmatrix}
        l_{A}  \\
        l_{B}
    \end{pmatrix},
    \]
    where $L_{\lambda,\mu}:=\big(
    \begin{smallmatrix}
        1/\mu & 0 \\
        -1\mu & 1
    \end{smallmatrix}
    \big)$. Note that $A\subsetneq f(B)$ is equivalent to both components of $L_{\lambda,\mu}\bigl(
    \begin{smallmatrix}
        l_{A} \\
        l_{B}
    \end{smallmatrix}
    \bigr)$ being positive.
    \item In other cases, i.e. when $\lambda l_{A}\leq l_{B}\leq l_{A}/\mu$, $f$ has a periodic point of period $2$ and the renormalization is not defined.
\end{enumerate}

This renormalization operator $\mathcal{R}$ can be iterated until a periodic point of $f$ is found. We denote by $\mathcal{R}^{\circ n}f$ the $n$-th renormalization of $f$, if defined. Note that $f$ has a periodic orbit (which may contain the singularity) if and only if the induction eventually stops. The scaling factors of $\mathcal{R}^{\circ n}f$ are denoted $\lambda_{n}$ and $\mu_{n}$. We have $\lambda_{0}=\lambda$, $\mu_{0}=\mu$, and for $i=0,\dots,n-1$,

\[
\lambda_{i+1}=\left\{
\begin{array}{ccc}
    \lambda_{i}\mu_{i} & \text{if} & w_{i}=L  \\
    \lambda_{i} & \text{if} & w_{i}=R
\end{array}
\right.
\quad\text{and}\quad\mu_{i+1}=\left\{
\begin{array}{ccc}
    \mu_{i} & \text{if} & w_{i}=L  \\
    \lambda_{i}\mu_{i} & \text{if} & w_{i}=R
\end{array}.
\right.
\]

In addition, the length vector of $\mathcal{R}^{\circ n}f$ is obtained by multiplying the vector $(l_{A},l_{B})$ by the matrix $M_{n}$, which is defined by the induction $M_{0}=\text{Id}$ and, for $i=0,\dots,n-1$,

\[
M_{i+1}=\left\{
\begin{array}{ccc}
    L_{\lambda_{i},\mu_{i}}M_{i} & \text{if} & w_{i}=L  \\
    R_{\lambda_{i},\mu_{i}}M_{i} & \text{if} & w_{i}=R
\end{array}.
\right.
\]

In the following, we fix $\lambda$ and $\mu$, and since the interval of definition $I$ will not matter for the discussion, we fix $I=[0,1]$. We then consider the family $\{f_{s}\}_{s\in[0,1]}$, where $f_{s}$ is the element in $\mathcal{I}(\lambda,\mu)$ with singularity at $s$. To distinguish the dynamical and the parameter spaces, we denote the latter by $\mathcal{I}=[0,1]$. Denoting $\mathcal{K}$ the set of parameters $s$ such that the induction applied to $f_{s}$ never stops, we will prove the following result.

\begin{prop}
\label{prop:Dim_Hausdorff_modele}
    The Hausdorff dimension of $\mathcal{K}$ is 0.
\end{prop}

For a finite word $w$ in the alphabet $\{L,R\}$, we denote by $\mathcal{I}(w)$ the set of parameters $s$ such that the Rauzy-Veech induction defined above starts with $w$ (where $L$ and $R$ stand respectively for ``Left" and ``Right"). If $\emptyset$ denotes the empty word, $\mathcal{I}(\emptyset)$ is simply $\mathcal{I}$. Similarly, we define $\mathcal{H}(w)$ as the set of parameters $s$ such that the induction follows $w$ and stops.

\begin{lemma}
\label{lemma:Taille_trou_renorm}
    Let $w$ be word of length $n\in\mathbf{N}$. Then, if we normalize $\mathcal{I}(w)$ to $[0,1]$,
    \[
     \mathcal{H}(w)=\biggl[\frac{1}{1+\eta(w)/\mu_{n}},\frac{1}{1+\eta(w)\lambda_{n}}\biggr],
    \]
    where $\eta(w)$ is a constant depending on $w$ satisfying $1\leq\eta(w)\leq2$.
\end{lemma}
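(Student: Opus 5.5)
The plan is to exploit the fact that everything is linear in the parameter $s$. For $f_{s}\in\mathcal{I}(\lambda,\mu)$ on $[0,1]$ the length vector is $(l_{A},l_{B})=(s,1-s)$. If $s\in\mathcal{I}(w)$ with $w$ of length $n$, the length vector of $\mathcal{R}^{\circ n}f_{s}$ is $M_{n}(s,1-s)^{T}$. The matrix $M_{n}$ depends only on $w$ (and on $\lambda,\mu$), not on $s$, since the scaling factors $\lambda_{i},\mu_{i}$ are determined by the letters of $w$. So both renormalized lengths $l_{A}^{(n)}(s)$ and $l_{B}^{(n)}(s)$ are affine functions of $s$.

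\textbf{Step 1: $\mathcal{I}(w)$ is an interval on whose endpoints one renormalized length vanishes.} I would argue by induction on $n$ that $\mathcal{I}(w)$ is exactly the interval on which both renormalized lengths are positive (this is the remark in the definition of $R_{\lambda,\mu}$ and $L_{\lambda,\mu}$). Hence at one endpoint $l_{A}^{(n)}=0$, and at the other $l_{B}^{(n)}=0$. After normalizing $\mathcal{I}(w)$ affinely to $[0,1]$ with parameter $t$, and orienting so that $l_{A}^{(n)}$ vanishes at $t=0$, this gives
\[
l_{A}^{(n)}=a\,t,\qquad l_{B}^{(n)}=b\,(1-t),
\]
with $a,b>0$. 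Here $a$ and $b$ are the surviving lengths at the two endpoints. Set $\eta(w):=a/b$.

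\textbf{Step 2: compute $\mathcal{H}(w)$.} By case 3 of the induction, $\mathcal{H}(w)$ is the set where $\lambda_{n}l_{A}^{(n)}\leq l_{B}^{(n)}\leq l_{A}^{(n)}/\mu_{n}$, that is, where
\[
\lambda_{n}\eta\leq\frac{1-t}{t}\leq\frac{\eta}{\mu_{n}}.
\]
Solving for $t$ gives exactly
\[
t\in\biggl[\frac{1}{1+\eta/\mu_{n}},\frac{1}{1+\eta\lambda_{n}}\biggr],
\]
which is the formula in the statement. It remains to bound $\eta(w)$.

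\textbf{Step 3: bound $\eta$ by induction on the length of $w$ (main obstacle).} For $w=\emptyset$ we have $a=b=1$, so $\eta=1$. Consider appending a letter $R$, so the new lengths are $l_{A}-l_{B}/\lambda_{n}$ and $l_{B}/\lambda_{n}$. The new interval is $\{t:\lambda_{n}a t>b(1-t)\}$. Its endpoint $t=1$ is unchanged; there $l_{A}'=a$ but is rescaled by the relevant normalization. At the other endpoint $t_{*}=b/(\lambda_{n}a+b)$ the new $l_{B}$ equals $a t_{*}$. Keeping track of the common rescaling factor coming from the length of the new interval, the new ratio should come out as $\eta'=1+\lambda_{n}\eta$ or a symmetric expression; the $L$ case similarly produces $1+\mu_{n}\eta$ or its symmetric counterpart. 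Once the recursion is in the form $\eta\mapsto 1+c\,\eta$ with $0<c\leq1/2$, the interval $[1,2]$ is invariant: $1\leq 1+c\eta\leq 1+2c\leq2$. The bound $c\leq 1/2$ holds because all $\lambda_{i},\mu_{i}\leq1/2$, these being products of the initial factors.

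The delicate point, and the main obstacle, is the orientation bookkeeping. Which end of $\mathcal{I}(w)$ carries $l_{A}^{(n)}=0$ can flip between $L$ and $R$ steps. If the normalization is not chosen consistently, a naive computation yields $\eta/(\eta+\lambda_{n})\in(0,1)$ instead of $1+\lambda_{n}\eta$. So I would first fix conventions on $\mathcal{I}(w)$ and on the affine parametrization so that the formula of Step 2 holds with the stated orientation. Then I would verify separately, for $w\mapsto wR$ and $w\mapsto wL$, that the induced map on $\eta$ has the contracting affine form $\eta\mapsto1+c\eta$. If needed, I would strengthen the induction hypothesis to track $a$ and $b$ individually, namely the lengths of the surviving intervals at the endpoints of $\mathcal{I}(w)$, rather than just their ratio.
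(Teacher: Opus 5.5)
The paper gives no argument of its own (it defers to Boulanger--Fougeron--Ghazouani), so the question is whether your proof closes. Your Steps 1 and 2 are correct. The renormalized lengths are affine in $s$, $\mathcal{I}(w)$ is exactly where both are positive, and solving the case-3 inequalities gives the stated form with $\eta(w)=a/b$.

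The gap is Step 3, and it cannot be filled, because the bound $\eta(w)\ge 1$ is false. First, the orientation issue you worry about does not arise. In your coordinate $t$, an $R$ step keeps the right endpoint of $\mathcal{I}(w)$ and moves the left one to $t_*=b/(\lambda_n a+b)$, where the new $l_A$ vanishes. An $L$ step keeps the left endpoint, where $l_A'=l_A/\mu_n$ still vanishes, and moves the right one to $t^*=\mu_n b/(\mu_n b+a)$, where the new $l_B$ vanishes. So the convention ``$l_A^{(n)}=0$ at the left end'' (increasing $s$) is automatically preserved. With it, the exact recursions are $\eta(wR)=1/t_*=1+\lambda_n\eta(w)$ and $\eta(wL)=1-t^*=\eta(w)/(\eta(w)+\mu_n)$.

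The ``naive'' value you feared is therefore the true value for the $L$ step, and it is $<1$. Concretely, take $\lambda=\mu=1/16$ and $w=L$:
\begin{itemize}
\item $\mathcal{I}(L)=[0,1/17]$;
\item $\mathcal{R}f_s$ has lengths $(16s,1-17s)$ and factors $\lambda_1=1/256$, $\mu_1=1/16$;
\item hence $\mathcal{H}(L)=[1/273,16/273]$, which normalizes to $[17/273,272/273]$ and forces $\eta(L)=16/17$.
\end{itemize}
Reversing the orientation of the normalization does not help: it replaces $\eta$ by $\mu_n/(\lambda_n\eta)$, which here equals $17$. So no choice of conventions gives $1\le\eta(L)\le2$.

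What your method does prove is a corrected statement. Write $\xi=1/\eta$; then the $L$ step reads $\xi\mapsto1+\mu_n\xi$, the mirror image of the $R$ step. The interval $[1/2,2]$ is invariant: $R$ maps it into $]1,2]$ and $L$ maps it into $[1/2,1[$. Hence $1/2\le\eta(w)\le2$, with $\eta(w)\in[1,2]$ exactly when $w$ is empty or ends in $R$.

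This weaker bound suffices for the rest of the paper, provided the inequality stated right after the lemma is amended to $\lvert\mathcal{H}(w)\rvert/\lvert\mathcal{I}(w)\rvert\ge 1/(1+2\lambda_n)-1/(1+1/(2\mu_n))$. The version with $1/(1+1/\mu_n)$ actually fails: for $w=LL$ with $\lambda=\mu=1/16$ the true ratio is $4095/4369\approx0.937$, below the claimed $2048/2049-1/17\approx0.941$. The other consequences are unaffected. The two complementary pieces still have relative length at most $1/2$, since $\eta\ge1/2$, $\mu_n\le1/2$ and $\eta\lambda_n\le1$. The Remark that the ratio tends to $1$ if and only if both letter counts diverge also survives, since $\eta$ stays bounded above and below.
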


The proof of Lemma \ref{lemma:Taille_trou_renorm} goes exactly like in \cite{BoulangerFougeronGhazouani}. As a direct consequence, we get, for a word $w$ of length $n\in\mathbf{N}$,
\begin{equation}
\label{eq:proporI/H}
\frac{\lvert\mathcal{H}(w)\rvert}{\lvert\mathcal{I}(w)\rvert}\geq\frac{1}{1+2\lambda_{n}}-\frac{1}{1+1/\mu_{n}}.
\end{equation}
Moreover, $\mathcal{I}(w)\setminus \mathcal{H}(w)$ consists of two intervals, and if $\mathcal{J}$ is one of them, $\lvert\mathcal{J}\rvert/\lvert\mathcal{I}(w)\rvert\leq1/2$.

\begin{rmk}
    If $(w_{n})_{n\in\mathbf{N}}$ is a sequence of words of lengths $n$, then the numbers of $L$ and $R$ in $w_{n}$ both tend to infinity as $n\rightarrow+\infty$ if and only if $\lvert\mathcal{H}(w_{n})\rvert/\lvert\mathcal{I}(w_{n})\rvert\rightarrow1$.
\end{rmk}

We also define, for any $k\in\mathbf{N}$, the set $\mathcal{H}_{k}:=\bigcup_{\lvert w\rvert\leq k}\mathcal{H}(w)$, and also $\mathcal{H}:=\bigcup_{k}\mathcal{H}_{k}$. By definition, $\mathcal{K}=\mathcal{I}\setminus\mathcal{H}$. We are now able to give the proofs of Propositions \ref{prop:Dim_Hausdorff_modele} and \ref{prop:Dim_Hausdorff}.

\begin{proof}[Proof of Proposition \ref{prop:Dim_Hausdorff_modele}]
    A natural cover of $\mathcal{K}$ to consider is $\mathcal{K}_{n}:=\mathcal{I}\setminus\mathcal{H}_{n}$ for $n\in\mathbf{N}$. It is clear that $\mathcal{H}_{n+1}\setminus\mathcal{H}_{n}$ consists of $2^{n+1}$ disjoint intervals, and since $\mathcal{K}_{0}$ has two components, $\mathcal{K}_{n}$ is made of $2^{n+1}$ disjoint intervals. Let $w$ be a word of length $n$ and $\mathcal{J}$ one of the intervals intervals of $\mathcal{K}_{n}$. If $\delta_{j}^{w}$ denotes the proportion removed at the $j$-th step of the renormalization induced by $w$, the length of $\mathcal{J}$ is at most
    \[
    \prod_{j=0}^{n}(1-\delta_{j}^{w})\leq\frac{1}{2^{n+1}}.
    \]
    Now, let $l\in\mathbf{N}^{*}$, and set $\delta_{l}:=1-e^{-l^{2}}$. We consider the cover $\mathcal{K}_{lN}$ of $\mathcal{K}$ for some $N\in\mathbf{N}^{*}$, consisting of $2^{lN+1}$ intervals of length at most $1/2^{lN+1}$. There exists $n\in\mathbf{N}$ such that a finite word $w$ satisfies $\lvert H\left(w\right)\rvert/\lvert I\left(w\right)\rvert<\delta_{l}$ if and only if either the number of $L$ or the number of $R$ in $w$ is at most $n$. Therefore, if $lN\geq2n+1$, the number of words of length $lN+1$ satisfying $\lvert H\left(w\right)\rvert/\lvert I\left(w\right)\rvert<\delta_{l}$ is at most
    \[
    \sum_{j=0}^{n}\left(
    \begin{array}{c}
        lN+1 \\
        j 
    \end{array}\right)
    +\sum_{j=lN+1-n}^{lN+1}\left(
    \begin{array}{c}
        lN+1 \\
        j 
    \end{array}\right)
    =2\sum_{j=0}^{n}\left(
    \begin{array}{c}
        lN+1 \\
        j 
    \end{array}\right)=\frac{2}{n!}(lN)^{n}(1+\littleo{N\rightarrow+\infty}(1)).
    \]
    This implies that, among the $2^{(l+1)N+1}$ intervals composing $K_{(l+1)N}$, at least $2^{N}(2^{lN+1}-O(N^{n}))=2^{(l+1)N+1}(1-o(1))$ of them are smaller than $(1-\delta_{l})^{N}/2^{lN+1}$. This way, we obtain that, for any $\varepsilon>0$, the Hausdorff outer measure of dimension $\varepsilon$ bounded by $1/2^{(l+1)N+1}$ of $\mathcal{K}$ is less than
    \begin{equation}
    \label{Eq:Mes_Hausdorff}
      \frac{2^{1-\varepsilon}(lN)^{n}}{n!}(1+o(1))\left[2^{1-(l+1)\varepsilon}\right]^{N}+2^{1-\varepsilon}(1-o(1))\left[2^{l+1-l\varepsilon}(1-\delta_{l})^{\varepsilon}\right]^{N}.  
    \end{equation}
    For $\varepsilon$ larger than
    \[
    M_{l}:=\max\left(\frac{1}{l+1},\frac{(l+1)\log2}{l\log2-\log(1-\delta_{l})}\right),
    \]
    the quantity (\ref{Eq:Mes_Hausdorff}) goes to zero when $N$ tends to infinity, proving that $\text{dim}_{H}\mathcal{K}\leq M_{l}$ for all $l\in\mathbf{N}$. Finally, since $M_{l}\rightarrow0$ as $l\rightarrow+\infty$, we get $\text{dim}_{H}\mathcal{K}=0$.
\end{proof}

\begin{proof}[Proof of Proposition \ref{prop:Dim_Hausdorff}]
We restrict the parameter $\theta$ to the interval 
\[
\Theta:=[\arctan(13/21),\arctan(16/17)],
\]
otherwise $T_{\theta}$ maps the whole interval $\left[0,1\right]$ into the interior of one of the two intervals of definition, and the rotation number is 0. For any $\theta\in\Theta$, $T_{\theta}$ maps $\left[0,1\right]$ into 
\[
\left[\lim_{x\rightarrow s(\theta)^{+}}T_{\theta}\left(x\right),\lim_{x\rightarrow s(\theta)^{-}}T_{\theta}\left(x\right)\right].
\]
By restricting $T_{\theta}$ to this sub-interval, we obtain a g-AIET with two intervals, each one contracted by a factor $1/16$ (as before) and mapped to the opposite end. As a consequence, $T_{\theta}\in\mathcal{I}\left(1/16,1/16\right)$, and the result follows from Proposition \ref{prop:Dim_Hausdorff_modele}.
\end{proof}

\section{Trajectories for the corresponding vector field}

\label{sec:Chp_vect}

In this section, we study the homogeneous vector field $\mathbf{v}$ in $\mathbf{C}^{2}$ associated with $\mathcal{S}$. Recall that in suitable coordinates it is given by $\mathbf{v}\left(x,y\right)=v_{1}\left(x,y\right)\partial_{x}+v_{2}\left(x,y\right)\partial_{y}$, with
\[
\left\{
\begin{array}{l}
\displaystyle v_{1}\left(x,y\right) = -\left(\frac{\pi}{2}+i\log2\right)x^{2}+\left(\frac{3\pi}{4}-\frac{i}{2}\log2\right)xy\\[3pt]
\displaystyle v_{2}\left(x,y\right) = \left(\frac{3\pi}{2}-i\log2\right)xy-\left(\frac{5\pi}{4}+\frac{i}{2}\log2\right)y^{2}
\end{array}
\right.
.
\]

The three characteristic directions are $\left\{x=0\right\}$, $\left\{y=0\right\}$ and $\left\{x=y\right\}$. Recall also that $g:\mathbf{C}^{2}\setminus\{(0,0)\}\rightarrow\widehat{\mathbf{C}}$ denotes the projection defined by $g\left(x,y\right)=x/y=:z$. Moreover, Theorem \ref{thm:AbateTovena} gives a meromorphic affine structure on $\widehat{\mathbf{C}}$, denoted $\mathcal{S}_{\mathbf{v}}$, such that $g$ maps trajectories of $\mathbf{v}$ to geodesics of $\mathcal{S}_{\mathbf{v}}$. We justified in section \ref{sec:Intro} that $\mathcal{S}_{\mathbf{v}}$ and $\mathcal{S}$ are equivalent as meromorphic affine structures. With this identification, the singularities at $A$ (or $C$), $B$ and $D$ correspond respectively to $1$, $0$ and $\infty$. To deal with the singularity at infinity, it may be necessary to consider the projection $1/g$ rather than $g$, or, in other words, to look at $\widehat{\mathbf{C}}$ in the coordinate $w=1/z$.

We also defined the polynomials $P(x,y)=x\,v_{2}(x,y)-y\,v_{1}(x,y)$, and $p(z)=P(x,y)/y^{3}=2\pi z(z-1)$. Let $\gamma=(\gamma_{1},\gamma_{2})$ be a trajectory for $\mathbf{v}$, and $\delta:=g\circ\gamma=\gamma_{1}/\gamma_{2}$ be its projection under $g$.  Following \cite{BuffRaissy}, for a vector field of degree $k$,
\begin{equation}
\label{eq:egal_gamma2}
\left(\gamma_{2}(t)\right)^{k-1}=f(t):=-\frac{\delta'(t)}{p(\delta(t))}.
\end{equation}
Note that $p$ has simple roots at 0 and 1. In consequence, if $s$ is 0 or 1, then $p(\delta(t))\sim p'(s)(\delta(t)-s)$ when $\delta(t)$ is close to $s$. However, if $s$ is $\infty$, then, working in the $w$ coordinate we get $p(1/\delta(t))\sim1/\pi\delta(t)^{2}$. We are now ready to prove Proposition \ref{prop:accumulation}, whose statement we recall.

\begin{repprop}{prop:accumulation}
    The accumulation set of any regular trajectory contains the origin.
\end{repprop}

\begin{proof}
    As previously established, the speed of regular geodesics tends to zero. However, it is clear that such a geodesic has to leave any (small enough) neighborhood of any singularity. This gives a sequence $(t_{k})_{k\in\mathbf{N}}$ such that $(f(t_{k}))_{k}$ tends to 0 as $k$ tends to infinity. Therefore, by (\ref{eq:egal_gamma2}), $\gamma_{2}(t_{k})\rightarrow0$, and since $\delta(t_{k})=\gamma_{1}(t_{k})/\gamma_{2}(t_{k})$ is far from $\infty$, $(\gamma_{1}(t_{k}))_{k}$ must also tend to 0 as $k$ tends to infinity.
\end{proof}

\begin{lemma}
    If a geodesic in $\mathcal{S}$ accumulates $B$ or $D$, then it must converge to it. As a consequence, geodesics in $\widehat{\mathbf{C}}$ either are bounded or escape to infinity.
\end{lemma}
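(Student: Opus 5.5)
The plan is to read the statement off the dynamics of the first-return map $T_\theta$ rather than from a local analysis near the cone points. Write $\omega(x_0)$ for the set of accumulation points of the boundary hits $(x_n)$, which for a regular geodesic coincides with the omega-limit set of the orbit of $x_0$ under $T_\theta$. The key observation is geometric. A geodesic segment in $\mathcal{Q}$ joining two points of $\partial\mathcal{Q}$ is a straight segment. Since $\mathcal{Q}$ is convex near $B$ and near $D$, such a segment can only come close to $B$ (resp. $D$) if at least one of its endpoints lies close to $B$ (resp. $D$) on an adjacent edge. Those edges are $]A,B[$ and $]B,C[$ for $B$, and $]C,D[$ and $]A,D[$ for $D$. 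So it suffices to show two things: for a geodesic that does not converge to a singularity, the successive boundary points stay at a distance from $B$ and $D$ bounded below; and geodesics that do converge to a singularity are harmless.

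First I dispose of the non-regular cases. If the geodesic hits a singularity in finite time, there is nothing to prove. If it is irregular and only meets $]A,B[$ and $]A,D[$, then the proof of Lemma~\ref{RestrictABAD} shows that its unfolded image has finite length. Hence it converges to a single point, and the geodesic converges (possibly to $A=C$, $B$ or $D$). Otherwise, by Lemmas~\ref{RestrictABAD} and~\ref{RestrictAB}, after finitely many steps we may assume the geodesic is regular, starts on $]A,B[$, and has angle $\theta\in[0,\pi/4]$. Its hits on $]A,B[$ every third step are then the orbit $T_\theta^{\circ n}(x_0)$. The intermediate hits $x_{3n+1},x_{3n+2}$ on $]B,C[\cup]C,D[$ and on $]A,D[$ are obtained from $x_{3n}$ by the explicit affine maps composing $T_\theta$, whose branches are given in the proof of Proposition~\ref{prop:echangesintervinduits}.

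The main step is to show that $\omega(x_0)$ is a compact subset of the open interval $]0,1[$, with $B$ corresponding to the endpoint $1$. For $\theta\in[0,\widetilde\theta]\cup\{\pi/4\}$, this is Corollary~\ref{Limitepetitangle}: the attracting $3$-cycle avoids the endpoints. For $\theta\notin\Theta$, the whole interval is mapped into the interior of one branch domain, and by \eqref{eq:formules_T_theta} the image stays away from $0$ and $1$. For $\theta\in\Theta$, $T_\theta([0,1])$ is contained in $[\lim_{s^+}T_\theta,\lim_{s^-}T_\theta]$, and one checks from \eqref{eq:formules_T_theta} that this interval lies strictly inside $]0,1[$. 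Since every $T_\theta^{\circ n}(x_0)$ with $n\ge1$ lies there, the same holds for the limit set, whatever the type given by Theorem~\ref{limitedeuxinterv}. Pushing this compact set forward through the finitely many affine pieces that produce $x_{3n+1}$ and $x_{3n+2}$, I then get compact subsets of the open edges $]B,C[$, $]C,D[$ and $]A,D[$, where the pieces must be taken with their continuous extensions at the singular point $s$. The main obstacle is exactly this bookkeeping: I must check, branch by branch, that no intermediate image touches $B$ or $D$. When $x_{3n}$ is close to $s$, the intermediate point is close to $C$, which is fine, but I must verify that it is not close to $B$ or $D$. I would verify this directly in the unfolded picture of Figure~\ref{fig:Triangle_deplie}, using that $s(\theta)$ is sent to $C$.

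Once all boundary hits eventually stay at a fixed positive distance from $B$ and $D$, so do all the straight segments joining them. Hence a geodesic that accumulates $B$ or $D$ must fall into one of the excluded cases, i.e.\ it converges to that point. For the consequence, recall that under the identification of $\mathcal{S}_{\mathbf{v}}$ with $\mathcal{S}$, the points $B$ and $D$ correspond to $0$ and $\infty$, and $A$ corresponds to $1$. A geodesic in $\widehat{\mathbf{C}}$ that does not converge to $\infty$ therefore stays outside a neighbourhood of $\infty$ from some time on, so it is bounded. Otherwise it escapes to infinity.
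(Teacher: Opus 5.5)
\textbf{There is a genuine gap: the irregular case.} From the finite length of the unfolded geodesic you conclude that ``the geodesic converges (possibly to $A=C$, $B$ or $D$)''. That inference is false. The developed path does converge in the plane, but the successive copies of $\mathcal{Q}$ in the unfolding shrink geometrically. Recovering the point of $\mathcal{Q}$ therefore means undoing an unbounded rescaling: the speed measured in $\mathcal{Q}$ blows up, and infinitely many edges are crossed in finite time.

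Here is a concrete counterexample. For $\theta\in[0,\widetilde{\theta}]$, take the fixed point of the contraction $T_{\theta}$ and the closed geodesic through it, traversed backwards. It never hits a singularity, only exits through $]A,B[\cup]A,D[$, and has finite lifetime. Yet it runs around that closed curve infinitely many times, so it does not converge. Irregular geodesics are thus not a harmless excluded case: you still have to show they do not approach $B$ or $D$. Your final step (``it must fall into one of the excluded cases, i.e.\ it converges'') rests on the false claim.

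\textbf{The missing idea is the time reversal the paper uses.} By the Remark, a geodesic that once exits through $]B,C[\cup]C,D[$ does so forever. So an irregular geodesic avoiding the singularities exits through $]A,B[\cup]A,D[$ during its whole maximal life, and its reversal is regular. As its own time increases, it runs through an infinite \emph{backward} $T_{\theta}$-orbit $\cdots\mapsto y_{-2}\mapsto y_{-1}\mapsto y_{0}$. Your $\omega$-limit analysis looks at the wrong end of this orbit. What you need is a bound on the image $T_{\theta}([0,1])$, since every point of a backward orbit lies in it.

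The formula \eqref{eq:formules_T_theta} gives such a bound uniformly: $T_{\theta}([0,1])\subset[0,15/16]$ for all $\theta\in[0,\pi/4]$. Indeed, on the first branch (which only occurs when $\tan\theta>1/2$) one has $T_{\theta}\le 1-\tan\theta/8$, and on the second $T_{\theta}\le 3/4$. This is the paper's bound $x_{3}\le 15/16$. It handles regular and irregular geodesics at once, and removes the need for the case split through Corollary \ref{Limitepetitangle}.

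With this bound, the bookkeeping you postponed goes through. Write $t=\tan\theta$.
\begin{itemize}
\item If $x_{0}>s$, the next entry lies on $]A,B[$ at distance $(1-x_{0})/(2(1-t))$ from $B$. So intermediate entries also occur on $]A,B[$, not only on $]A,D[$ as you wrote. The following entry lies on $]A,D[$ at distance at least $(1-t)/(2(1+t))$ from $D$. This is positive for fixed $\theta$, and the branch is empty when $\theta=\pi/4$.
\item If $x_{0}<s$, the two intermediate entries are at distance at least $1/2$ from $D$ and at least $1/6$ from $B$.
\end{itemize}
Exits on $]B,C[$ and $]C,D[$ are glued to entries on $]A,B[$ and $]A,D[$ by maps fixing $B$ and $D$. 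Your convexity argument then finishes the proof.
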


\begin{proof}
    Let $\delta$ be a geodesic in $\mathcal{S}$ that does not hit a singularity. It suffices to prove the Lemma for $\delta$ regular, as the backward-time geodesic of an irregular geodesic is regular. From what precedes, after a while it only hits the edges $\left]B,C\right[$ and $\left]C,D\right[$. Assume, without loss of generality, that $\delta$ emanates from $x_{0}\in[A,B]$ with angle $\theta_{0}\in[0,\pi/4]$. The only way for $\delta$ to get close to $D$ is that $x_{0}$ is close to $B$, $\theta_{0}$ close to $\pi/4$, and $\delta$ hits $[B,C]$. Hence it is enough to prove that $\delta$ cannot accumulate $B$. If $\theta_{0}\leq\widetilde{\theta}$ or if $x_{0}\geq2\tan(\theta)-1$, then, by the second formula in (\ref{eq:formules_T_theta}), $T_{\theta_{0}}(x_{0})=x_{3}=(x_{0}-1)/16+1-(3\tan\theta_{0}+1)/4$. Because this function is increasing in $x_{0}$ and decreasing in $\theta_{0}$, its maximum is attained at $x_{0}=1$ and $\theta_{0}=0$. This gives $x_{3}\leq3/4$. On the other hand, if $\theta_{0}\geq\widetilde{\theta}$ and $x_{0}\leq2\tan(\theta_{0})-1$, then, by the first formula in (\ref{eq:formules_T_theta}) and the same argument as before, $x_{3}\leq15/16$. In any case, $x_{3}$ is far from $B$. Now, it is clear that $x_{4}$ will be farther from $B$. Also, $\theta_{2}$ is non-positive, and hence $x_{2}$ is farther from $B$ as well. All this together implies that $\delta$ cannot accumulate $B$.
\end{proof}

Now, our objective is to prove Theorem \ref{thm:main}. To achieve this, we examine the connection between the rotation number of a geodesic and the accumulation set of its associated trajectory of $\mathbf{v}$. Let $\theta\in[0,\pi/4]$, $\delta$ be a regular geodesic in $\mathcal{S}$, $\left\{x_{n}\right\}_{n\in\mathbf{N}}$ the induced infinite forward $T_{\theta}$-orbit, and $\gamma=(\gamma_{1},\gamma_{2})$ the corresponding trajectory. Define $\left(t_{n}\right)_{n\in\mathbf{N}}$ by $t_{0}=0$ and $\delta(t_{n})=x_{n}$, and let $v_{n}=\delta'(t_{n})=\lambda^{n}\delta'(t_{0})$ be the speed vector of $\delta$ at time $t_{n}$. Our goal is to describe the omega-limit set of the sequence $(f(t_{n}))_{n}$. By Lemma \ref{RestrictABAD}, $\delta'(t_{n})\rightarrow0$ as $n\rightarrow+\infty$. The crucial tool is the following general lemma about geodesics near a conical singularity, i.e. a singularity where the real part of the residue is less than 1.

\begin{lemma}
    \label{lem:Lemme_gen_sing_con}
    Let $\mathbf{v}$ be a homogeneous and non-dicritical vector field of degree $k\geq2$, and $\mathcal{S}$ be the induced meromorphic affine structure on $\widehat{\mathbf{C}}$. Assume 0 is a conical singularity of $\mathcal{S}$ with residue $1-\mu$, where $0<\operatorname{Re}(\mu)<1$. Let $\varepsilon>0$ be small enough and so that 0 is the only singularity of $\mathcal{S}$ inside $\mathbf{D}(0,\varepsilon)$. Assume also that we have two sequences of complex numbers $\{\zeta_{n}\}$ and $\{u_{n}\}$ tending to zero, and such that $u_{n}/\zeta_{n}\rightarrow l\in\widehat{\mathbf{C}}$. For each $n$, let $\delta_{n}:I_{n}\rightarrow\mathbf{D}(0,\varepsilon)$ be the maximal geodesic satisfying $\delta_{n}(0)=\zeta_{n}$ and $\delta_{n}'(0)=u_{n}$. Let also $\gamma_{n}:=(\gamma_{n}^{1},\gamma_{n}^{2})$ be the corresponding pieces of trajectories (meaning $\delta_{n}=\pi\circ\gamma_{n}$), and $\{t_{n}\}$ a sequence ``converging" to $t\in\mathbf{R}\cup\{\infty\}$ with $t_{n}\in I_{n}$ for all $n$. Then,
    \begin{enumerate}
        \item $\gamma_{n}^{1}(t_{n})\rightarrow0$;
        \item if $t\neq\infty$, $\left(\gamma_{n}^{2}(t_{n})\right)^{k-1}\rightarrow-l/p'(0)(\mu lt+1)$, with the convention $1/\infty=0$;
        \item if $t=\infty$ and $t_{n}+\zeta_{n}/\mu u_{n}\rightarrow x$, $x$ is necessarily in $\mathbf{R}\cup\{\infty\}$ and $\left(\gamma_{n}^{2}(t_{n})\right)^{k-1}\rightarrow-1/p'(0)\mu x$.
    \end{enumerate}
\end{lemma}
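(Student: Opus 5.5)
We will work in a local developing coordinate near $0$. Since $0$ is a singularity with residue $1-\mu$, there is a holomorphic chart $w$ near $0$ in which the affine developing map is $\Phi(w)=w^{\mu}$ exactly; this is a standard normal form for a Fuchsian pole of the non-linearity with $\mu\notin\mathbf{Z}_{\le 0}$, obtained by absorbing the holomorphic part of $\mathcal{N}$ into a change of coordinate tangent to the identity. In the chart $\Phi$ geodesics are straight lines parametrized at constant speed. We therefore write $\Phi(\delta_n(t))=\Phi(\zeta_n)+t\,\Phi'(\zeta_n)u_n$. To first order $\Phi'(\zeta_n)u_n=\mu\zeta_n^{\mu-1}u_n$. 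This gives the explicit formula
\[
\delta_n(t)^{\mu}\approx\zeta_n^{\mu}\Bigl(1+\mu t\,\tfrac{u_n}{\zeta_n}\Bigr),
\]
up to the correction factors $1+O(w)$, which become negligible because everything stays in $\mathbf{D}(0,\varepsilon)$ with $\zeta_n\to 0$.

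\textbf{Step 1: computing $\gamma^2$.} By (\ref{eq:egal_gamma2}), $(\gamma_n^2)^{k-1}=-\delta_n'/p(\delta_n)$. Differentiating the straight-line relation gives $\mu\delta_n^{\mu-1}\delta_n'=\mu\zeta_n^{\mu-1}u_n$. Since $p(\delta_n)\sim p'(0)\delta_n$, we get
\[
\frac{\delta_n'}{\delta_n}=\frac{\zeta_n^{\mu-1}u_n}{\delta_n^{\mu}}\approx\frac{u_n/\zeta_n}{1+\mu t\,u_n/\zeta_n}.
\]
\emph{Item 2.} If $t_n\to t\neq\infty$, this tends to $l/(1+\mu l t)$, with the stated conventions when $l=\infty$ or the denominator vanishes. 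That yields $(\gamma_n^2(t_n))^{k-1}\to -l/\bigl(p'(0)(\mu lt+1)\bigr)$.
\emph{Item 3.} If $t_n\to\infty$, we rewrite the quotient as $1/\bigl(\mu(t_n+\zeta_n/(\mu u_n))\bigr)$, which tends to $1/(\mu x)$. This is exactly the formula in item 3.

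\textbf{Step 2: why $x$ is real.} We use that $t_n\in I_n$ is real and that the geodesic stays inside the disc. The quantity $1+\mu t_n u_n/\zeta_n$ equals $(\delta_n(t_n)/\zeta_n)^{\mu}$, which has modulus controlled by $\varepsilon/|\zeta_n|$. Writing $t_n+\zeta_n/(\mu u_n)=\bigl(\zeta_n/(\mu u_n)\bigr)\cdot(\delta_n/\zeta_n)^{\mu}$ with $t_n\to\infty$, we must have $\zeta_n/u_n\to 0$ or the remaining factor growing. The imaginary part of $t_n+\zeta_n/(\mu u_n)$ is $\operatorname{Im}(\zeta_n/(\mu u_n))$, because $t_n$ is real. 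So a finite non-real limit $x$ would force $\operatorname{Im}(\zeta_n/(\mu u_n))\to\operatorname{Im}x\neq 0$ while $t_n\to\infty$. In the $\Phi$-chart this means the straight line passes at a distance from $\Phi(0)=0$ comparable to $|\zeta_n|^{\operatorname{Re}\mu}$ times a bounded factor, yet has travelled a time $t_n\to\infty$ at speed $\sim|\zeta_n^{\mu-1}u_n|$. The displacement is then $\gg|\zeta_n|^{\operatorname{Re}\mu}$, but it is bounded by $\varepsilon^{\operatorname{Re}\mu}$. Making this comparison precise, and handling the multivaluedness of $w^\mu$ by working in the universal cover of the punctured disc (the cone of angle $2\pi\operatorname{Re}\mu<2\pi$ plus dilation), should give $\operatorname{Im}x=0$. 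The same reasoning shows that only the convergent configurations named in the statement occur.

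\textbf{Step 3: $\gamma_n^1\to 0$.} We have $\gamma_n^1=\delta_n\gamma_n^2$. From Step 1, $|\gamma_n^2|^{k-1}\lesssim|\delta_n'/\delta_n|$, so $|\gamma_n^1|^{k-1}\lesssim|\delta_n|^{k-2}|\delta_n'|$. Since $|\delta_n^{\mu-1}\delta_n'|=|\zeta_n^{\mu-1}u_n|$, this bound is of order $|\delta_n|^{k-1-\operatorname{Re}\mu}|\zeta_n^{\mu-1}u_n|$, up to the bounded factor $e^{\pm\pi|\operatorname{Im}\mu|}$ coming from arguments. With $|\delta_n|\le\varepsilon$, the exponent $k-1-\operatorname{Re}\mu>0$, and $|\zeta_n|^{\operatorname{Re}\mu-1}|u_n|\to 0$ when $l$ is finite, this tends to $0$. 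The case $l=\infty$ requires splitting by the size of $\delta_n(t_n)$ and using the bound on the speed from the limiting formulas, and we treat it separately.

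\textbf{Main obstacle.} The main obstacle will be the uniformity of the error terms $1+O(w)$ and of the branch of $w^\mu$, i.e. the complex dilation part $\operatorname{Im}\mu$, along geodesics that wind around the cone point. This matters especially in item 3, where the realness argument must be made robust.
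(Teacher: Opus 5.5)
\textbf{The gap is in Step 2, and it cannot be closed, because the realness assertion in item 3 is false as stated.} Steps 1 and 3 (for finite $l$) follow the paper's route: develop at the cone point with $\varphi(\zeta)=\zeta^{\mu}(1+O(\zeta))$, write the geodesic as $\eta_n(t)=z_n+v_nt$, get $\delta_n'/\delta_n\sim 1/(\mu(t_n+z_n/v_n))$, and conclude with $(\gamma_n^2)^{k-1}=-\delta_n'/p(\delta_n)$ and $p(\delta)\sim p'(0)\delta$.

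Since $t_n\in\mathbf{R}$, you correctly note $\operatorname{Im}(t_n+z_n/v_n)=\operatorname{Im}(z_n/v_n)$. Moreover $|\operatorname{Im}(z_n/v_n)|\,|v_n|$ is exactly the distance from $0$ to the developed line. A finite $x$ forces $l=0$, hence $|v_n|=o(|z_n|)$. So for non-real $x$ that distance is about $|\operatorname{Im}x|\,|v_n|=o(|z_n|)$, not comparable to $|\zeta_n|^{\operatorname{Re}\mu}$. Likewise the displacement $t_n|v_n|=|\eta_n(t_n)-z_n|\le|z_n|+|v_n|\,|t_n+z_n/v_n|$ is $O(|z_n|)$, not $\gg|z_n|$. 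There is no tension with the disc constraint: the segment runs from $z_n$ toward the cone point and stops near its closest approach.

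Here is an explicit counterexample in the model chart $\varphi(z)=z^{1/2}$ ($\mu=1/2$). Take $\zeta_n=r_n^2$ with $r_n\to0$, $c\in\mathbf{R}^{*}$, $T_n\to+\infty$, $v_n=r_n/(-T_n+ic)$, $u_n=2r_nv_n$ and $t_n=T_n$.
\begin{itemize}
\item $\delta_n(t)=(r_n+v_nt)^2$, and for $0\le t\le T_n$ the point $r_n+v_nt$ stays in $\{|w|\le r_n\}$.
\item It never vanishes, since $-r_n/v_n=T_n-ic\notin\mathbf{R}$; hence $t_n\in I_n$.
\item $u_n/\zeta_n\to0$, while $t_n+\zeta_n/(\mu u_n)=ic$ exactly, and $\delta_n'(t_n)/\delta_n(t_n)=2/(ic)=1/(\mu x)$.
\end{itemize}
For a general chart $\zeta^{\mu}(1+O(\zeta))$ the same choice works if $T_n\zeta_n\to0$.

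The paper's justification has the same defect. From $|z_n\sin\theta_n|\le|v_nt_n+z_n|=O(|v_n|)$ one only gets that $\operatorname{Im}(z_n/v_n)$, whose modulus is $|z_n\sin\theta_n|/|v_n|$, stays bounded. What your computation actually proves is the limit $-1/(p'(0)\mu x)$ for an arbitrary limit $x$, subject to the caveats below. Realness would need a hypothesis the lemma does not contain.

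\textbf{Further problems.} Your claim that the $1+O(w)$ factors are negligible is too quick in two places, and the deferred case $l=\infty$ cannot be done from the stated hypotheses.
\begin{itemize}
\item In item 3, the relative error $O(\zeta_n)$ between $\zeta_n/(\mu u_n)$ and the chart quantity $z_n/v_n$ is multiplied by $|\zeta_n/u_n|\approx|\mu|t_n\to\infty$. It is harmless only if $t_n\zeta_n\to0$; otherwise Step 1 controls $t_n+z_n/v_n$, not $t_n+\zeta_n/(\mu u_n)$. The paper's ``$v_n/z_n=\mu u_n/\zeta_n$'' makes the same slip.
\item When $\delta_n(t_n)\not\to0$ you cannot use $p(\delta_n)\sim p'(0)\delta_n$. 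This case must be split off as in the paper, where it forces $t=\infty$, $\delta_n'(t_n)\to0$ and $x=\infty$.
\item Both your Step 3 and the paper's argument (``$T|v_n|\to0$'') need $v_n=\varphi'(\zeta_n)u_n\to0$, which is automatic only for finite $l$. Without it item 1 fails. Take $\varphi(z)=z^{1/2}$, $\zeta_n=r_n^2$, $v_n=1$ (so $u_n=2r_n$ and $l=\infty$) and $t_n=\sqrt{\varepsilon}/2$. Then $\delta_n(t_n)\to\varepsilon/4$, and $\gamma_n^1(t_n)=\delta_n(t_n)\gamma_n^2(t_n)$ stays away from $0$.
\end{itemize}
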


\begin{proof}
Let $\varphi$ be an affine chart from a neighborhood of $0$ in $\mathcal{S}$ to a neighborhood of $0$ in the sector $V:=\left\{z\in\mathbf{C}:\arg z\in\left[0,2\pi\operatorname{Re}(\mu)\right)\right\}$. Let also, for $j=\lfloor-1/2\pi\operatorname{Re}(\mu)\rfloor,\ldots,\lceil1/2\pi\operatorname{Re}(\mu)\rceil$, $\varphi_{j}:=e^{2ij\pi\mu}\varphi$, so that the total angle covered by the $\varphi_{j}$'s is at least $2\pi$. Let $\eta_{n}$ be the piece of geodesic defined in $V$ by $\eta_{n}=\varphi\circ\delta_{n}$, and extended by the $\varphi_{j}$'s if necessary. Then, for $t\in I_{n}$, $\eta_{n}(t)=v_{n}t+z_{n}$, where $z_{n}=\varphi(\zeta_{n})$ and $v_{n}=\varphi'(\zeta_{n})u_{n}$. Since, for any $j$,
\[
\varphi_{j}(\zeta)=e^{2ij\pi\mu}\varphi(\zeta)=e^{2ij\pi\mu}\zeta^{\mu}(1+O(\zeta))
\]
and
\[
\varphi_{j}'(\zeta)=e^{2ij\pi\mu}\varphi'(\zeta)=e^{2ij\pi\mu}\mu\zeta^{\mu-1}(1+O(\zeta)),
\]
we have that $v_{n}/z_{n}=\mu u_{n}/\zeta_{n}$. Note that the assumption $\operatorname{Re}(\mu)>0$ is essential, since it ensures that $z\mapsto z^{\mu}$ maps 0 to 0. Moreover,

\begin{equation}
    \label{Eq:Vitesse_sur_dist_carte_aff}
    \frac{\eta_{n}'(t_{n})}{\eta_{n}(t_{n})}=\frac{v_{n}}{v_{n}t_{n}+z_{n}}=\mu\frac{\delta_{n}'(t_{n})}{\delta_{n}(t_{n})}(1+O(\delta_{n}(t_{n}))).
\end{equation}

If $\delta_{n}(t_{n})\nrightarrow0$, then necessarily $t=\infty$. Indeed, the distance traveled by $\eta_{n}$ from time 0 to $T>0$ is $T\lvert v_{n}\rvert\rightarrow0$, which implies that $\eta_{n}(T)\rightarrow0$, and hence $\delta_{n}(T)\rightarrow0$, for any fixed $T>0$. In this case, Equation \ref{Eq:Vitesse_sur_dist_carte_aff} implies that $\delta_{n}'(t_{n})\rightarrow0$, whence $\gamma_{n}^{1}(t_{n}),\left(\gamma_{n}^{2}(t_{n})\right)^{k-1}\rightarrow0$. Let us check that this fits into case 3 with $x=\infty$. As mentioned above, the distance traveled by $\eta_{n}$ from time 0 to $t_{n}>0$ is $t_{n}\lvert v_{n}\rvert$, which implies that $t_{n}$ is equivalent, up to a multiplicative constant, to $1/\lvert v_{n}\rvert$ when $n$ tends to infinity. However, since $\zeta_{n}$ and $z_{n}$ go to 0 by assumption, $\lvert\zeta_{n}/\mu u_{n}\rvert=\lvert z_{n}/v_{n}\rvert=o(1/\lvert v_{n}\rvert)$, and therefore $t_{n}+\zeta_{n}/\mu u_{n}$ must go to infinity.

Now, if $\delta_{n}\left(t_{n}\right)\rightarrow0$, we automatically have $\gamma_{n}^{1}(t_{n})\rightarrow0$. Assume first that $t=\infty$ and $t_{n}+\zeta_{n}/\mu u_{n}\rightarrow x\in\widehat{\mathbf{C}}$. If $l\neq0$, then necessarily $x=\infty$, and the conclusion follows. If $l=0$, the only way for $x$ to be finite is that $\lvert v_{n}t_{n}+z_{n}\rvert=O(\lvert v_{n}\rvert)$. If $v_{n}$ makes an angle $\theta_{n}$ with the radial direction from $z_{n}$, we have that $\lvert v_{n}t_{n}+z_{n}\rvert\geq\lvert z_{n}\sin\theta_{n}\rvert$. But we have assumed that $\lvert v_{n}\rvert=o(\lvert z_{n}\rvert)$, so $\theta_{n}$ has to go to $0\text{ mod }\pi$. From this we conclude easily that $x\in\mathbf{R}\cup\{\infty\}$. As a consequence of Equation \ref{Eq:Vitesse_sur_dist_carte_aff}, 
\[
\frac{\delta_{n}'\left(t_{n}\right)}{\delta_{n}\left(t_{n}\right)}\isEquivTo{n\rightarrow+\infty}\frac{1}{\displaystyle\mu\left(t_{n}+\frac{z_{n}}{v_{n}}\right)}\longrightarrow\frac{1}{\displaystyle\mu x},
\]
and by Equation \ref{eq:egal_gamma2}, $\left(\gamma_{n}^{2}(t_{n})\right)^{k-1}\rightarrow-1/p'(0)\mu x$.

Finally, if $t\neq\infty$, we apply Equations \ref{Eq:Vitesse_sur_dist_carte_aff} and \ref{eq:egal_gamma2} to get
\[
\frac{\delta_{n}'\left(t_{n}\right)}{\delta_{n}\left(t_{n}\right)}\isEquivTo{n\rightarrow+\infty}\frac{1}{\displaystyle\mu\left(t_{n}+\frac{z_{n}}{v_{n}}\right)}\longrightarrow\frac{1}{\displaystyle\mu\left(t+\frac{1}{\mu l}\right)}=\frac{l}{\mu tl+1}
\]
and
\[
\left(\gamma_{n}^{2}(t_{n})\right)^{k-1}\longrightarrow-\frac{l}{p'(0)(\mu lt+1)}
\]
as desired.
\end{proof}

\begin{cor}
\label{cor:cercle_limite}
    Let $\delta$ be a regular geodesic in $\mathcal{S}$, $\left\{x_{n}\right\}_{n\in\mathbf{N}}$ the induced $T_{\theta}$-orbit, and $\gamma$ the corresponding trajectory. Define $\left(t_{n}\right)_{n\in\mathbf{N}}$ by $t_{0}=0$ and $\delta(t_{n})=x_{n}$, and let $v_{n}=\delta'(t_{n})=\lambda^{n}\delta'(t_{0})$ be the speed vector of $\delta$ at time $t_{n}$.\\
    If the sequence $(v_{n}/x_{n})_{n\in\mathbf{N}}$ admits a subsequential limit $l\in\widehat{\mathbf{C}}$, then necessarily $l\in(\widehat{\mathbf{C}}\setminus\mathbf{R})\cup\{0\}$. Moreover, if $l$ is finite, the omega-limit set of $\gamma$ contains the Euclidean circle $\{l/2\pi\mu_{0}(lt+1),t\in\mathbf{R}\cup\{\infty\}\}$ (inside $L_{0}$). If $l=\infty$, the omega-limit set of $\gamma$ contains the line directed by $1/\mu_{0}$ (inside $L_{0}$).
\end{cor}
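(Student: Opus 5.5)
The plan is to apply Lemma \ref{lem:Lemme_gen_sing_con} at the conical singularity corresponding to the points $A$ (and $C$). In the coordinate $z$ on $\widehat{\mathbf{C}}$ this is the characteristic direction $L_{0}$, with $\mu=\mu_{0}$ and $p'(0)=-2\pi$. We use $\zeta_{n}:=x_{n}$ and $u_{n}:=v_{n}$, both read in a local coordinate centred at the singularity. Both sequences tend to $0$: the speeds because $\lambda^{n}\to0$ (Lemma \ref{RestrictABAD}), and the points because the orbit of $T_{\theta}$ on $]A,B[$ is contracted toward the vertex. If $x_{n}$ does not tend to $0$ along the subsequence, the speed-to-position ratio $v_{n}/x_{n}$ is simply $0$, and that degenerate case is handled by the same lemma (case 3 with $x=\infty$). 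We pass to the subsequence along which $v_{n}/x_{n}\to l$. For each $n$ we take $\delta_{n}$ to be the piece of $\delta$ restarted at time $t_{n}$, so $\delta_{n}(s)=\delta(t_{n}+s)$, restricted to the maximal interval on which it stays in $\mathbf{D}(0,\varepsilon)$. The corresponding pieces of trajectory are $\gamma_{n}(s)=\gamma(t_{n}+s)$.

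\textbf{Step 1: $l\notin\mathbf{R}\setminus\{0\}$.} Suppose $l$ were real and nonzero. In the affine chart $\varphi$ of the lemma's proof we have $v_{n}/z_{n}=\mu_{0}u_{n}/\zeta_{n}\to\mu_{0}l$. The geodesic $\eta_{n}(s)=v_{n}s+z_{n}$ passes through $0$ at time $s=-z_{n}/v_{n}\to-1/(\mu_{0}l)$. Because $\operatorname{Im}\mu_{0}\neq0$, this limiting time is not real, which at first sight is no contradiction. To make the argument work I would therefore use the constraint coming from the actual geometry. The points $x_{n}$ lie on the real segment $]A,B[$, and the directions $\theta\in[0,\pi/4]$ are fixed, so the ratio $v_{n}/z_{n}$ measured in the flat chart has argument bounded away from $0\bmod\pi$. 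I would then translate this back through $\varphi'(\zeta)=\mu_{0}\zeta^{\mu_{0}-1}(1+O(\zeta))$ to see that $u_{n}/\zeta_{n}$ stays away from the nonzero reals. I expect this step to be the main obstacle: one has to check carefully that the factor $\mu_{0}$ and the phase coming from $\zeta^{\mu_{0}-1}$ combine correctly, or equivalently that a real nonzero $l$ would force the geodesic to hit the singularity, contradicting regularity.

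\textbf{Step 2: $l$ finite.} Fix $t\in\mathbf{R}$. For each fixed $t$ the time $t$ lies in $I_{n}$ for large $n$, since the travelled distance $t\lvert v_{n}\rvert$ tends to $0$. The lemma's case 2 then gives
\[
\gamma^{1}(t_{n}+t)\to0,\qquad \gamma^{2}(t_{n}+t)\to\frac{l}{2\pi(\mu_{0}lt+1)}\qquad(k=2),
\]
and the point $t=\infty$ is obtained as a limit. Because $k=2$, the second coordinate is obtained directly with no root to extract. Rescaling $t$ by the constant $\mu_{0}$ turns this set of limits into the stated Möbius image of $\mathbf{R}\cup\{\infty\}$, namely $\{l/2\pi\mu_{0}(lt+1)\}$ after reparametrisation. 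Since $l\notin\mathbf{R}\setminus\{0\}$, this image is a genuine Euclidean circle through $0$ inside $L_{0}$. Every point of it is a limit of points of $\gamma$, so the circle lies in the omega-limit set.

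\textbf{Step 3: $l=\infty$.} Here we use case 3 of the lemma with $t_{n}$ replaced by times $t_{n}+s_{n}$ chosen so that $s_{n}+\zeta_{n}/(\mu_{0}u_{n})\to x$ for an arbitrary prescribed $x\in\mathbf{R}$. Such times exist because $\zeta_{n}/u_{n}\to0$, so $s_{n}\to x$ works. The lemma then yields the limits $-1/(p'(0)\mu_{0}x)=1/(2\pi\mu_{0}x)$. As $x$ ranges over $\mathbf{R}\cup\{\infty\}$, these limits fill the line through $0$ directed by $1/\mu_{0}$, which completes the proof.
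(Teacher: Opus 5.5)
There is a genuine gap, and it comes from how you feed the data into Lemma \ref{lem:Lemme_gen_sing_con}. The $x_{n}$ and $v_{n}=\lambda^{n}\delta'(t_{0})$ of the statement are flat quantities. $x_{n}$ is a point of $]A,B[\subset\mathcal{Q}$, and $v_{n}$ is the velocity in the unfolded polygon, which is why it is multiplied by $\lambda$ at each return. So they play the role of the lemma's $z_{n}=\varphi(\zeta_{n})$ and $v_{n}=\varphi'(\zeta_{n})u_{n}$, with $\varphi$ the isomorphism $\mathcal{S}_{\mathbf{v}}\to\mathcal{S}$, and not the role of $\zeta_{n},u_{n}$.

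Reading $v_{n}/z_{n}=\mu_{0}u_{n}/\zeta_{n}$ in that direction gives $u_{n}/\zeta_{n}\to l/\mu_{0}$. Case 2 of the lemma with $l/\mu_{0}$, $k=2$ and $p'(0)=-2\pi$ then gives directly $\gamma^{2}\to\frac{l}{2\pi\mu_{0}(lt+1)}$. With your identification $u_{n}/\zeta_{n}\to l$, you obtain instead the limits $\frac{l}{2\pi(\mu_{0}lt+1)}$, $t\in\mathbf{R}$.

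Your proposed ``rescaling of $t$ by $\mu_{0}$'' cannot turn one set into the other. Since $\mu_{0}=\frac{5}{8}+\frac{i\log2}{4\pi}$ is not real, $t\mapsto\mu_{0}t$ does not preserve $\mathbf{R}$. Even formally, the substitution produces $\frac{l}{2\pi(lt'+1)}$, without the factor $1/\mu_{0}$. Taking reciprocals, your set becomes the line $2\pi/l+\mu_{0}\mathbf{R}$ and the stated one becomes $2\pi\mu_{0}/l+\mu_{0}\mathbf{R}$. These lines are parallel but in general distinct. So Step 2 as written shows that $\gamma$ accumulates on a different circle from the one claimed.

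The same misidentification is why Step 1 looks like an obstacle to you. The first claim concerns the flat ratio $v_{n}/x_{n}$ itself, and it is immediate. $]A,B[$ is the segment from $0$ to $-i$, on the imaginary axis, so $x_{n}\in-i\,]0,1[$ while $\arg v_{n}=\theta\in[0,\pi/4]$. Hence $\arg(v_{n}/x_{n})=\theta+\pi/2\in[\pi/2,3\pi/4]$ stays away from $0\bmod\pi$, which is exactly the paper's one-line argument. No passage through $\varphi'$ is required.

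Two minor points. First, the $T_{\theta}$-orbit is not contracted toward the vertex; it accumulates on a cycle or on $K_{\theta}$. As you note, though, $x_{n}\not\to0$ only yields $l=0$, where the ``circle'' degenerates to the origin. That case is covered by Proposition \ref{prop:accumulation}, not by the lemma, which assumes $\zeta_{n}\to0$. Second, in Step 3, with $s_{n}\to x$ finite you are in case 2 of the lemma with $l=\infty$, not case 3. The output $\frac{1}{2\pi\mu_{0}x}$ is the same, and since $\infty/\mu_{0}=\infty$, Step 3 survives the correction above.
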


\begin{proof}
Note first that, since $\theta\in[0,\pi/4]$, the sequence $(v_{n}/x_{n})_{n\in\mathbf{N}}$ cannot accumulate a non-zero real number. This implies in particular that case 3 of Lemma \ref{lem:Lemme_gen_sing_con} can only happen with $x=\infty$. Then, using Lemma \ref{lem:Lemme_gen_sing_con} and its proof, the sequences $(x_{n})_{n\in\mathbf{N}}$ and $(v_{n})_{n\in\mathbf{N}}$ in $\mathcal{S}$ correspond to sequences $(\zeta_{n})_{n\in\mathbf{N}}$ and $(u_{n})_{n\in\mathbf{N}}$ in $\mathcal{S}_{\mathbf{v}}$ respectively. The role of the affine chart $\varphi$ is played by the isomorphism between $\mathcal{S}$ and $\mathcal{S}_{\mathbf{v}}$. Moreover, $u_{n}/\zeta_{n}\rightarrow l/\mu_{0}$ when $n$ tends to infinity. We can then apply Lemma \ref{lem:Lemme_gen_sing_con} to $(\zeta_{n})_{n\in\mathbf{N}}$ and $(u_{n})_{n\in\mathbf{N}}$, and use the fact that $p'(0)=-2\pi$.
\end{proof}

This Corollary is a powerful tool to show the existence of circles in the omega-limit set of $\gamma$. In the following, $\lambda=1/16$ stands for the scaling ratio, and recall that $G=]a_{0},b_{0}[:=]\lambda\left(1-s\right),1-\lambda s[$ is the ``gap" at the image of $T_{\theta}$. It will be useful to see $T_{\theta}$ as a circle map by identifying $0$ and $1$, and to interpret the image of $0\,(\text{mod }1)$ as the gap $G:=]a_{0},b_{0}[:=]\lambda\left(1-s\right),1-\lambda s[$. Now, let $\theta\in\bigl]\widetilde{\theta},\pi/4\bigr[$, and $x_{0}\in[0,1]$ have a well defined orbit $(x_{n})_{n\in\mathbf{N}}$ under $T_{\theta}$. We define $\Lambda_{x_{0}}$ as the set of subsequential limits of $\left(\lambda^{n}/(x_{n}-s)\right)_{n}$. Note that $\Lambda_{x_{0}}$ always contains 0. The following two Propositions give a description of $\Lambda_{x_{0}}$ depending on the rotation number of $\theta$.

\begin{prop}
\label{prop:Val_adh_rot_rat}
The following holds for $\Lambda_{x_{0}}$, assuming $\text{rot}(\theta)\in\mathbf{Q}\,(\text{mod }1)$:
    \begin{enumerate}
        \item if $\theta\in\text{int}\left(\text{rot}^{-1}\mathbf{Q}\right)$, then $\Lambda_{x_{0}}=\left\{0\right\}$;
        \item if $\text{rot}(\theta)=p/q$ and $\theta\in\partial\,\text{rot}^{-1}(p/q)$, then $\Lambda_{x_{0}}=\left\{0,l\right\}$ for some $l\in\mathbf{R}^{*}$.
    \end{enumerate}
\end{prop}

\begin{proof}
\textbf{Case 1.} In this situation, $\left\{x_{n}\right\}_{n\in\mathbf{N}}$ is attracted by a periodic cycle, so it stays away from the singularities. Hence $f\left(t_{n}\right)$ goes to $0$ as $n$ tends to $+\infty$. Thus, $\gamma_{2}(t)\rightarrow0$, but, since $\delta(t)=\gamma_{1}(t)/\gamma_{2}(t)\nrightarrow0$, we also have $\gamma_{1}(t)\rightarrow0$. Note that the condition $\theta\in\text{int}\left(\text{rot}^{-1}\mathbf{Q}\right)$ is indeed generic with respect to the Lebesgue measure.

\textbf{Case 2.} Here we have a saddle connection: $s=s(\theta)$ is periodic of period $q$ and for one of the two possible extensions of $T_{\theta}$ through $s$. This situation is similar to the previous one, and near the other points of the cycle $f$ tends to 0. However, in a neighborhood of $s$, the sequence $f\left(t_{n}\right)$ can be understood with the renormalization process discussed above. After less than $q$ steps, we are reduced to the schematic picture described in Figure \ref{fig:Renorm_sing_per}. Therefore, it is easy to see that for some $k\in\mathbf{N}$, the subsequence $\left(v_{k+qn}/x_{k+qn}\right)_{n\in\mathbf{N}}$ is constant, and that $l=v_{k}/x_{k}$ is the only non-zero subsequential limit of $\left(v_{n}/x_{n}\right)_{n\in\mathbf{N}}$.

\end{proof}

\begin{figure}[ht]
\centering
\includegraphics[scale=0.3]{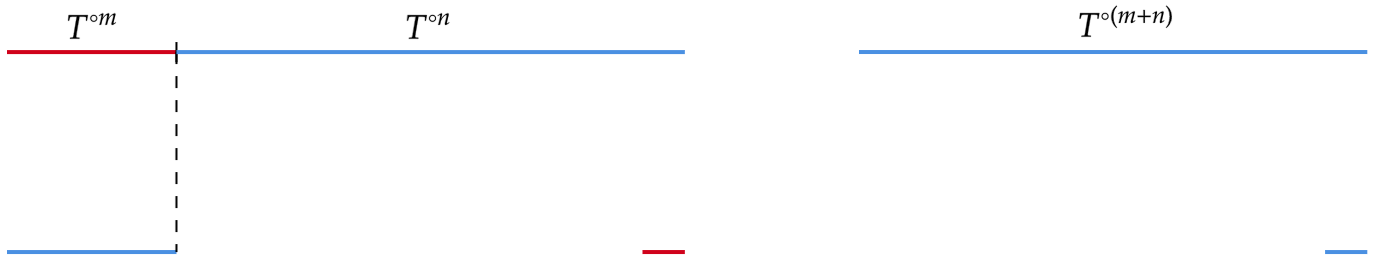}
\caption{Renormalization when the singularity is ``periodic", with period $q=m+n$}
\label{fig:Renorm_sing_per}
\end{figure}

\begin{prop}
\label{prop:Val_adh_rot_irrat}
    Let $\theta\in[0,\pi/4]$ with $\text{rot}(\theta)\in\mathbf{R}\setminus\mathbf{Q}$ and $x_{0}\in[0,1]$. The following holds for $\Lambda_{x_{0}}$, depending on $x_{0}$:
    \begin{enumerate}
        \item if $x_{0}\in K_{\theta}$ is not a boundary point of a gap, then $\Lambda_{x_{0}}=\left\{0,\infty\right\}$;
        \item if $x_{0}=a_{k}$ (respectively $x_{0}=b_{k}$), then $\Lambda_{x_{0}}=\left\{0,\infty,-1/\lambda^{k}(1-\lambda)\right\}$ (respectively $\Lambda_{x_{0}}=\left\{0,\infty,\right.$ $\left.1/\lambda^{k}(1-\lambda)\right\}$;
        \item if $x_{0}\in T_{\theta}^{\circ k}(G)$, then $\Lambda_{x_{0}}=\left\{0,1/(x_{0}-a_{k}),1/(x_{0}-b_{k})\right\}$.
    \end{enumerate}
\end{prop}

Since the rotation number of $T_{\theta}$ is irrational, $T_{\theta}$ is semi-conjugated to the rotation by $\theta$, and the closure of $G$ corresponds to the second iterate of $s$. Since $\theta$ is irrational, $s\notin\bigcup_{n\geq0}\,T_{\theta}^{\circ n}\left(\overline{G}\right)$. Note that $G=\text{int}\left(\left[0,1\right]\setminus T_{\theta}\left(\left[0,1\right]\right)\right)$, and therefore $K_{\theta}=\left[0,1\right]\setminus\bigcup_{n\geq0}\,T_{\theta}^{\circ n}\left(G\right)$. For the proof of Proposition \ref{prop:Val_adh_rot_irrat}, we will need a series of Lemmas.

\begin{lemma}
\label{lem:AdhK}
     $K_{\theta}=\overline{\left\{T_{\theta}^{-n}\left(s\right),n\in\mathbf{N}\right\}}$.
\end{lemma}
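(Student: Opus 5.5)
We prove the two inclusions separately. Throughout, $\theta$ has irrational rotation number, so by Theorem \ref{limitedeuxinterv} and Theorem \ref{Paramtheta} the map $T_{\theta}$ has no periodic orbit, and its limit set is the Cantor set $K_{\theta}=[0,1]\setminus\bigcup_{n\geq0}T_{\theta}^{\circ n}(G)$. We regard $T_{\theta}$ as a circle map (identifying $0$ and $1$). It is semi-conjugated by a monotone continuous degree-one map $h$ to the irrational rotation $R$ of angle $\text{rot}(\theta)$. This $h$ collapses exactly the closures of the gaps $T_{\theta}^{\circ n}(G)$ to points, and is injective on $K_{\theta}$ away from gap endpoints.

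\textbf{Inclusion $\supseteq$.} We first show that every preimage $T_{\theta}^{-n}(s)$ lies in $K_{\theta}$. Suppose instead that $y=T_{\theta}^{-n}(s)$ lies in some gap $T_{\theta}^{\circ m}(G)$. The gaps are mapped into gaps, since $T_{\theta}(T_{\theta}^{\circ m}(G))=T_{\theta}^{\circ m+1}(G)$, unless a gap contains $s$. This would give $s\in T_{\theta}^{\circ(m+n)}(G)$, contradicting the stated fact that $s\notin\bigcup_{n}T_{\theta}^{\circ n}(\overline{G})$. Some care is needed with the one-sided extension at $s$, and with the fact that a gap containing $s$ is excluded for the same reason. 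Since $K_{\theta}$ is closed, the closure of the backward orbit of $s$ is contained in $K_{\theta}$.

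\textbf{Inclusion $\subseteq$.} This is the main step. Take $x\in K_{\theta}$ and an open interval $J\ni x$; we must find $n$ with $T_{\theta}^{-n}(s)\in J$. Because $x\in K_{\theta}$ and $K_{\theta}$ has no isolated points, $J$ meets $K_{\theta}$ in infinitely many points not collapsed together by $h$. Hence $h(J)$ is a non-trivial arc of the circle. By minimality of the irrational rotation, the backward $R$-orbit of $h(s)$ is dense, so there is $n$ with $R^{-n}(h(s))$ in the interior of $h(J)$. It remains to lift this back: we need a point $y\in J$ with $T_{\theta}^{\circ n}(y)=s$.

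Here lies the obstacle. The set $h^{-1}(R^{-n}h(s))$ is either a single point of $K_{\theta}$ or the closure of a gap. In the gap case the preimage of $s$ might seem not to exist. We rule this out as follows. A gap closure $T_{\theta}^{\circ m}(\overline{G})$ maps under $R^{n}$ to $h$ of $T_{\theta}^{\circ(m+n)}(\overline{G})$. If this equals $h(s)$, then $s$ would lie in a gap closure, which is excluded. So the fibre is a single point $y\in K_{\theta}$, and semi-conjugacy gives $h(T_{\theta}^{\circ n}(y))=h(s)$. Since the fibre over $h(s)$ is also the single point $s$ (again because $s$ is not in a gap closure), we get $T_{\theta}^{\circ n}(y)=s$.

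Finally, $y\in J$ because $h(y)$ lies in the interior of $h(J)$ and $h$ is monotone. Alternatively, we could avoid the semi-conjugacy altogether. In that route one uses the renormalization of the previous subsection: the intervals of continuity of $T_{\theta}^{\circ n}$ are delimited by the preimages $T_{\theta}^{-j}(s)$ for $j<n$, and each carries contraction at least $\lambda^{n}=16^{-n}$. So any interval avoiding all preimages of $s$ is mapped homeomorphically with exponentially shrinking images, and eventually lands inside a single gap, or else the intervals accumulate. This would show that such an interval lies in $\bigcup_{n}T_{\theta}^{-n}(\text{gaps})$ and hence misses the interior of $K_{\theta}$ near $x$. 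I would use the semi-conjugacy argument and keep this one as a backup.
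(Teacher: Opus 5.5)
\textbf{There is a genuine gap, in the inclusion $\subseteq$.} Your inclusion $\supseteq$ is fine; the paper does not even write it out. The trouble is that the whole $\subseteq$ argument rests on your assertion that the semi-conjugacy $h$ ``collapses exactly the closures of the gaps'' and is injective on $K_{\theta}$ away from gap endpoints. You state this but do not prove it.

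It is not a harmless standard add-on to the existence of $h$. Given $h$, it is equivalent to the lemma:
\begin{itemize}
\item if preimages of $s$ are dense in $K_{\theta}$, every open interval meeting $K_{\theta}$ contains infinitely many of them, and their images $R^{-n}h(s)$ are pairwise distinct;
\item conversely, your lifting step deduces the lemma from the property.
\end{itemize}
A telling symptom is that your main route never uses the contraction factor $\lambda=1/16$. Without contraction the analogous statement is false for general injective circle maps $T$ with one gap and irrational rotation number. Blowing up a second rotation orbit into Denjoy-type wandering intervals $W_{n}$, $n\in\mathbf{Z}$, produces intervals inside $[0,1]\setminus\bigcup_{n}T^{\circ n}(G)$ that $h$ collapses and that contain no preimage of the discontinuity. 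Every other fact you invoke still holds in such an example: $K$ has no isolated points, $s$ lies outside the gap closures, and $\{R^{-n}h(s)\}$ is dense. So the asserted property of $h$ is exactly where the content of the lemma sits.

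\textbf{How to repair it.} The missing ingredient is the contraction, and it can be supplied in two ways.

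Within your framework, use a pull-back length argument. Suppose a fibre $F_{c}=h^{-1}(c)$ is a nondegenerate interval with $c\notin\{R^{m}h(\overline{G}):m\geq0\}$. Then no $F_{R^{-n+1}c}$ ($n\geq1$) meets $\overline{G}\subseteq F_{h(\overline{G})}$. Hence $T_{\theta}$ maps $F_{R^{-n}c}$ bijectively onto $F_{R^{-n+1}c}$, so $|F_{R^{-n}c}|=16^{n}|F_{c}|$, which is absurd. The same computation started from $F_{h(\overline{G})}\supseteq\overline{G}$ gives lengths $16^{n}\bigl(|F_{h(\overline{G})}|-|G|\bigr)$. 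This forces $F_{h(\overline{G})}=\overline{G}$, hence $F_{R^{m}h(\overline{G})}=T_{\theta}^{\circ m}(\overline{G})$, and all fibres over the backward orbit of $h(s)=R^{-2}h(\overline{G})$ are points. With this, your lifting argument goes through.

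The paper instead avoids $h$ altogether:
\begin{itemize}
\item If $I_{\varepsilon}\ni x$ contained no preimage of $s$, all its iterates would be defined, of length $\lambda^{j}|I_{\varepsilon}|$.
\item $I_{\varepsilon}$ contains some gap $T_{\theta}^{\circ n}(G)$.
\item Since $x\in K_{\theta}$ is accumulated by gaps, infinitely many later gaps $T_{\theta}^{\circ N}(G)$ lie in $I_{\eta}$.
\item So $T_{\theta}^{\circ(N-n)}(I_{\varepsilon})$ is a very short interval meeting $I_{\eta}$, hence contained in $I_{\varepsilon}$.
\item This produces a periodic point, which is impossible for irrational rotation number.
\end{itemize}
Your backup sketch points in this direction but lacks exactly this recurrence-plus-fixed-point step. ``Eventually lands inside a single gap, or else the intervals accumulate'' is neither justified nor turned into a contradiction.
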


\begin{proof}
    It suffices to prove that for any $x\in K_{\theta}$ and any $\varepsilon>0$, there exists $y\in I_{\varepsilon}:=\left]x-\varepsilon,x+\varepsilon\right[$ and $n\in\mathbf{N}$ so that $T_{\theta}^{\circ n}\left(y\right)=s$. So, let $x\in K_{\theta}$ and $\varepsilon>0$. Remember that $\bigcup_{n\geq0}\,T_{\theta}^{\circ n}\left(G\right)$ is dense in $\left[0,1\right]$, and by injectivity of $T_{\theta}$, $I_{\varepsilon}$ must contain $T_{\theta}^{\circ n}\left(G\right)$ for some $n\in\mathbf{N}$. Let $\eta>0$ be small enough. Again, there exist infinitely many $N>n$ such that $T_{\theta}^{\circ N}\left(G\right)\subset I_{\eta}$, hence satisfying
    \begin{equation}
    \label{eq:sizeGeta}
        \left\lvert T_{\theta}^{\circ\left(N\right)}\left(G\right)\right\rvert=\lambda^{N}\left(1-\lambda\right)<2\eta.
    \end{equation}
    Now assume that $I_{\varepsilon}$ does not contain any preimage of $s$. Then, its infinite orbit is well defined, and $T_{\theta}^{\circ\left(N-n\right)}\left(I_{\varepsilon}\right)\cap I_{\eta}\neq\emptyset$. In fact, $\lvert T_{\theta}^{\circ\left(N-n\right)}\left(I_{\varepsilon}\right)\rvert=\lambda^{N-n}\varepsilon<\left(\varepsilon-\eta\right)$ if $\eta$ is small enough and $N$ large enough. But this implies that $I_{\varepsilon}$ is mapped into itself by $T_{\theta}^{\circ\left(N-n\right)}$, which is not possible since $T_{\theta}$ has no periodic point.
\end{proof}

In the following, we will think of $T_{\theta}$ as acting on the circle $S^{1}\cong\mathbf{R}/\mathbf{Z}$ equipped with the counter-clockwise orientation. Therefore, an interval $[x,y]$ in $S^{1}$ is the arc traveled when moving counter-clockwise from $x$ to $y$.

\begin{definition}
    Let $x\in S^{1}$. A preimage $T_{\theta}^{-n}(s)$ of $s$ is called minimal for $x$ if $\{T_{\theta}^{-k}(s):k=0,$ $\dots,n-1\}$ is contained in $[x,T_{\theta}^{-n}(s)]$ or in $[T_{\theta}^{-n}(s),x]$.
\end{definition}

In other words, a preimage of $s$ is minimal if one of the intervals $[x,T_{\theta}^{-n}(s)]$ and $[T_{\theta}^{-n}(s),x]$ can be iterated $n$ times without hitting the singularity.

\begin{lemma}
\label{lem:exist_infty}
    If $x_{0}\in K_{\theta}$, then there exists a subsequence $(x_{n_{k}})_{k\in\mathbf{N}}$ such that
    \[
    \frac{\lambda^{n_{k}}}{\left\lvert x_{n_{k}}-s\right\rvert}\xrightarrow[k\rightarrow+\infty]{}+\infty.
    \]
\end{lemma}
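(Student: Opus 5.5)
The plan is to use Lemma \ref{lem:AdhK} together with the notion of minimal preimages to produce times $n_k$ at which $x_{n_k}$ lies extremely close to $s$, closer than $\lambda^{n_k}$ by an arbitrarily large factor.

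First, since $x_0\in K_\theta$ and $K_\theta$ is the closure of the backward orbit of $s$, I would pick preimages $T_\theta^{-m}(s)$ accumulating on $x_0$. Among these I would select the \emph{minimal} ones for $x_0$: say $T_\theta^{-m}(s)$ is minimal and, without loss of generality, lies in $[x_0,T_\theta^{-m}(s)]$ orientation. Then the arc $J=[x_0,T_\theta^{-m}(s)]$ contains no point $T_\theta^{-j}(s)$ with $j<m$ in its interior, except possibly at endpoints. Consequently $T_\theta^{\circ j}$ is affine on $J$ for $j\le m$, so $T_\theta^{\circ m}(J)=[x_m,s]$ has length $\lambda^m\lvert J\rvert$. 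This gives
\[
\lvert x_m-s\rvert=\lambda^m\,\bigl\lvert x_0-T_\theta^{-m}(s)\bigr\rvert,
\qquad\text{i.e.}\qquad
\frac{\lambda^m}{\lvert x_m-s\rvert}=\frac{1}{\lvert x_0-T_\theta^{-m}(s)\rvert}.
\]

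Second, I would show that there are infinitely many minimal preimages, with distance to $x_0$ tending to $0$. For a given $\varepsilon$, take the smallest $m$ such that $T_\theta^{-m}(s)\in\left]x_0,x_0+\varepsilon\right[$ (or the left side). Such an $m$ exists by Lemma \ref{lem:AdhK} as long as $x_0$ is not itself a preimage of $s$, which holds because its forward orbit is well defined. The first preimage to enter a one-sided neighbourhood is then automatically minimal on that side, since any earlier preimage would lie outside the arc. There is one subtlety: the closest preimage to $x_0$ among the first $m$ must be on the correct side. I would handle this by taking $m_\varepsilon$ to be the first time a preimage enters the $\varepsilon$-neighbourhood on either side, and then working on that side. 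As $\varepsilon\to0$, we get $m_\varepsilon\to\infty$, because there are only finitely many preimages of bounded order and none equals $x_0$. Moreover $\lvert x_0-T_\theta^{-m_\varepsilon}(s)\rvert<\varepsilon$.

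Third, setting $n_k=m_{\varepsilon_k}$ for $\varepsilon_k\to0$ yields $\lambda^{n_k}/\lvert x_{n_k}-s\rvert>1/\varepsilon_k\to+\infty$.

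The main obstacle is the affineness claim in the first step. The arc $J$ must not be cut during its iterates by the gap identification $0\equiv1$ on the circle, and it must not contain points $T_\theta^{-j}(s)$ for $j<m$. The minimality definition is designed to exclude exactly the interior preimages. For the circle cut, I would note that the only discontinuity of the circle map is at $s$, so a single-branch affine map results. One must also check that $x_0$ itself never hits $s$ (it has a well defined orbit), so the endpoint $x_j$ is never the singularity. Hence the contraction factor is exactly $\lambda^m$.
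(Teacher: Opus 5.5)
Your proposal is correct and follows the paper's proof: you use Lemma \ref{lem:AdhK} to get minimal preimages $T_\theta^{-n_k}(s)\to x_0$ and iterate the arc between them affinely, giving $\lvert x_{n_k}-s\rvert=\lambda^{n_k}\lvert x_0-T_\theta^{-n_k}(s)\rvert$. Your first-entrance-time construction is a welcome justification of the existence of minimal preimages, which the paper only asserts. One small correction: in the paper's normalized circle picture the map is continuous at $s$ and jumps at $0\equiv1=T_\theta(s)$, not the other way round; this is harmless, because you work on a short interval around $x_0$ inside $[0,1]$, where avoiding the points $T_\theta^{-j}(s)$, $j<m$, already makes $T_\theta^{\circ m}$ affine.
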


\begin{proof}
    Lemma \ref{lem:AdhK} guarantees the existence of a sequence $(T_{\theta}^{-n_{k}}(s))_{k\in\mathbf{N}^{*}}$ of minimal preimages for $x_{0}$ such that $\lvert T_{\theta}^{-n_{k}}(s)-x_{0}\rvert<1/k$. By definition, each interval $[T_{\theta}^{-n_{k}}(s),x_{0}]$ (or $[x_{0},T_{\theta}^{-n_{k}}(s)]$) can be iterated $n_{k}$ times without encountering the singularity. As a consequence, $\left\lvert x_{n_{k}}-s\right\rvert<\lambda^{n_{k}}/k$, and the conclusion follows.
\end{proof}

\begin{lemma}
\label{lem:Preim_non_min}
    Let $x_{0}\in K_{\theta}$ and $(T_{\theta}^{-n_{k}}(s))_{k\in\mathbf{N}}$ be a sequence of non-minimal preimages of $s$ for $x_{0}$. Then,
    \[
    \frac{\lambda^{n_{k}}}{\left\lvert x_{n_{k}}-s\right\rvert}\xrightarrow[k\rightarrow+\infty]{}0.
    \]
\end{lemma}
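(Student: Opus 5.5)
Throughout, $x_{0}\in K_{\theta}$ and $T_{\theta}^{-n_{k}}(s)$ is a preimage of $s$ that is \emph{not} minimal for $x_{0}$. By definition there is some $j<n_{k}$ such that $T_{\theta}^{-j}(s)$ lies in the open interval $J_{k}$ between $x_{0}$ and $T_{\theta}^{-n_{k}}(s)$; we may take the smallest such $j$. The plan is to compare the orbit of $x_{0}$ with the orbit of $s$ and show that $\lvert x_{n_{k}}-s\rvert$ is at least of order $\lambda^{n_{k}-m}$ with $m:=n_{k}-j\to\infty$. Since the ratio in question is then at most of order $\lambda^{m}$, the conclusion $\lambda^{n_{k}}/\lvert x_{n_{k}}-s\rvert\to0$ follows.

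\textbf{Step 1: reduce to a gap between $x_{j}$ and $T_{\theta}^{-m}(s)$.} We iterate $J_{k}$ exactly $j$ times. Because $j$ is the first index of an interior preimage, no earlier iterate of $J_{k}$ contains $s$. Hence $T_{\theta}^{\circ j}$ is affine on $J_{k}$ with ratio $\lambda^{j}$. Its image is the interval with endpoints $x_{j}$ and $T_{\theta}^{-m}(s)$, and this interval has $s$ in its interior. So $s$ splits this image into two pieces:
\begin{itemize}
\item $[x_{j},s]$, which contains the forward orbit of $x_{0}$;
\item $[s,T_{\theta}^{-m}(s)]$, an interval from $s$ to one of its own preimages of depth $m$.
\end{itemize}

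\textbf{Step 2: lower bound on $\lvert x_{n_{k}}-s\rvert$.} After these $j$ steps, $x_{n_{k}}=T_{\theta}^{\circ m}(x_{j})$ and $T_{\theta}^{\circ m}(T_{\theta}^{-m}(s))=s$. The two points $x_{j}$ and $T_{\theta}^{-m}(s)$ lie on opposite sides of $s$, so they are separated by the discontinuity. Applying $T_{\theta}$ once more sends the two sides of $s$ to opposite ends of $[0,1]$, and the gap $G$ of length $1-\lambda$ sits between them. Tracking this separation through the remaining $m-1$ iterates, the gap becomes $T_{\theta}^{\circ(m-1)}(G)$, of length $\lambda^{m-1}(1-\lambda)$. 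I would show that this gap lies between $x_{n_{k}}$ and $s$. This gives $\lvert x_{n_{k}}-s\rvert\geq\lambda^{m-1}(1-\lambda)$ as long as no further singularity intervenes. If further preimages of $s$ do intervene in the relevant intervals, the separating gaps only multiply and the lower bound still holds.

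\textbf{Step 3: conclude.} Steps 1 and 2 give
\[
\frac{\lambda^{n_{k}}}{\lvert x_{n_{k}}-s\rvert}\leq\frac{\lambda^{n_{k}}}{\lambda^{m-1}(1-\lambda)}=\frac{\lambda^{j+1}}{1-\lambda}.
\]
The exponent in the numerator is $j+1$, so the bound tends to $0$ exactly when $j\to\infty$ along the sequence. Here $j$ is the depth of the first interior preimage. Since $T_{\theta}^{-n_{k}}(s)\to x_{0}$, or after passing to a subsequence, the intervals $J_{k}$ shrink. The set of preimages of bounded depth $j$ is finite and (since $x_{0}\in K_{\theta}$ and the rotation number is irrational) does not contain $x_{0}$. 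Therefore these preimages eventually lie outside $J_{k}$, which forces $j\to\infty$.

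\textbf{Main obstacle.} The delicate part is the claim in Step 2 that a whole gap iterate separates $x_{n_{k}}$ from $s$; getting the orientation and order of points on the circle right is subtle. To settle it I would use the semiconjugacy to the irrational rotation: both $x_{n_{k}}$ and $s$ map to distinct points determined by their positions relative to the collapsed gaps. If the intervals $J_{k}$ do not shrink, the argument must be modified, possibly by taking $j$ to be the largest interior index rather than the smallest.
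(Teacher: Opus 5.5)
Your proposal has two genuine gaps, and both come from using the \emph{smallest} interior index $j$.

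\textbf{Step 2.} The gap you track does not, in general, end up between $x_{n_k}$ and $s$. At time $j+1$, consider the segment of $[0,1]$ joining $T_{\theta}^{-(m-1)}(s)$ and $x_{j+1}$ (the one containing $G$). It has length at least $1-\lambda$. On the circle it is the image of the arc complementary to $J_k$ (the one through $0\equiv1$), not of $J_k$; the image of $J_k$ passes through $T_{\theta}(s)=0$. This long segment is crossed by further preimages. Whenever $T_{\theta}^{-(n_k-1)}(s)\notin J_k$, it contains $T_{\theta}^{-(m-2)}(s)$. After the remaining $m-1$ steps its image then passes through $0$, and $T_{\theta}^{\circ(m-1)}(G)$ lands in the arc from $s$ to $x_{n_k}$ through $0$, i.e.\ \emph{outside} the segment of $[0,1]$ joining $x_{n_k}$ and $s$. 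So the claim you intended to prove is false in general, and ``the separating gaps only multiply'' does not address it. (The inequality $\lvert x_{n_k}-s\rvert\geq\lambda^{m-1}(1-\lambda)$ happens to hold, but for a different reason.)

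The gap that actually does the work is the one born from the \emph{last} interior preimage. Let $r_k<n_k$ be the largest index with $T_{\theta}^{-r_k}(s)$ in the arc. Its gap is the youngest one created along that arc, hence the largest. The image of the arc therefore contains $T_{\theta}^{\circ(n_k-r_k-2)}(G)$, of length $\lambda^{n_k-r_k-2}(1-\lambda)$. The paper does this for \emph{both} arcs between $x_0$ and $T_{\theta}^{-n_k}(s)$ (indices $r_k$ and $r_k'$), since the image of one of them may wrap through $0$. The case $r_k=n_k-1$ is handled by the fixed arc between $s$ and $T_{\theta}(s)$.

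\textbf{Step 3.} This step assumes $T_{\theta}^{-n_k}(s)\to x_0$. That is not a hypothesis, and passing to a subsequence only gives convergence to \emph{some} point. In Proposition~\ref{prop:Val_adh_rot_irrat} the lemma must control every non-minimal $n$, and most of these preimages are far from $x_0$. For them $J_k$ contains a fixed low-depth preimage, so $j$ stays bounded and $\lambda^{j+1}/(1-\lambda)$ does not tend to $0$.

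With the largest index, no shrinking is needed. Suppose $r_k=r$ along a subsequence. The arc from $x_0$ to $T_{\theta}^{-r}(s)$ lies inside the arc from $x_0$ to $T_{\theta}^{-n_k}(s)$. By Lemma~\ref{lem:AdhK} it contains infinitely many preimages of $s$, so for large $k$ one of them has depth strictly between $r$ and $n_k$. This contradicts maximality. Hence $\min(r_k,r_k')\to\infty$, and the ratio is at most $\lambda^{\min(r_k,r_k')+2}/(1-\lambda)\to0$.

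Your closing suggestion to take the largest interior index is exactly this repair. As written, however, the proposal does not establish the key estimate.
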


\begin{proof}
    For each $k$, let $r_{k}$ (respectively $r_{k}'$) be the largest integer that is smaller than $n_{k}$ and such that $T_{\theta}^{-r_{k}}(s)\in[x_{0},T_{\theta}^{-n_{k}}(s)]$ (respectively $T_{\theta}^{-r_{k}'}(s)\in[T_{\theta}^{-n_{k}}(s),x_{0}]$). Since the second iterate of the singularity is $G$, it is not difficult to see that, assuming $n_{k}\geq\max(r_{k},r_{k}')+2$, the intervals $[x_{n_{k}},s]$ and $[s,x_{n_{k}}]$ contain respectively a gap of size $\lambda^{n_{k}-(r_{k}+2)}(1-\lambda)$ and $\lambda^{n_{k}-(r_{k}'+2)}(1-\lambda)$ (see Figure \ref{fig:Preim_non_minim}). If $n_{k}\geq\max(r_{k},r_{k}')+2$, one of the intervals $[x_{n_{k}},s]$ and $[s,x_{n_{k}}]$ contains $[s,T_{\theta}(s)]$ or $[T_{\theta}(s),s]$. Thus, either $\lambda^{n_{k}}/\lvert x_{n_{k}}-s\rvert\leq\lambda^{\min(r_{k},r_{k}')+2}/(1-\lambda)$ or $\lambda^{n_{k}}/\lvert x_{n_{k}}-s\rvert\leq\max(\lambda^{\min(r_{k},r_{k}')+2}/(1-\lambda),\lambda^{n_{k}}/\lvert s,T_{\theta}(s)\rvert)$. In any case, it only remains to prove that $\min(r_{k},r_{k}')$ goes to zero when $k$ goes to infinity. So let's assume, by contradiction, that $\min(r_{k},r_{k}')=r$ for all $k$ large enough. Assume also, without loss of generality, that $r_{k}=r$ if $k$ is sufficiently large. This implies that $[x_{0},T_{\theta}^{-n_{k}}(s)]$ contains $T_{\theta}^{-r}(s)$ for all $k$ large enough. But the interval $[x_{0},T_{\theta}^{-r}(s)]$ contains infinitely many preimages of $s$, leading to a contradiction, since $r$ is supposed to be the largest integer such that $T_{\theta}^{-r}(s)\in[x,T_{\theta}^{-n_{k}}(s)]$ for $k$ sufficiently large.
\end{proof}

\begin{figure}[ht]
\centering
\includegraphics[scale=0.18]{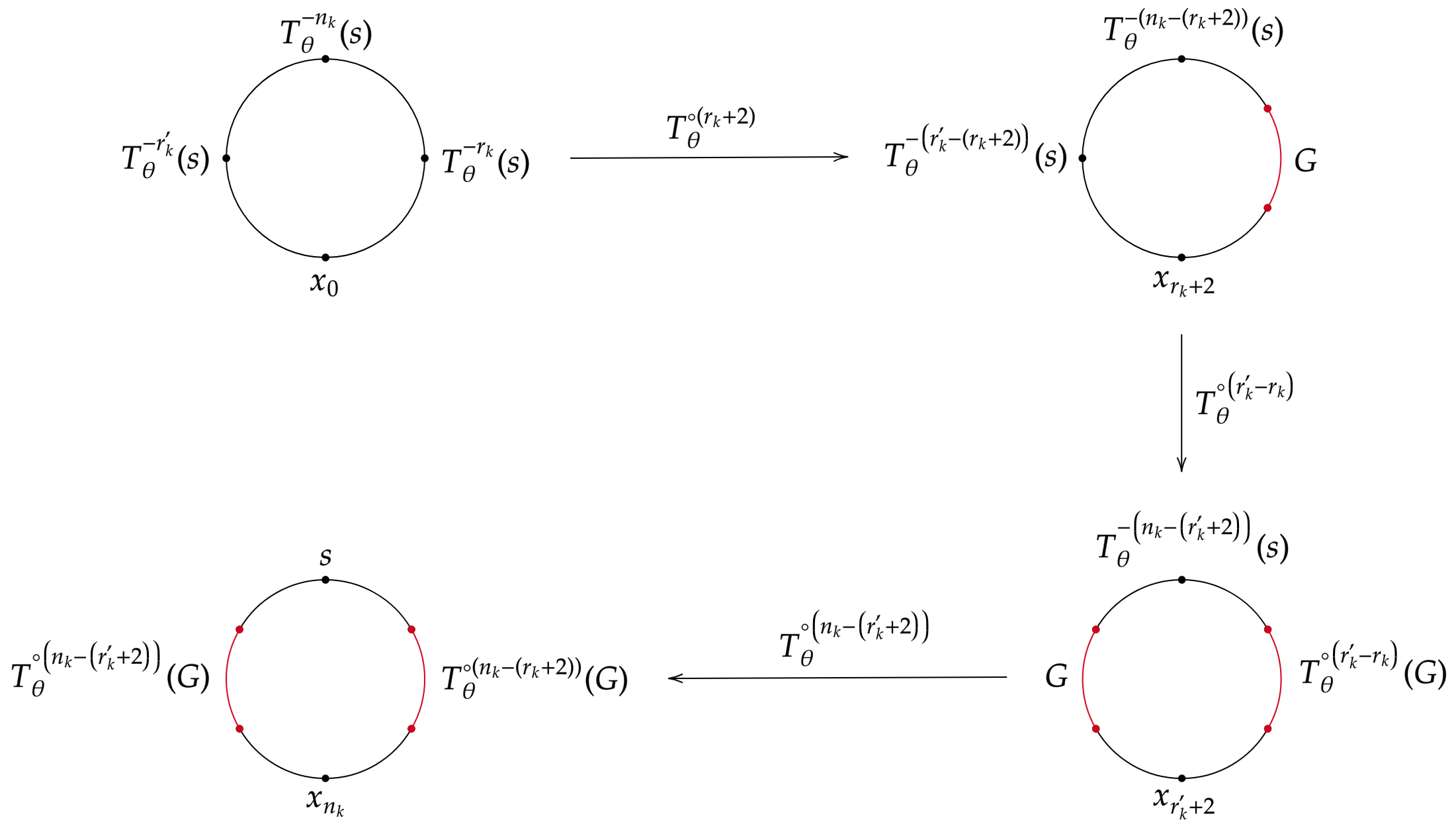}
\caption{\centering Schematic picture describing the $n_{k}$-th iterate of $T_{\theta}$ when $T_{\theta}^{-n_{k}}(s)$ is not minimal for $x_{0}$. We have assumed $r_{k}'>r_{k}+2$ and $n_{k}\geq r_{k}'+2$}
\label{fig:Preim_non_minim}
\end{figure}

\begin{proof}[Proof of Proposition \ref{prop:Val_adh_rot_irrat}] The first statement follows directly from Lemmas \ref{lem:exist_infty} and \ref{lem:Preim_non_min}. Now, let $x_{0}\in[a_{k},b_{k}]$ for some $k$, and let $(T_{\theta}^{-n_{j}}(s))_{j}$ be a sequence of preimages of $s$. Since any preimage of $s$ that is minimal for $a_{k}$ is also minimal for $b_{k}$, we can assume, up to extracting a subsequence, that all $T_{\theta}^{-n_{j}}(s)$ are either minimal or non-minimal for both $a_{k}$ and $b_{k}$. In the former case, $(T_{\theta}^{-n_{j}}(s))_{j}$ must converge to $a_{k}$ or to $b_{k}$. If the limit is $a_{k}$, then $x_{n_{j}}-s\sim_{j\rightarrow+\infty}x_{n_{j}}-a_{k+n_{j}}=\lambda^{n_{j}}(x_{0}-a_{k})$, and hence $1/(x_{0}-a_{k})\in\Lambda_{x_{0}}$. Similarly, if the limit is $b_{k}$, then $1/(x_{0}-b_{k})\in\Lambda_{x_{0}}$. Finally, if the $T_{\theta}^{-n_{j}}(s)$ are non-minimal, then $\lvert x_{n_{j}}-s\rvert\geq\min(\lvert a_{k+n_{j}}-s\rvert,\lvert b_{k+n_{j}}-s\rvert)=\lambda^{n_{j}}(x_{0}-a_{k})$. Therefore, by Lemma \ref{lem:Preim_non_min}, $\lambda^{n_{j}}/\lvert x_{n_{j}}-s\rvert\leq\lambda^{n_{j}}/\min(\lvert a_{k+n_{j}}-s\rvert,\lvert b_{k+n_{j}}-s\rvert)\longrightarrow_{j\rightarrow+\infty}0$.
\end{proof}

Now Theorem \ref{thm:main} follows easily. Indeed, to any point $\left(x,y\right)\in\mathbf{C}^{2}\setminus\mathcal{C}$ corresponds a unique trajectory $\gamma$ for $\mathbf{v}$, hence a unique geodesic $\delta$ in $\mathcal{S}$ and a well-defined rotation number $\text{rot}$. This relationship allows us to lift $\text{rot}$ to a continuous function $\text{Rot}$ defined on $\mathbf{C}^{2}\setminus\mathcal{C}$ and constant along the real-time trajectories of $\mathbf{v}$. The dynamics of the trajectories depending on their rotation number derives from Corollary \ref{cor:cercle_limite} and Propositions \ref{prop:Val_adh_rot_rat} and \ref{prop:Val_adh_rot_irrat}. 

Let $\mathcal{U}:=\text{int}\left(\text{Rot}^{-1}\left(\mathbf{Q}\right)\right)$. We have already argued that $\mathcal{U}$ has full Lebesgue measure. For any $p/q\in\mathbf{Q}\text{ mod }1$, the preimage $\text{rot}^{-1}\left(p/q\right)$ is a closed and non-trivial interval of angles. Since the rotation number only depends on the initial angle, $\text{Rot}^{-1}\left(p/q\right)$ forms a connected set with non-empty interior in $\mathbf{C}^{2}$. Moreover, because the function $\text{Rot}$ is continuous, two different rational values define two distinct connected components of $\mathcal{U}$.

For any rational number $p/q\in\mathbf{Q}\text{ mod }1$, the boundary $\partial\text{rot}^{-1}\left(p/q\right)\cap\left[0,\pi/4\right]$ consists of two points, which implies that $\partial\text{Rot}^{-1}\left(p/q\right)$ is a union of two three-dimensional real manifolds.
    
Similarly, for any $t\in\mathbf{R}\setminus\mathbf{Q}\text{ mod }1$, $\text{rot}^{-1}\left(t\right)\cap\left[0,\pi/4\right]$ consists of a single point, leading to $\text{Rot}^{-1}\left(t\right)$ being a three-dimensional real manifold. For the same reasons as before, the $\text{Rot}^{-1}\left(t\right)$ are exactly the connected components of $\text{Rot}^{-1}\left(\mathbf{R\setminus\mathbf{Q}}\right)$.

\newpage

\bibliographystyle{alpha} % We choose the "plain" reference style
\bibliography{refs}

\end{document}